\documentclass[onefignum,onetabnum]{siamonline250211}

\usepackage{amsmath, bbm, bm}
\usepackage{lipsum}
\usepackage{amsfonts}
\usepackage{graphicx}
\usepackage{epstopdf}
\usepackage{mathtools}
\usepackage{amssymb}
\usepackage{accents}
\usepackage{algorithm}
\usepackage{algpseudocode}
\usepackage{tabto}
\usepackage{aliascnt}
\usepackage{cleveref}
\usepackage[dvipsnames]{xcolor}
\usepackage[textsize=tiny]{todonotes}

\ifpdf
  \DeclareGraphicsExtensions{.eps,.pdf,.png,.jpg}
\else
  \DeclareGraphicsExtensions{.eps}
\fi

\usepackage{enumitem}
\setlist[enumerate]{leftmargin=.5in}
\setlist[itemize]{leftmargin=.5in}

\newcommand{\la}{\langle}
\newcommand{\ra}{\rangle}
\newcommand{\half}{\frac{1}{2}}

\makeatletter
\newcommand{\uprighttheorem}[1]{%
  \expandafter\let\csname old#1\expandafter\endcsname\csname #1\endcsname
  \expandafter\let\csname endold#1\expandafter\endcsname\csname end#1\endcsname
  \expandafter\renewenvironment\expandafter{#1}%
    {\csname old#1\endcsname\normalfont}%
    {\csname endold#1\endcsname}%
}
\makeatother

\newaliascnt{remark}{theorem}
\newtheorem{remark}[remark]{Remark}
\aliascntresetthe{remark}
\uprighttheorem{remark}

\newaliascnt{condition}{theorem}
\newtheorem{condition}[condition]{Condition}
\aliascntresetthe{condition}

\newaliascnt{fact}{theorem}
\newtheorem{fact}[fact]{Fact}
\aliascntresetthe{fact}
\uprighttheorem{fact}

\crefname{theorem}{Theorem}{Theorems}
\crefname{hypothesis}{Hypothesis}{Hypotheses}
\crefname{remark}{Remark}{Remarks}
\Crefname{remark}{Remark}{Remarks}
\crefname{condition}{Condition}{Conditions}
\Crefname{condition}{Condition}{Conditions}
\crefname{fact}{Fact}{Facts}
\Crefname{subsection}{Section}{Sections}

\headers{Derivative-informed dimension reduction of SDEs}{J. Zou, H. C. Lie, Y. Marzouk}

\title{Derivative-Informed Dimension Reduction of Stochastic Differential Equations: A Path-Space Sensitivity Approach}

\author{Joanna Zou\thanks{Center for Computational Science \& Engineering, Massachusetts Institute of Technology, Cambridge, MA 
  (\email{jjzou@mit.edu}).}
\and Han Cheng Lie\thanks{Department of Mathematics, University of Potsdam, Potsdam, Germany 
  (\email{han.lie@uni-potsdam.de}).}
\and Youssef Marzouk\thanks{Center for Computational Science \& Engineering, Massachusetts Institute of Technology, Cambridge, MA 
  (\email{ymarz@mit.edu}).}}

\usepackage{amsopn}

\newcommand{\upd}{\textup{d}}

\definecolor{english}{rgb}{0.0, 0.5, 0.0}

\ifpdf
\hypersetup{
  pdftitle={Derivative-informed dimension reduction of SDEs},
  pdfauthor={J. Zou, H. C. Lie, Y. Marzouk}
}
\fi

\begin{document}

\maketitle

\begin{abstract}
Uncertainty in the drift of a stochastic differential equation (SDE) induces uncertainty in statistical observables defined as expected functionals on path space. We propose a derivative-informed method for identifying low-dimensional structure in the drift-to-observable map, with the aim of improving the tractability of uncertainty quantification. This approach can be viewed as a generalization of the active subspace method to infinite-dimensional input spaces and to path-space observables of SDEs. Representing uncertainty in the drift by random perturbations in
a Hilbert space, we construct a path-space sensitivity operator using first-order Fréchet derivatives of the observable with respect to the perturbation. We establish conditions under which the leading eigenfunctions of this operator correspond to directions of maximum mean squared sensitivity of the observable, thus enabling low-dimensional approximation with certified error bounds. We further introduce a martingale control variate of the Fr\'echet derivative that reduces error in sample-based estimators of the operator. Using a Galerkin scheme for numerical discretization, we show that reduced representations of the drift from the derivative-informed basis yield more accurate approximations of mean first hitting times of SDEs than the Karhunen--Loève expansion of the same truncation order. 
\end{abstract}

\begin{keywords}
Forward uncertainty quantification, surrogate operators, parameter-to-observable maps, active subspaces, stochastic dynamics, path-space statistics, mean first hitting times
\end{keywords}

\begin{MSCcodes}
65C30, 60H10, 60B11
\end{MSCcodes}

\section{Introduction}
\label{sec:intro}

Stochastic differential equations (SDEs) are used to represent stochastic processes in a wide array of domains, including physics and biology, systems engineering, mathematical finance, sampling, and diffusion-based generative modeling. In many of these settings, the drift function of the SDE is not known \textit{a priori} and must be inferred from a set of observations. Data-driven models of the drift function appear in the form of machine learning force fields for molecular dynamics \cite{Batzner2022,Musaelian2023}, surrogates of model discrepancy in control systems \cite{Psichogios1992,Johannink2019}, and neural network estimates of the score of an intractable probability distribution in diffusion-based sampling \cite{Song2021}. In each of these examples, models of the drift function are inferred from finite data and therefore subject to statistical error, which propagates to uncertainty in statistical properties of the modelled process. 

Both the tasks of model inference and uncertainty quantification of data-driven models of SDEs suffer from the curse of dimensionality: it is more difficult to infer an accurate model of the drift function and quantify the impact of model uncertainty when its parameters are high- or infinite-dimensional. To pose an example, in forward uncertainty quantification (UQ), one aims to characterize how uncertainty in the input drift function -- represented by a probability distribution of parameters of the drift function, $\mu$ -- influences uncertainty in an output observable quantity $F$ which depends nonlinearly on the drift function. This task amounts to solving for the pushforward distribution of the observable, $F_\sharp \mu$, which is typically not available in closed form and must be characterized by Monte Carlo simulation. Rather than incur the expense of simulating the SDE for every Monte Carlo trial, a common approach to reduce the cost of forward UQ is to learn a surrogate operator which efficiently approximates the pushforward map \cite{Sudret2017,Huang2025}. However, such operators are increasingly challenging to learn for input parameters in high dimensions.

Dimension reduction techniques are often employed to improve the tractability of forward UQ, sensitivity analysis, and model learning. In the literature on dimension reduction for parameters of SDEs, the drift function is generally assigned a specific functional form -- for instance, reduced-rank Gaussian processes \cite{Hostettler2018} or a low-rank tensor decomposition of the value function in stochastic optimal control \cite{Gorodetsky2015,Richter2021} -- which admits reduced-dimensional representations. However, these strategies reduce dimension by committing to a specific model class, which precludes the use of preexisting or alternative functional forms such as neural networks. Moreover, since the basis constructions are agnostic to variation in observable functionals of the solution of the SDE, such as path statistics or invariant statistics, truncation of the basis expansion form of the drift may lead to a loss of accuracy in the corresponding observable. 


In this paper, we propose a technique for constructing a reduced representation of the drift function in terms of a data-driven, observable-adapted basis which is suitable for forward UQ and sensitivity analysis of the observable of the SDE. Our approach corresponds to solving for the \textit{derivative-informed subspace} of parameters of the drift function from the spectral decomposition of a \textit{path-space sensitivity operator}, constructed from data on the first-order derivative of the observable with respect to the parameters. The derivative-informed subspace is also referred to as the active subspace in literature, which was first popularized by \cite{Constantine2020} and then generalized to vector-valued observables in \cite{Zahm2020} and to infinite-dimensional parameters and observables in \cite{Luo2025}. In the setting where the parameter is infinite-dimensional -- the drift function itself is the parameter -- the subspace approximation of the drift is a truncated expansion on a complete basis of a chosen function space. A key feature of derivative-informed subspaces is that under appropriate conditions, one can leverage a subspace Poincar\'e inequality to derive an upper bound to error in the subspace approximation of the observable.

We focus the presentation of our method on its application to \textit{path-space observables} of SDEs. These are quantities expressed as expectations with respect to the law of the SDE, which is a probability measure on the space of continuous paths. This type of observable appears in many forms, such as committor functions or mean first transition times which relate to reaction rates in molecular dynamics \cite{Lelievre2010,Zou2026,Du2026} and the cost functional in stochastic optimal control \cite{Hartmann2012,Lie2021,Nusken2021}. 
Our method is similar to Koopman operator approaches to dimension reduction in that both seek representations of the SDE coefficients that are adapted to a chosen class of observables. The Koopman operator characterizes the evolution of observables dependent on the process at deterministic times, such as one-point statistics or two-point correlation functions \cite{Zhang2021}, and can be represented with reduced-dimensional Galerkin or extended dynamic mode decomposition approximations which preserve the selected fixed-time statistics \cite{Williams2015,Klus2018,Klus2020,Peitz2019}. In contrast, the path-space observables considered here are defined expectations over path measures, where the length of paths, determined by the stopping time, is a random variable. In general, such quantities cannot be expressed as the action of a fixed-time Koopman operator on a state-space observable. Derivative-informed subspaces are a more general approach to dimension reduction in the sense that they can tailor the reduced subspace of drift functions to not only one-point or two-point statistics, but also path-dependent statistics defined by non-deterministic stopping times.

It is important to distinguish our approach, as well as those reviewed above, as techniques for \textit{parameter space reduction}, in which the drift function with reduced rank or fewer degrees of freedom continues to operate on full-dimensional state space. These methods are distinct from those for model order reduction (MOR) \cite{Mezic2005} such as the recursive polynomial chaos (RPC) method \cite{Bal2026}, in which the state dimension is reduced and one constructs Galerkin projections or surrogate models of the SDE coefficients on the low-dimensional domain. Our approach to drift approximation is agnostic to the dimension of the state space and is even amenable to approximating drift functionals on infinite-dimensional Hilbert spaces; therefore, it can be used in conjunction with MOR techniques for SDEs, although this direction is not explored further in this paper.  

\subsection{Summary of contributions}

We define a derivative-informed basis of Hilbert spaces of drift functions informed by the sensitivity of a particular path-space observable of the SDE. We develop sufficient conditions under which it forms an orthonormal basis of the Cameron-Martin space associated with the Gaussian measure on the Hilbert space, thus enabling finite-dimensional approximations via basis truncation. We formulate the derivative-informed subspace in the most general setting of infinite-dimensional spaces of drift functions, such that our results can readily extend to the setting of finite-dimensional parameterization of the drift function. Our contributions can be summarized as follows: 

\begin{itemize}
    \item In \Cref{lem:controlspace}, we specify that the drift perturbations must belong to a Hilbert space of bounded functions in order for the SDE to have a unique weak solution. A choice of Hilbert space meeting this condition is a Sobolev space which is continuously embedded in a H\"older space of continuous, bounded functions. On this space, one can define Gaussian measures with covariance given by a fractional Laplacian operator with particular requirements on the smoothness parameter according to \Cref{fact:measure}, so that the Cameron-Martin space is continuously embedded in the Sobolev space. 
    \item In \Cref{prop:condf}, we discuss sufficient conditions on the path-space observable for it to meet \Cref{cond:f}. Under these conditions, the eigenfunctions comprising the derivative-informed basis form a complete orthonormal basis of the Cameron-Martin space (\Cref{lem:traceclass}) and the eigenvalues quantify the sensitivity of the observable along each eigendirection (\Cref{lem:directderiv}). As a result, one can construct an analytical optimal approximation of the path-space observable with respect to the derivative-informed subspace (\Cref{prop:optapprox}) and a certified error bound on the approximation (\Cref{prop:di}). 
    \item In \Cref{sec:discretization}, we introduce discrete representations of the infinite-dimensional operators associated with derivative-informed subspaces. We do so by constructing Galerkin projections of these operators onto a finite element subspace of the Sobolev space. We discuss the practical computation of the derivative-informed subspace using Monte Carlo estimation of the Fr\'echet derivative and the associated sensitivity operator. 
    \item In \Cref{sec:controlvariate}, we propose a control variate formulation of the Fr\'echet derivative of the path-space observable which leads to reduced-variance estimates of the sensitivity operator. In particular, we use the local martingale appearing in the Fr\'echet derivative formula in \Cref{lem:frechet} as the control variate, which offers the advantage that its first moment is known and it is available at no additional computational cost. 
\end{itemize}

\medskip 

An outline of the paper is as follows. \Cref{sec:background} introduces the necessary background for our work, including the definition of the SDE and path-space observables considered (\Cref{sec:sde}), the formulation of Gaussian measures on Hilbert spaces (\Cref{sec:hilbert}), and approaches to dimension reduction on Hilbert spaces (\Cref{sec:dimred}). \Cref{sec:contribution} discusses our main results as summarized above. \Cref{sec:numerics} presents numerical experiments performed using FEniCS \cite{Alnaes2015} and hIPPYlib \cite{Villa2021} Python libraries to estimate the pushforward distribution of mean first hitting times of SDEs arising from Gaussian measures on drift perturbations. 

 
\section{Background}
\label{sec:background}

\subsection{Perturbed stochastic differential equations}
\label{sec:sde}

Consider a dynamical system whose state $X^u(\omega)=(X^u_t(\omega))_{t \geq 0}$ is a solution to an It$\hat{\text{o}}$ stochastic differential equation (SDE) on some filtered probability space $(\Omega, \mathcal{F}, (\mathcal{F}_t )_{t \geq 0}, P)$,
    \begin{equation}
        \label{eq:sde}
        \upd X^u_t = (b + \sigma u)(X^u_t) \, \upd t + \sigma(X^u_t) \upd W_t, \qquad X^u_0 = x \ ,
    \end{equation}
given an initial condition $x \in \mathbb{R}^n$ and standard Brownian motion $W:\Omega \to C([0,T], \mathbb{R}^n)$ on $(\mathcal{F}_t )_{t \geq 0}$. The solution $X^u: \Omega \to C([0,T], \mathbb{R}^n)$ is a Markov process on $\mathbb{R}^n$ over the time interval $[0,T]$. We assume the following conditions on the drift function $b$ and diffusion coefficient $\sigma$. 

\begin{condition}[Conditions on the SDE coefficients]
    \label{cond:driftdiff}
    $b: \mathbb{R}^n \to \mathbb{R}^n$ and $\sigma: \mathbb{R}^n \to \mathbb{R}^{n \times n}$ are time-invariant and satisfy conditions for $\upd X_t = b(X_t) \upd t + \sigma(X_t) \upd W_t, X_0 = x$ to have a unique $(\mathcal{F}_t )_{t \geq 0}$-adapted strong solution $(X_t)_{t \geq 0}$. Moreover, there exists a constant $c_0 > 0$ 
    such that for all $x,y \in \mathbb{R}^n$,
        \[
        0 \leq x^\top \big( \sigma(y) \sigma^\top(y)\big)^{+} x \leq c_0^{-2} \|x\|^2 \ .
        \]
\end{condition}
%

We utilize \eqref{eq:sde} as a form of SDE in which uncertainty in the drift $b$ is modeled by an additive perturbation field $u: \mathbb{R}^n \to \mathbb{R}^n$ scaled by $\sigma$.
Given any $u$ for which $X^u$ is unique in law, let $\mathbb{E}^u[\cdot] = \mathbb{E}_{\mathbb{P}^u}[\cdot]$ denote the expectation operator with respect to $\mathbb{P}^u \coloneqq P \circ (X^u)^{-1}$. In the case of $u=0$, the SDE in \eqref{eq:sde} has a unique strong solution, denoted $X^0$ with law $\mathbb{P}^0 \coloneqq P \circ (X^0)^{-1}$, as a result of the linear growth and Lipschitz continuity assumptions of \cref{cond:driftdiff} \cite[Thm. 5.2.1]{Oksendal2013}. We designate $\mathbb{U}$ to be a set of admissible perturbation fields and defer further specification of its properties to \Cref{sec:hilbertdrift}.

\subsubsection{Path space observables of SDEs}
\label{sec:observable}

Consider a \textit{path-space observable} $F: \mathbb{U} \to \mathbb{R}$ to be a functional of the drift perturbation, defined as the expectation of a path functional $\varphi_\tau : C([0,\infty),\mathbb{R}^n) \to \mathcal{Y}$ with respect to the law of the perturbed SDE, e.g.,
    \begin{equation}
        \label{eq:observable}
        F(u) \coloneqq \mathbb{E}^u[\varphi_\tau] \ 
    \end{equation}
for some stopping time $\tau > 0$. $F$ may also be understood as a particular choice of parameter-to-observable map \cite{Huang2025}, where the parameter of the SDE model is the drift perturbation $u$.

A canonical example of a path-space observable is the mean first hitting time (MFHT) to an open set $D \subset \mathbb{R}^n$ with positive Lebesgue measure, which we refer to as the hitting domain. For a stochastic process $Y = (Y_t)_{t \geq 0}$, the first hitting time is the map  $\tau_\infty: C([0,\infty), \mathbb{R}^n) \to [0,\infty)$ defined as
    \begin{equation}
    \label{eq:stopping}
        \tau_\infty(Y) \coloneqq \inf \{ t \geq 0: Y_t \in D \} \ .
    \end{equation}
with the convention $\inf \emptyset = \infty$. In general, $\tau_\infty$ is an unbounded random variable. However, for ease of exposition, we assume throughout that the path-space observable is defined by a bounded random stopping time as follows.

\begin{condition}
    \label{cond:stoptime}
    The path-space observable $F$ in \eqref{eq:observable} is defined by the functional $\varphi_{\tau}:C([0,T],\mathbb{R}^n) \to \mathcal{Y}$, where $\tau \coloneqq \tau_\infty(Y) \wedge T$ is the truncated stopping time given some fixed maximum time $T > 0$.
\end{condition}
Although boundedness of the stopping time is a stronger assumption than what is required for the results that follow, it is imposed here as a convenient sufficient condition. This truncation is natural in practice, since numerical simulations are run to a finite time $T$ determined by a fixed computational budget.

\subsection{Gaussian measures on Hilbert spaces}
\label{sec:hilbert}

We introduce the construction of Gaussian measures on infinite-dimensional Hilbert spaces. A measure $\mu$ on a Hilbert space $\mathbb{U}$ is Gaussian if for any continuous linear functional $\ell:\mathbb{U} \to \mathbb{R}$, the measure $\ell_\sharp\mu$ is Gaussian \cite[Def. 2.2.1]{Bogachev1998}. In other words, an element $u \in \mathbb{U}$ is a Gaussian process with law $\mu = \mathcal{N}(m_0,\mathcal{C})$ if $\ell(u) \sim \mathcal{N}(m_\ell, \sigma^2_\ell)$, where $m_\ell=\ell(m_0)$ and $\sigma^2_\ell = \ell(\mathcal{C} \ell)$ \cite{Stuart2010}. The series-based construction of the Gaussian measure is discussed further in \Cref{rmk:kl}. 

The \textit{Cameron-Martin space} $\mathcal{E} \subseteq \mathbb{U}$ associated with the covariance operator $\mathcal{C}$ is a Hilbert space of all $\mathcal{C}^{\half}$-transformations of elements of $\mathbb{U}$, e.g.,
\[
\mathcal{E} = \text{Im}(\mathcal{C}^{\half}) = \{\mathcal{C}^{\half} h : h \in \mathbb{U}\} \, ,
\]
equipped with the inner product $\la w, w' \ra_\mathcal{E} \coloneqq \la \mathcal{C}^{-\half} w, \mathcal{C}^{-\half} w' \ra_\mathbb{U}$ for $w, w' \in \mathcal{E}$ \cite[Lemma 6.10]{Stuart2010}. Equivalent definitions of $\mathcal{E}$ are in \cite[pg. 44]{Bogachev1998} and \cite[Thm. 6.11]{Stuart2010}. One can view $\mathcal{C}^{\half}: \mathbb{U} \to \mathcal{E}$ as an isometric isomorphism onto $\mathcal{E}$, with the inverse operator $\mathcal{C}^{-\half}: \mathcal{E} \to \mathbb{U}$. When the covariance $\mathcal{C}: \mathbb{U} \to \mathbb{U}$ is strictly positive definite, then the precision operator $\mathcal{C}^{-1}: \text{Im}(\mathcal{C}) \to \mathbb{U}$ is well-defined.
Note that $\mathcal{C}^{-1}$ is an unbounded operator with domain $\text{Im}(\mathcal{C}) \subset \mathcal{E}$, since for any $h \in \mathbb{U}$, $\mathcal{C}h = \mathcal{C}^\half (\mathcal{C}^\half h) \in \text{Im}(\mathcal{C}^\half) = \mathcal{E}$.

\begin{remark}
    Since $\mu(\mathcal{E}) = 0$ \cite[Thm. 2.4.7]{Bogachev1998}, an element $u \in \mathbb{U}$ is almost surely not in $\mathcal{E}$.
    Since $\mathcal{E}$ is continuously embedded in $\mathbb{U}$, an operator defined on $\mathbb{U}$ is also defined on $\mathcal{E}$. In contrast, an operator defined on $\mathcal{E}$ need not extend to all of $\mathbb{U}$.
\end{remark}

An example of a Gaussian measure that we use is $\mu = \mathcal{N}(0,\mathcal{A}^{-\nu})$, where the covariance is given by a fractional Laplacian operator on a bounded open set $\mathcal{X} \subset \mathbb{R}^n$ with $\mathcal{A} \coloneqq -\Delta$ and $\nu > 0$. More generally, for any Laplacian-like operator $\mathcal{B}$ on $\mathbb{U} \subset L^2(\mathcal{X})$ satisfying the assumptions in  \cite[Assump.~2.9]{Stuart2010}, $\mathcal{B}$ has the eigenpairs $\{(\lambda_k, \psi_k)\}_{k \geq 0}$ where $\{\psi_k\}_{k \geq 0}$ is an orthonormal basis of $\mathbb{U}$ and $\lambda_k \asymp |k|^2$. For $\nu \in \mathbb{R}$, the fractional power $\mathcal{B}^\nu$ can be defined spectrally as
\begin{equation}
\label{eq:Bnu}
\mathcal{B}^\nu u = \sum_{k \geq 0} \lambda_k^\nu \, \langle u, \psi_k \rangle_{\mathbb{U}} \, \psi_k \, .
\end{equation}
This expansion also enables the definition of the \emph{Hilbert scale} generated by $\mathcal{B}$. For $s \in \mathbb{R}$,
\begin{equation}
\label{eq:hilbertscale}
\mathbb{U}^s \coloneqq \Bigl\{ u : \|u\|_{\mathbb{U}^s}^2 \coloneqq \sum_{k \geq 0} \lambda_k^s \, |\langle u, \psi_k \rangle_{\mathbb{U}}|^2 < \infty \Bigr\} 
\end{equation}
is a separable Hilbert space satisfying $\mathbb{U}^s \subseteq \mathbb{U}$ for $s \geq 0$.
    


\subsection{Dimension reduction on Hilbert spaces}
\label{sec:dimred}

The series expansion form of a stochastic process lends itself to dimension reduction via basis truncation. In the following, we review two spectral methods to dimension reduction of stochastic processes, the Karhunen-Lo\`eve expansion and derivative-informed subspace.

\subsubsection{Karhunen-Lo\`eve expansion}
\label{sec:kl}

A well-known approach to the dimension reduction of second-order stochastic processes utilizes the Karhunen-Lo\`eve (KL) expansion, the infinite-dimensional analogue of principal component analysis (PCA) for vector-valued parameters. The KL expansion expresses a stochastic process $u \in \mathbb{U}$ as a basis expansion on a particular orthonormal basis of $\mathbb{U}$, given by the spectral decomposition of the covariance operator associated with the stochastic process. 

\begin{definition}[Karhunen-Lo\`eve (KL) basis]
\label{def:klbasis}
The Karhunen-Lo\`eve (KL) basis $\{\psi^\textup{KL}_i\}_{i \geq 0}$ is the set of eigenfunctions of $\mathcal{C}$, solving
    \begin{equation}
        \label{eq:eig_pca}
        \mathcal{C} \psi^{\textup{KL}}_i = \lambda^{\textup{KL}}_i \psi^{\textup{KL}}_i.
    \end{equation}
\end{definition}
Since the covariance operator $\mathcal{C}$ is self-adjoint, trace-class \cite[Thm. 6.6]{Stuart2010}, and is assumed to be strictly positive definite, the KL basis is an orthonormal basis in $\mathbb{U}$ and the eigenvalues satisfy $\lambda^{\textup{KL}}_0 \geq \lambda^{\textup{KL}}_1 \geq \dots > 0$ with $\lim_{i \to \infty} \lambda_i^{\textup{KL}} = 0$.

The truncated KL expansion of $u$ corresponds to its projection onto the first $r$ eigenfunctions of $\mathcal{C}$. Define the projection operator $P^{\textup{KL}}_r: \mathbb{U} \to \mathbb{U}$ as 
\[
P^{\textup{KL}}_r \coloneqq \sum_{i=0}^r \psi^{\textup{KL}}_i \otimes \psi^{\textup{KL}}_i \ ,
\]
where $\psi \otimes \psi \coloneqq \psi \la \psi, \cdot \ra_\mathbb{U} $. Then the reduced KL approximation of any $u \in \mathbb{U}$ is given by 
\begin{equation}
    \label{eq:proj_pca}
    u^\textup{KL}_r = m_0 + P^\textup{KL}_r (u - m_0) = m_0 + \sum_{i=0}^r \psi_i^{\textup{KL}} \la \psi_i^{\textup{KL}}, u - m_0 \ra_\mathbb{U}  . 
\end{equation}

\begin{remark}
\label{rmk:kl}
    The truncated KL expansion is commonly expressed in terms of a random variable $\xi_i$  \cite{Wang2008}, defined as $\xi_i(\omega) \coloneqq \frac{1}{\sqrt{\lambda_i^{\textup{KL}}}} \la \psi_i^{\textup{KL}}, u(\omega) - m_0 \ra_\mathbb{U}$ for a given realization $\omega$ in the sample space $\Omega$. Note that $\mathbb{E}[\xi_i] = 0$ and $\mathbb{E}[\xi_i \xi_j] = \delta_{ij}$ due to orthonormality of the eigenfunctions. Then, \eqref{eq:proj_pca} can be equivalently written as 
    \[
    u^\textup{KL}_r = m_0 + \sum_{i=0}^r \sqrt{\lambda_i^{\textup{KL}}} \psi_i^{\textup{KL}} \xi_i. 
    \]
\end{remark}

Among all rank-$r$ orthogonal projectors in $\mathbb{U}$, denoted $\mathcal{Q}_r$, the projector $P^\textup{KL}_r$ yields the $L^2(\mu)$-optimal reconstruction of $u \sim \mu$, where the reconstruction error is equal to the sum of trailing eigenvalues of $\mathcal{C}$. We restate \cite[Prop. 22]{Luo2025} below. 

\begin{proposition}[$L^2(\mu)$-optimal approximation of the perturbation field.]
    \label{prop:kl}
    Let $u_{Q_r} = m_0 + Q_r (u - m_0)$ for $u \in \mathbb{U}$ and $Q_r \in \mathcal{Q}_r$. Then the approximation \eqref{eq:proj_pca} using $P^\textup{KL}_r \in \mathcal{Q}_r$ satisfies
    \[
    \label{eq:approx_pca}
    \mathbb{E}_{u \sim \mu} \Big[ \left\| u - u_r^\textup{KL} \right\|^2_\mathbb{U} \Big] = \underset{Q_r \in \mathcal{Q}_r}{\textup{min}} \mathbb{E}_{u \sim \mu} \Big[ \left\| u - u_{Q_r} \right\|^2_\mathbb{U} \Big] = \sum_{i=r+1}^\infty \lambda_i^{\textup{KL}} \ .
    \]
\end{proposition}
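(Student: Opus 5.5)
We prove the statement by reducing the mean squared reconstruction error of any rank-$r$ projector to a trace identity involving the covariance operator, and then maximizing that trace over rank-$r$ projectors. We set $v \coloneqq u - m_0$, so that $v \sim \mathcal{N}(0,\mathcal{C})$, and fix $Q_r \in \mathcal{Q}_r$. Since $u - u_{Q_r} = (I - Q_r)v$ and $I-Q_r$ is an orthogonal projector, we get $\|u-u_{Q_r}\|_\mathbb{U}^2 = \|v\|_\mathbb{U}^2 - \|Q_r v\|_\mathbb{U}^2$. We choose an orthonormal basis $\{q_j\}_{j=0}^{r}$ of $\mathrm{Im}(Q_r)$, which has $r+1$ elements to match the indexing of $P^{\textup{KL}}_r$, which sums from $i=0$ to $r$. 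Then $\|Q_r v\|^2 = \sum_j \la q_j, v\ra_\mathbb{U}^2$. Using the defining property $\mathbb{E}[\la h,v\ra \la h',v\ra] = \la h, \mathcal{C}h'\ra_\mathbb{U}$ of the covariance, together with the fact that $\mathcal{C}$ is trace class (so $\mathbb{E}\|v\|^2 = \mathrm{tr}\,\mathcal{C} = \sum_i \lambda_i^{\textup{KL}} < \infty$, which permits exchanging sum and expectation by monotone convergence), we obtain
\[
\mathbb{E}_{u\sim\mu}\big[\|u-u_{Q_r}\|_\mathbb{U}^2\big] = \sum_{i\ge 0}\lambda_i^{\textup{KL}} - \sum_{j=0}^{r} \la q_j, \mathcal{C} q_j\ra_\mathbb{U} = \mathrm{tr}\,\mathcal{C} - \mathrm{tr}(Q_r \mathcal{C} Q_r).
\]

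Minimizing the error is therefore equivalent to maximizing $\mathrm{tr}(Q_r\mathcal{C}Q_r)$ over rank-$(r+1)$ orthogonal projectors. This is the Ky Fan maximum principle for a compact self-adjoint nonnegative operator, and proving it is the main step. We expand each $q_j = \sum_i \la q_j,\psi_i^{\textup{KL}}\ra \psi_i^{\textup{KL}}$ and write $w_i \coloneqq \sum_{j} \la q_j,\psi_i^{\textup{KL}}\ra^2 = \|Q_r\psi_i^{\textup{KL}}\|^2$. This gives $\mathrm{tr}(Q_r\mathcal{C}Q_r) = \sum_i \lambda_i^{\textup{KL}} w_i$, where $0\le w_i\le 1$ and $\sum_i w_i = r+1$. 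Since the eigenvalues are ordered decreasingly, this linear functional is maximized over such weights by putting $w_i=1$ for $i\le r$ and $w_i=0$ otherwise. A simple exchange argument suffices: we compare $\sum_i \lambda_i w_i - \sum_{i\le r}\lambda_i = \sum_{i\le r}\lambda_i(w_i-1) + \sum_{i>r}\lambda_i w_i$ and bound it by $\lambda_r\big(\sum_{i\le r}(w_i-1)+\sum_{i>r}w_i\big) = 0$. This shows $\mathrm{tr}(Q_r\mathcal{C}Q_r)\le\sum_{i=0}^r\lambda_i^{\textup{KL}}$.

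Finally, $Q_r = P^{\textup{KL}}_r$ attains this bound, since then $w_i=\mathbbm{1}_{i\le r}$. Substituting into the trace identity yields both the minimality of $P^{\textup{KL}}_r$ and the error value $\sum_{i>r}\lambda_i^{\textup{KL}}$. The only delicate points are the summability used to exchange expectation and infinite sums, which is justified by the trace-class property, and keeping the $r$ versus $r+1$ indexing consistent with the paper's definition of $P^{\textup{KL}}_r$.
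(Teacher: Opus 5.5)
Your proof is correct and complete. The paper does not prove this proposition itself; it restates it from \cite{Luo2025} (Prop.~22). Your argument therefore supplies a self-contained derivation by the standard PCA route. You reduce the mean squared error to $\mathrm{tr}\,\mathcal{C}-\mathrm{tr}(Q_r\mathcal{C}Q_r)$ via Pythagoras and the covariance identity. You then prove the Ky Fan bound directly through the weights $w_i=\|Q_r\psi_i^{\textup{KL}}\|_{\mathbb{U}}^2\in[0,1]$ with $\sum_i w_i=r+1$.

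Compared with the citation, your route buys two things. First, it makes explicit where each hypothesis enters. Trace-class-ness of $\mathcal{C}$ gives finiteness of $\mathbb{E}\|v\|^2$, so the subtraction is legitimate. Completeness of the KL basis in $\mathbb{U}$, which follows from the paper's assumption that $\mathcal{C}$ is strictly positive definite, is what lets you expand each $q_j$. Second, the same argument applies verbatim with $\mathcal{C}$ replaced by $\mathcal{K}|_\mathcal{E}$ and $\mathbb{U}$-orthogonal projectors replaced by $\mathcal{E}$-orthogonal ones, because $\mathbb{E}_\mu\|D_\mathcal{E}F(I-Q_r)\|^2_{\textup{HS}(\mathcal{E},\mathcal{Y})}=\mathrm{tr}\,\mathcal{K}|_\mathcal{E}-\mathrm{tr}(Q_r\mathcal{K}|_\mathcal{E}Q_r)$. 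It therefore also yields the identity \eqref{eq:approx_as_der} used in the proof of \Cref{prop:di}, which the paper likewise outsources to \cite{Luo2025}. This makes precise the paper's remark that the DI subspace is a PCA of the derivative.

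Your treatment of the indexing is necessary, not cosmetic. The paper describes $\mathcal{Q}_r$ as rank-$r$ projectors, but $P^{\textup{KL}}_r=\sum_{i=0}^r\psi_i^{\textup{KL}}\otimes\psi_i^{\textup{KL}}$ has rank $r+1$. Read literally, $P^{\textup{KL}}_r\notin\mathcal{Q}_r$, and the minimum over rank-$r$ projectors would be $\sum_{i\ge r}\lambda_i^{\textup{KL}}$. The first equality in the statement would then fail, since $\lambda_r^{\textup{KL}}>0$. Interpreting $\mathcal{Q}_r$ as projectors of rank $r+1$, as you do, is the only reading under which the statement is consistent.
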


\subsubsection{Derivative-informed subspaces}
\label{sec:as}

Whereas the KL basis identifies directions of $\mathbb{U}$ along which the stochastic process has maximum variance, the derivative-informed (DI) basis identifies directions in $\mathcal{E} \subset \mathbb{U}$ along which a functional $F$ has maximum mean squared sensitivity. In this section, we review results on derivative-informed subspaces of infinite-dimensional Hilbert spaces, which were first formulated in \cite{Luo2025}.

The derivative-informed subspace is defined with respect to a functional of interest, denoted $F: \mathbb{U} \to \mathcal{Y}$. We first state our assumptions on $F$.

\begin{condition}[Conditions on $F$]
    \label{cond:f}
    The operator $F: \mathbb{U} \to \mathcal{Y}$ belongs to $C^1(\mathbb{U},\mathcal{Y})$ and its Fr\'echet derivative belongs to $L^2(\mu,\mathcal{L}(\mathbb{U},\mathcal{Y}))$. 
\end{condition}
Let $D_\mathcal{E}F(u): \mathcal{E} \to \mathcal{Y}$ be the Fr\'echet derivative of $F$ at $u$ restricted to the Cameron-Martin space and $D_\mathcal{E}F(u)^*: \mathcal{Y} \to \mathcal{E}$ its adjoint with respect to the $\mathcal{E}$-inner product. One can derive the derivative-informed basis from the spectral decomposition of a particular operator associated with the sensitivity of the functional $F$. 

\begin{definition}[Derivative-informed (DI) basis]
\label{def:dibasis}
Let $\mathcal{K}|_\mathcal{E}: \mathcal{E} \to \mathcal{E}$ be the \textit{sensitivity operator}
    \begin{equation}
    \label{eq:egop_e}
    \mathcal{K}|_\mathcal{E}w \coloneqq \mathbb{E}_{u \sim \mu}\big[ D_\mathcal{E}F(u)^*  D_\mathcal{E}F(u) (w) \big], \quad w \in \mathcal{E} \ .
    \end{equation}
The derivative-informed (DI) basis $\{\psi^{\textup{DI}}_i\}_{i\geq 0}$ is the set of eigenfunctions of $\mathcal{K}|_\mathcal{E}$, solving
    \begin{equation}
    \label{eq:eig_as}
    \mathcal{K}|_\mathcal{E} \psi^{\textup{DI}}_i=\lambda^{\textup{DI}}_i \psi^{\textup{DI}}_i \ .
    \end{equation}
\end{definition}

When $F$ satisfies \Cref{cond:f}, then $D_\mathcal{E}F(u)$ is Hilbert-Schmidt for $\mu$-almost surely all $u$ and $\mathcal{K}|_\mathcal{E}$ is self-adjoint, positive semidefinite, and trace-class. As a result, the eigenfunctions $\{\psi^{\textup{DI}}_i\}_{i \geq 0}$ form an orthonormal basis of $\mathcal{E}$ and the eigenvalues $\lambda^{\textup{DI}}_0 \geq \lambda^{\textup{DI}}_1 \geq \dots \geq 0$ are non-negative. The proof of \Cref{lem:traceclass} is in \Cref{app:sensitivitypf}. 

\begin{lemma}[Sensitivity operator is self-adjoint, positive semidefinite, and trace-class]
    \label{lem:traceclass}
    For $F$ which satisfies \Cref{cond:f}, the sensitivity operator $\mathcal{K}|_\mathcal{E}$ defined in \eqref{eq:egop_e} is a self-adjoint, positive semidefinite, trace-class operator on $\mathcal{E}$.
\end{lemma}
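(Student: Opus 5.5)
The plan is to work pointwise in $u$ and then take expectations. For each fixed $u$, set $A_u \coloneqq D_\mathcal{E}F(u) = DF(u)\circ \iota$, where $\iota:\mathcal{E}\hookrightarrow\mathbb{U}$ is the inclusion. The main structural fact is that $\iota$ is Hilbert--Schmidt. To see this, write $\iota = \mathcal{C}^{\half}\circ J$, where $J:\mathcal{E}\to\mathbb{U}$, $J = \mathcal{C}^{-\half}$, is an isometric isomorphism. Then
\[
\|\iota\|_{HS(\mathcal{E},\mathbb{U})}^2 = \|\mathcal{C}^{\half}\|_{HS(\mathbb{U})}^2 = \operatorname{tr}\mathcal{C} < \infty,
\]
because $\mathcal{C}$ is trace class. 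Since $DF(u)\in\mathcal{L}(\mathbb{U},\mathcal{Y})$, the composition $A_u$ is Hilbert--Schmidt, with
\[
\|A_u\|_{HS(\mathcal{E},\mathcal{Y})}^2 \le \|DF(u)\|_{\mathcal{L}(\mathbb{U},\mathcal{Y})}^2 \operatorname{tr}\mathcal{C}.
\]
Hence $A_u^*A_u:\mathcal{E}\to\mathcal{E}$ is self-adjoint, positive semidefinite, and trace class, with $\operatorname{tr}(A_u^*A_u)=\|A_u\|_{HS}^2$.

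Next I make sense of the expectation in \eqref{eq:egop_e}. First, $u\mapsto A_u^*A_u$ is strongly measurable into $\mathcal{L}_1(\mathcal{E})$: this follows from the continuity of $u\mapsto DF(u)$ in the $C^1$ assumption of \Cref{cond:f}, together with separability of $\mathcal{E}$. By \Cref{cond:f},
\[
\mathbb{E}_\mu\|A_u^*A_u\|_{\mathcal{L}_1} \le \operatorname{tr}\mathcal{C}\;\mathbb{E}_\mu\|DF(u)\|^2 < \infty .
\]
So the Bochner integral $\mathcal{K}|_\mathcal{E}=\mathbb{E}_\mu[A_u^*A_u]$ exists in the trace class $\mathcal{L}_1(\mathcal{E})$, which is a Banach space. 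This gives the trace-class property, with $\|\mathcal{K}|_\mathcal{E}\|_{\mathcal{L}_1}\le \operatorname{tr}\mathcal{C}\,\|DF\|_{L^2(\mu)}^2$. The same integral also agrees with the weak (pointwise) definition $w\mapsto\mathbb{E}[A_u^*A_uw]$.

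Self-adjointness and positivity then follow by passing continuous linear functionals through the integral. For $w,w'\in\mathcal{E}$,
\[
\la \mathcal{K}|_\mathcal{E}w,w'\ra_\mathcal{E} = \mathbb{E}_\mu\la A_uw,A_uw'\ra_\mathcal{Y},
\]
and the right-hand side is symmetric in $w,w'$. Taking $w'=w$ gives $\mathbb{E}_\mu\|A_uw\|_\mathcal{Y}^2\ge0$.

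The main obstacle is the Hilbert--Schmidt bound on the inclusion $\mathcal{E}\hookrightarrow\mathbb{U}$. It is the only place where the restriction to the Cameron--Martin space matters, since on $\mathbb{U}$ itself $DF(u)^*DF(u)$ need not be trace class. It also needs some care with $\mathcal{C}^{-\half}$ when $\mathcal{C}$ has a kernel; because $\mathcal{C}$ is assumed strictly positive, I will simply invoke that assumption. Measurability is routine, and if needed I would avoid it by defining $\mathcal{K}|_\mathcal{E}$ via the bounded symmetric form $\mathbb{E}\la A_uw,A_uw'\ra$ and verifying the trace bound $\sum_i\la\mathcal{K}e_i,e_i\ra = \mathbb{E}\|A_u\|_{HS}^2$ by monotone convergence in an orthonormal basis $\{e_i\}$ of $\mathcal{E}$.
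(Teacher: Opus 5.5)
Your proof is correct, and its core coincides with the paper's. Both proofs rest on three ingredients: the factorization $D_\mathcal{E}F(u) = DF(u)\circ\imath$ with $\imath:\mathcal{E}\hookrightarrow\mathbb{U}$ Hilbert--Schmidt, the bound $\|D_\mathcal{E}F(u)\|_{\mathrm{HS}}\le\|DF(u)\|_{\mathcal{L}(\mathbb{U},\mathcal{Y})}\,\|\imath\|_{\mathrm{HS}}$, and integration against $\mu$ using \Cref{cond:f}. Your self-adjointness and positivity arguments are the same as the paper's.

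There are two differences. First, the paper cites an external lemma for $\imath\in\mathrm{HS}(\mathcal{E},\mathbb{U})$. You prove it directly from $\imath=\mathcal{C}^{\half}\circ\mathcal{C}^{-\half}$ and obtain the explicit value $\|\imath\|^2_{\mathrm{HS}}=\operatorname{tr}\mathcal{C}$.

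Second, and more substantively, the paper gets the trace-class property by computing $\sum_i\la\mathcal{K}|_\mathcal{E}e_i,e_i\ra_\mathcal{E}=\mathbb{E}_\mu\|D_\mathcal{E}F(u)\|^2_{\mathrm{HS}}$ via Tonelli. It then uses the criterion that a positive self-adjoint operator with finite trace is trace class; this is exactly your fallback. Your main route instead realizes $\mathcal{K}|_\mathcal{E}$ as a Bochner integral in the Banach space $\mathcal{L}_1(\mathcal{E})$.

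Your route costs a measurability argument, which you handle correctly. Operator-norm continuity of $DF$ from the $C^1$ hypothesis gives trace-norm continuity of $u\mapsto A_u^*A_u$. Separability of $\mathcal{L}_1(\mathcal{E})$ then yields strong measurability. In exchange, your route makes rigorous that the operator-valued expectation in \eqref{eq:egop_e} is a well-defined bounded operator, which the paper leaves implicit. It also gives the quantitative bound $\|\mathcal{K}|_\mathcal{E}\|_{\mathcal{L}_1}\le\operatorname{tr}\mathcal{C}\;\mathbb{E}_\mu\|DF(u)\|^2_{\mathcal{L}(\mathbb{U},\mathcal{Y})}$. The paper's route is shorter and avoids operator-valued integration.
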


A further consequence of \Cref{cond:f} is that we can equivalently formulate \eqref{eq:eig_as} as a generalized eigenvalue problem. This formulation is leveraged when constructing discrete solutions of the derivative-informed basis in \Cref{sec:discretization}. The proof of \Cref{lem:gen_eig} is given in \Cref{app:adjointpf}. 

\begin{lemma}[Derivative-informed basis as the solution to a generalized eigenvalue problem] 
\label{lem:gen_eig}
For strictly positive definite covariance $\mathcal{C}$ and \Cref{cond:f}, $\mathcal{K}|_\mathcal{E} = \mathcal{C} \mathcal{K}$ where $\mathcal{K}:\mathbb{U} \to \mathbb{U}$ is given by
    \begin{equation}
        \label{eq:egop}
        \mathcal{K} h \coloneqq \mathbb{E}_{u \sim \mu}\big[ DF(u)^* DF(u) (h) \big], \quad h \in \mathbb{U} ,
    \end{equation}
and the derivative-informed basis in \Cref{def:dibasis} also solves the generalized eigenvalue problem defined by $(\mathcal{K}, \mathcal{C}^{-1})$, i.e.,
    \begin{equation}
        \label{eq:geneig}
        \mathcal{K} \psi^{\textup{DI}}_i=\lambda^{\textup{DI}}_i \mathcal{C}^{-1} \psi^{\textup{DI}}_i \ .
    \end{equation}
\end{lemma}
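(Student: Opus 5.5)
The plan is to compute the $\mathcal{E}$-adjoint $D_\mathcal{E}F(u)^*$ explicitly in terms of the $\mathbb{U}$-adjoint $DF(u)^*$. Once this is done, both claims follow by linearity of the expectation and by applying $\mathcal{C}^{-1}$ on its domain $\text{Im}(\mathcal{C})$.

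\textbf{Step 1: the adjoint identity.} Fix $u$ with $DF(u)\in\mathcal{L}(\mathbb{U},\mathcal{Y})$. Since $\mathcal{E}$ embeds continuously into $\mathbb{U}$, we have $D_\mathcal{E}F(u)=DF(u)|_\mathcal{E}$. For $y\in\mathcal{Y}$ and $w\in\mathcal{E}$ we compute
\[
\la D_\mathcal{E}F(u)w, y\ra_\mathcal{Y} = \la w, DF(u)^*y\ra_\mathbb{U} = \la \mathcal{C}^{-\half}w, \mathcal{C}^{\half}DF(u)^*y\ra_\mathbb{U} = \la w, \mathcal{C}DF(u)^*y\ra_\mathcal{E}.
\]
The middle equality uses self-adjointness of $\mathcal{C}^{\half}$ on $\mathbb{U}$ together with $w=\mathcal{C}^{\half}\mathcal{C}^{-\half}w$. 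The last equality uses $\mathcal{C}DF(u)^*y=\mathcal{C}^{\half}(\mathcal{C}^{\half}DF(u)^*y)\in\mathcal{E}$ and the definition of the $\mathcal{E}$-inner product. By uniqueness of the adjoint on $\mathcal{E}$, this gives $D_\mathcal{E}F(u)^*=\mathcal{C}\,DF(u)^*$.

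\textbf{Step 2: $\mathcal{K}|_\mathcal{E}=\mathcal{C}\mathcal{K}$.} For $w\in\mathcal{E}$, Step 1 gives $D_\mathcal{E}F(u)^*D_\mathcal{E}F(u)w=\mathcal{C}DF(u)^*DF(u)w$ pointwise in $u$. Two facts now have to be checked.

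First, $\mathcal{K}$ must be well defined as a Bochner expectation in $\mathcal{L}(\mathbb{U})$. Under \Cref{cond:f}, $\|DF(u)^*DF(u)\|\le\|DF(u)\|^2$, which is $\mu$-integrable. Strong measurability follows from continuity of $u\mapsto DF(u)$, since $F\in C^1$.

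Second, the bounded operator $\mathcal{C}$ must be moved outside the expectation. This is valid because bounded linear maps commute with Bochner integrals. Hence $\mathcal{K}|_\mathcal{E}w=\mathcal{C}\mathcal{K}w$ for every $w\in\mathcal{E}$.

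This integrability and measurability bookkeeping is the main technical point, though it is routine. The finer Hilbert--Schmidt structure from \Cref{lem:traceclass} is not needed here, only boundedness.

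\textbf{Step 3: the generalized eigenproblem.} From \eqref{eq:eig_as} and Step 2 we get $\mathcal{C}\mathcal{K}\psi^{\textup{DI}}_i=\lambda^{\textup{DI}}_i\psi^{\textup{DI}}_i$.

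Consider first $\lambda^{\textup{DI}}_i\neq0$. Then $\psi^{\textup{DI}}_i=\mathcal{C}(\mathcal{K}\psi^{\textup{DI}}_i/\lambda^{\textup{DI}}_i)\in\text{Im}(\mathcal{C})$, so $\mathcal{C}^{-1}\psi^{\textup{DI}}_i$ is defined. Because $\mathcal{C}$ is strictly positive definite, it is injective. Applying $\mathcal{C}^{-1}$ to both sides therefore yields \eqref{eq:geneig}.

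Now consider $\lambda^{\textup{DI}}_i=0$. Injectivity of $\mathcal{C}$ gives $\mathcal{K}\psi^{\textup{DI}}_i=0$, so \eqref{eq:geneig} holds with zero on the right-hand side. The product $0\cdot\mathcal{C}^{-1}\psi^{\textup{DI}}_i$ is read as $0$, or equivalently as the weak statement $\la\mathcal{K}\psi^{\textup{DI}}_i,h\ra_\mathbb{U}=\lambda^{\textup{DI}}_i\la\psi^{\textup{DI}}_i,h\ra_\mathcal{E}$ for $h\in\mathcal{E}$.

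The one delicate point is this domain issue for $\mathcal{C}^{-1}$. I would handle it by this case split, or by stating \eqref{eq:geneig} in the weak form above.
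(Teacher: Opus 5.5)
Your proposal is correct and follows the same route as the paper's proof. Both establish the adjoint identity $D_\mathcal{E}F(u)^* = \mathcal{C}\,DF(u)^*$ by rewriting the $\mathbb{U}$-pairing as an $\mathcal{E}$-pairing through $\mathcal{C}^{\pm 1/2}$, then pull $\mathcal{C}$ out of the expectation to obtain $\mathcal{K}|_\mathcal{E} = \mathcal{C}\mathcal{K}$, and finally apply $\mathcal{C}^{-1}$. Your extra justifications fill in points the paper passes over silently: the Bochner integrability and the commuting of the bounded $\mathcal{C}$ with the expectation, and the case $\lambda^{\textup{DI}}_i = 0$, where $\psi^{\textup{DI}}_i$ need not lie in $\text{Im}(\mathcal{C})$ and which you handle through the weak form.
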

%
From the derivative-informed basis $\{\psi^{\textup{DI}}_r\}_{i \geq 0}$, a projector $P^{\textup{DI}}_r: \mathbb{U} \to \mathcal{E}$ for $r \leq \text{rank}(\mathcal{K}|_\mathcal{E})$ is then constructed as
\[
P^{\textup{DI}}_r = \sum_{i=0}^r \psi_i^{\textup{DI}} \otimes \mathcal{C}^{-1} \psi_i^{\textup{DI}} \ . 
\]
Note that $P^{\textup{DI}}_r$ is non-orthogonal with respect to the $\mathbb{U}$-inner product. However, under \Cref{cond:f}, the projector restricted to elements of the Cameron-Martin space, $P^{\textup{DI}}_r|_\mathcal{E} = \sum_{i=0}^r \psi^{\textup{DI}}_i \otimes_\mathcal{E} \psi^{\textup{DI}}_i$ where $\psi \otimes_\mathcal{E} \psi \coloneqq  \psi \la \psi, \cdot \ra_\mathcal{E}$, is an orthogonal projection in $\mathcal{E}$.

We generalize the result in \cite[Lemma 3.1]{Constantine2020} to the infinite-dimensional case in \Cref{lem:directderiv}, which shows that the eigenvalues of the derivative-informed basis have a variational interpretation: each eigenvalue $\lambda^{\textup{DI}}_i$ is the largest mean squared directional derivative of $F$ along a unit Cameron--Martin direction that is orthogonal to the preceding eigenfunctions, and the maximum is attained in the direction $\psi^{\textup{DI}}_i$. The proof of \Cref{lem:directderiv} is in \Cref{app:ddpf}. 

\begin{lemma}
    \label{lem:directderiv}
    Let $F$ be the observable defining $\mathcal{K}|_\mathcal{E}$, and suppose it satisfies \Cref{cond:f}. Let $\{(\lambda^{\textup{DI}}_i, \psi^{\textup{DI}}_i)\}_{i \geq 0}$ denote the eigenpairs of $\mathcal{K}|_\mathcal{E}$ solving \eqref{eq:eig_as}, ordered so that $\lambda^{\textup{DI}}_i \geq \lambda^{\textup{DI}}_{i+1}$. For each $i \geq 0$, define the set of admissible directions
    \[
    \mathcal{W}_i \coloneqq \bigl\{ w \in \mathcal{E} \;:\; \|w\|_\mathcal{E} = 1, \ \langle w, \psi^{\textup{DI}}_k \rangle_\mathcal{E} = 0 \ \text{for } k = 0, \dots, i-1 \bigr\}.
    \]
    Then, for every $i \geq 0$,
    \[
    \lambda^{\textup{DI}}_i
    = \max_{w \in \mathcal{W}_i} \, \mathbb{E}_{u \sim \mu}\bigl[ \| D_\mathcal{E} F(u)(w) \|^2_\mathcal{Y} \bigr]
    = \mathbb{E}_{u \sim \mu}\bigl[ \| D_\mathcal{E} F(u)(\psi^{\textup{DI}}_i) \|^2_\mathcal{Y} \bigr].
    \]
\end{lemma}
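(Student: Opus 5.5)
The argument is the Courant--Fischer (Rayleigh quotient) characterization for a compact self-adjoint operator on the Hilbert space $\mathcal{E}$.

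First, rewrite the objective as a quadratic form in $\mathcal{K}|_\mathcal{E}$. For $w \in \mathcal{E}$ we have
\[
\|D_\mathcal{E}F(u)(w)\|_\mathcal{Y}^2 = \langle D_\mathcal{E}F(u)^* D_\mathcal{E}F(u) w, w\rangle_\mathcal{E}.
\]
Taking expectations and exchanging expectation with the $\mathcal{E}$-inner product gives
\[
\mathbb{E}_{u\sim\mu}\bigl[\|D_\mathcal{E}F(u)(w)\|_\mathcal{Y}^2\bigr] = \langle \mathcal{K}|_\mathcal{E} w, w\rangle_\mathcal{E}.
\]
The exchange is the only analytic point requiring care. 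It is justified by \Cref{cond:f}, since $D F \in L^2(\mu,\mathcal{L}(\mathbb{U},\mathcal{Y}))$ and the embedding $\mathcal{E}\hookrightarrow\mathbb{U}$ is continuous. Together these give
\[
\mathbb{E}\bigl[\|D_\mathcal{E}F(u)\|^2_{\mathcal{L}(\mathcal{E},\mathcal{Y})}\bigr] \lesssim \mathbb{E}\bigl[\|DF(u)\|^2_{\mathcal{L}(\mathbb{U},\mathcal{Y})}\bigr] < \infty.
\]
Hence the map $u \mapsto D_\mathcal{E}F(u)^*D_\mathcal{E}F(u)w$ is Bochner integrable in $\mathcal{E}$, and a bounded linear functional commutes with the Bochner integral. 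This is the same integrability that underlies the definition of $\mathcal{K}|_\mathcal{E}$ in \eqref{eq:egop_e} and \Cref{lem:traceclass}, so I expect it to be routine.

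Second, invoke \Cref{lem:traceclass}. The operator $\mathcal{K}|_\mathcal{E}$ is self-adjoint, positive semidefinite and trace-class, hence compact. By the spectral theorem, its eigenfunctions $\{\psi^{\textup{DI}}_k\}$ can be taken to be a complete orthonormal basis of $\mathcal{E}$ with nonincreasing eigenvalues $\lambda^{\textup{DI}}_k \geq 0$. If there is a nontrivial kernel, it is included in the basis with eigenvalue zero.

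Third, carry out the Rayleigh-quotient argument. Fix $w \in \mathcal{W}_i$ and expand $w = \sum_{k\ge i} c_k \psi^{\textup{DI}}_k$ with $c_k = \langle w,\psi^{\textup{DI}}_k\rangle_\mathcal{E}$. The coefficients with $k<i$ vanish by orthogonality, and $\sum_{k\ge i} c_k^2 = 1$. By continuity of $\mathcal{K}|_\mathcal{E}$ and orthonormality,
\[
\langle \mathcal{K}|_\mathcal{E} w, w\rangle_\mathcal{E} = \sum_{k\ge i}\lambda^{\textup{DI}}_k c_k^2 \le \lambda^{\textup{DI}}_i .
\]
Conversely, $\psi^{\textup{DI}}_i \in \mathcal{W}_i$ and
\[
\langle \mathcal{K}|_\mathcal{E}\psi^{\textup{DI}}_i,\psi^{\textup{DI}}_i\rangle_\mathcal{E} = \lambda^{\textup{DI}}_i .
\]
So the supremum is attained, it is a maximum, and it equals $\lambda^{\textup{DI}}_i$. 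Combining this with the first step yields both equalities in \Cref{lem:directderiv}.

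Two subtleties remain. The first is that the series computation needs convergence of the expansion in $\mathcal{E}$ and continuity of $\mathcal{K}|_\mathcal{E}$, both of which are immediate. The second is that the eigenbasis must be complete, not merely an orthonormal set spanning $\overline{\mathrm{Im}}\,\mathcal{K}|_\mathcal{E}$; this is handled by adjoining an orthonormal basis of the kernel, as in the second step. With these settled, nothing beyond the interchange of expectation and inner product in the first step is delicate.
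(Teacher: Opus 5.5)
Your proposal is correct and follows the paper's proof essentially step for step. You expand $w\in\mathcal{W}_i$ in the eigenbasis furnished by \Cref{lem:traceclass}, bound $\sum_{k\geq i}\lambda^{\textup{DI}}_k c_k^2\leq\lambda^{\textup{DI}}_i$, and note attainment at $\psi^{\textup{DI}}_i$; you also justify, via Bochner integrability, the identity $\langle\mathcal{K}|_\mathcal{E}w,w\rangle_\mathcal{E}=\mathbb{E}_{u\sim\mu}[\|D_\mathcal{E}F(u)(w)\|^2_\mathcal{Y}]$, which the paper uses without comment.

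One minor refinement: completeness of the eigenbasis is not actually needed. It also cannot always be arranged, since a nontrivial kernel cannot be appended to a nonincreasing $\mathbb{N}_0$-indexed sequence when there are infinitely many positive eigenvalues. Writing $w=\sum_{k\geq i}c_k\psi^{\textup{DI}}_k+w_0$ with $w_0\in\ker\mathcal{K}|_\mathcal{E}$ and using $\sum_{k\geq i}c_k^2\leq\|w\|^2_\mathcal{E}=1$ gives the same bound.
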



\subsubsection{Subspace approximations of functionals on the Hilbert space}

For a given rank-$r$ projector $P_r$ on the Hilbert space $\mathbb{U}$, \Cref{prop:optapprox} gives the analytical form for the optimal subspace approximation of a functional $F$ with respect to $P_r$, as provided in \cite[Sec. 2.1]{Zahm2020} and \cite[Sec. 3.3]{Luo2025}.

\begin{proposition}[Optimal rank-$r$ subspace approximation of $F$.]
\label{prop:optapprox}
    Let $\mathcal{H} \coloneqq \{ F: \mathbb{U} \to \mathcal{Y} \, | \, F \text{ meets \Cref{cond:f}} \}$. For any rank-$r$ projector $P_r$ in $\mathbb{U}$ for $r \in \mathbb{N}$, let $\mathcal{H}_{P_r} \subset \mathcal{H}$ denote the set of $\mathcal{G}_{P_r}$-measurable functions in $\mathcal{H}$, where $\mathcal{G}_{P_r}$ denotes the $\sigma$-algebra generated by $P_r$. For a Gaussian measure $\mu$ on $\mathbb{U}$, the $L^2(\mu, \mathcal{Y})$-optimal approximation of $F:\mathbb{U} \to \mathcal{Y}$ restricted to $\mathcal{H}_{P_r}$ is
    \[
        \mathbb{E}_\mu \big[ F| \mathcal{G}_{P_r} ] = \underset{F_r \in \mathcal{H}_{P_r}}{\textup{argmin}} \, \mathbb{E}_{u \sim \mu} \big[ \lVert F(u)-F_r(u) \rVert^2_\mathcal{Y}\big] \, .
    \]
\end{proposition}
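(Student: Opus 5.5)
The claim is that the conditional expectation $\mathbb{E}_\mu[F\mid\mathcal{G}_{P_r}]$ is the orthogonal projection of $F$ onto the closed subspace of $\mathcal{G}_{P_r}$-measurable elements of $L^2(\mu,\mathcal{Y})$. I would prove it with the standard Hilbert-space characterization of conditional expectation, and then check that this projection is admissible, i.e.\ lies in $\mathcal{H}_{P_r}$. The argument has four steps.

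\emph{Step 1 (square integrability).} Under \Cref{cond:f}, $F\in L^2(\mu,\mathcal{Y})$. Since $\mu$ is Gaussian, it satisfies the Gaussian Poincar\'e inequality along Cameron--Martin directions:
\[
\mathbb{E}_\mu\bigl\|F-\mathbb{E}_\mu F\bigr\|_\mathcal{Y}^2 \le \mathbb{E}_\mu\bigl\|D_\mathcal{E}F\bigr\|_{HS(\mathcal{E},\mathcal{Y})}^2 .
\]
The right-hand side is finite because $\|D_\mathcal{E}F(u)\|_{HS}^2 \le \operatorname{tr}(\mathcal{C})\,\|DF(u)\|_{\mathcal{L}(\mathbb{U},\mathcal{Y})}^2$ for $\mathcal{Y}$ scalar or finite-dimensional. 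It follows that $\mathbb{E}_\mu[F\mid\mathcal{G}_{P_r}]$ exists in $L^2(\mu,\mathcal{Y})$ as a Bochner conditional expectation.

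\emph{Step 2 (Pythagorean identity).} Fix any $F_r\in\mathcal{H}_{P_r}$ and write $F-F_r = (F-\mathbb{E}[F\mid\mathcal{G}_{P_r}]) + (\mathbb{E}[F\mid\mathcal{G}_{P_r}]-F_r)$. The second summand is $\mathcal{G}_{P_r}$-measurable and square integrable. By the tower property and pulling out what is known,
\[
\mathbb{E}_\mu\bigl\langle F-\mathbb{E}[F\mid\mathcal{G}_{P_r}],\,G\bigr\rangle_\mathcal{Y}=0
\]
for every such $G$. Hence
\[
\mathbb{E}\|F-F_r\|^2=\mathbb{E}\bigl\|F-\mathbb{E}[F\mid\mathcal{G}_{P_r}]\bigr\|^2+\mathbb{E}\bigl\|\mathbb{E}[F\mid\mathcal{G}_{P_r}]-F_r\bigr\|^2 ,
\]
which is minimized exactly when $F_r=\mathbb{E}[F\mid\mathcal{G}_{P_r}]$ $\mu$-a.e.

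\emph{Step 3 (admissibility).} The main obstacle is showing that the minimizer belongs to $\mathcal{H}_{P_r}$, i.e.\ that it is itself $C^1$ with an $L^2$ derivative. My plan is to use the Gaussian structure. Choose a complement so that $u-m_0=P_r(u-m_0)+(I-P_r)(u-m_0)$, with the two components independent under $\mu$. For the DI projector this holds because $P^{\textup{DI}}_r$ is $\mathcal{E}$-orthogonal; for a general $P_r$ I would take it $\mathcal{C}$-orthogonal (``$\mu$-compatible''), as in \cite{Luo2025}. Then
\[
\mathbb{E}[F\mid\mathcal{G}_{P_r}](u)=\int F\bigl(m_0+P_r(u-m_0)+(I-P_r)(v-m_0)\bigr)\,\mu(\upd v)
\]
is a function of $P_r u$ alone. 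Differentiating under the integral, by dominated convergence using $DF\in L^2(\mu)$ together with Cameron--Martin quasi-invariance to handle the shift, gives a $C^1$ map whose derivative is $\mathbb{E}[DF\mid\mathcal{G}_{P_r}]\circ P_r$. Jensen's inequality then gives square integrability of this derivative. If full $C^1$ regularity proves delicate, I would fall back on interpreting $\mathcal{H}_{P_r}$ through its $L^2$ closure, where the conclusion of Step 2 already suffices.

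\emph{Step 4 (uniqueness).} Uniqueness of the argmin up to $\mu$-null sets follows from the strict convexity visible in the Pythagorean identity of Step 2.
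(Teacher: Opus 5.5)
Your Steps 1, 2 and 4 are the paper's argument made explicit. The paper only invokes the Hilbert projection theorem, identifies $\mathbb{E}_\mu[F\mid\mathcal{G}_{P_r}]$ with the orthogonal projection onto $\mathcal{G}_{P_r}$-measurable functions, and quotes Bogachev's fibre-integral formula when $P_r$ and $I-P_r$ are independent. Your Pythagorean identity is exactly that orthogonality, and your check that $F\in L^2(\mu,\mathcal{Y})$ fills in a point the paper skips.

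The genuine gap is Step 3, and it cannot be closed. Differentiating $u\mapsto\int F(m_0+P_r(u-m_0)+(I-P_r)(v-m_0))\,\mu(\upd v)$ under the integral needs a bound on $\|DF\|$ along the fibre $m_0+P_r(u-m_0)+\ker P_r$ that is uniform for $P_ru$ in a neighbourhood. But $DF\in L^2(\mu)$ gives, via Fubini and independence, only square integrability on $\mu$-almost every fibre. Cameron--Martin quasi-invariance cannot upgrade this: each fibre law lives on an affine translate of $\ker P_r$, a $\mu$-null set, so an $L^2(\mu)$ bound says nothing about any particular fibre.

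In fact the conclusion of Step 3 is false. Take $\mathbb{U}=\mathbb{R}^2$, $\mu=\mathcal{N}(0,I)$ with one-dimensional marginal $\gamma$, and $P_r$ the projection onto the first coordinate. Let $F(x,y)=e^{-y^2/4}H(xe^{3y^2/4})$, where $H$ is smooth, bounded and nondecreasing, with $H'$ compactly supported and $H'(0)>0$. Then:
\begin{itemize}
\item $F$ is smooth and bounded, $|\partial_yF|\lesssim|y|e^{-y^2/4}$, and $\mathbb{E}_\mu|\partial_xF|^2\lesssim\int e^{y^2/4}\,\gamma(\upd y)<\infty$, so $F$ meets \Cref{cond:f};
\item $g(x)=\int F(x,y)\,\gamma(\upd y)$ is continuous, yet by Fatou $(g(x)-g(0))/x\to\infty$ as $x\downarrow0$, because $\int e^{y^2/2}\,\gamma(\upd y)=\infty$.
\end{itemize}
Hence no version of $\mathbb{E}_\mu[F\mid\mathcal{G}_{P_r}]$ is $C^1$, since a continuous version would have to coincide with $g$ everywhere.

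$C_c^\infty$ functions of $x$ lie in $\mathcal{H}_{P_r}$ and are dense in $L^2(\gamma)$. Your own Pythagorean identity then shows that in this example the infimum over $\mathcal{H}_{P_r}$ is not attained at all. So the proposition as literally stated fails. It holds only with $\mathcal{H}_{P_r}$ replaced by its closure in $L^2(\mu,\mathcal{Y})$, namely all square-integrable $\mathcal{G}_{P_r}$-measurable maps. That closed subspace is what the paper's appeal to the projection theorem tacitly uses, since the theorem says nothing about the non-closed set $\mathcal{H}_{P_r}$.

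Your fallback is therefore not optional: drop Step 3, state the result over the closure, and Steps 1, 2 and 4 then form a complete proof. Relatedly, you cannot ``choose a complement'' for a given $P_r$: replacing it by a $\mathcal{C}$-orthogonal projector changes $\ker P_r$ and hence $\mathcal{G}_{P_r}$. However, nothing in Steps 1, 2 and 4 uses independence; it matters only for the explicit fibre-integral formula.
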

This result follows from the Hilbert projection theorem and showing the conditional expectation is equal to the orthogonal projection of $F \in \mathcal{H}$ onto $\mathcal{H}_{P_r}$. Following \cite[Cor. 3.5.2]{Bogachev1998}, for $P_r$ independent of $(I-P_r)$ under the centered Gaussian measure $\mu_0 = \mathcal{N}(0,\mathcal{C})$, the conditional expectation with respect to $\mu_0$ can be expressed as
\begin{equation}
    \mathbb{E}_{\mu_0} \big[ F| \mathcal{G}_{P_r} \big](u) = \mathbb{E}_{z \sim \mu_0} \big[ F(P_ru + (I-P_r)z) \big] \, .
\end{equation}
Thus, the $L^2(\mu_0)$-optimal $\mathcal{G}_{P_r}$-measurable approximation is obtained by retaining the component of $u$ in the $r$-dimensional range of $P_r$ and averaging $F$ over the complementary component $(I-P_r)z$, with $z \sim \mu_0$. A simpler approximation of $F$ which is commonly used in practice is $F_r(u) = F(u_r)$,
where $u_r = m_0 + P_r(u-m_0)$ for $u \sim \mu = \mathcal{N}(m_0, \mathcal{C})$. This approximation, which neglects variation in the orthogonal complement of $P_r$, is equal to the optimum in \Cref{prop:optapprox} when $F$ is $\mu$-almost surely constant in the complementary directions. For instance, if $P_r$ is constructed from the first $r$ eigenfunctions of a spectral operator, then this condition is met when the trailing eigenvalues $(\lambda_{i})_{i \geq r+1}$ associated with the eliminated directions are all equal to zero. 

A special property of the derivative-informed basis is that it admits a certified error bound for the associated subspace approximation of $F$.

\begin{proposition}[Error bound on the rank-$r$ DI approximation of $F$.]
\label{prop:di}
Let $F^*_r$ denote the $L^2(\mu,\mathcal{Y})$-optimal approximation of $F$ in $\mathcal{H}_{P^{\textup{DI}}_r}$. Then
    \[
    \label{eq:approx_as}
    \mathbb{E}_{\mu} \Big[ \left\| F - F^*_r\right\|^2_\mathcal{Y} \Big] \leq \sum_{i=r+1}^\infty \lambda_i^{\textup{DI}}. 
    \]
\end{proposition}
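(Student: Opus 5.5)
The plan is to combine a Gaussian subspace Poincaré inequality with the eigen-decomposition of the sensitivity operator. First, by \Cref{lem:traceclass} and \Cref{lem:gen_eig}, the vectors $\{\psi^{\textup{DI}}_i\}_{i\geq 0}$ form an orthonormal basis of $\mathcal{E}$. Set $\phi_i \coloneqq \mathcal{C}^{-\half}\psi^{\textup{DI}}_i$, which is an orthonormal basis of $\mathbb{U}$. Under $\mu=\mathcal{N}(m_0,\mathcal{C})$, the coordinates $\xi_i(u) \coloneqq \la \mathcal{C}^{-1}\psi_i^{\textup{DI}}, u-m_0\ra_\mathbb{U}$ (defined $\mu$-a.s.\ as measurable linear functionals) are i.i.d.\ standard normal. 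Moreover, $P^{\textup{DI}}_r$ retains exactly $\xi_0,\dots,\xi_r$. Hence $\mathcal{G}_{P^{\textup{DI}}_r}$ is generated by $(\xi_0,\dots,\xi_r)$, and $(I-P^{\textup{DI}}_r)(u-m_0)$ is independent of it. By \Cref{prop:optapprox} and the representation following it, $F^*_r(u)=\mathbb{E}_{z\sim\mu_0}[F(m_0+P^{\textup{DI}}_r(u-m_0)+(I-P^{\textup{DI}}_r)z)]$.

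Next, fix the retained coordinates and apply the Gaussian Poincaré inequality to the map $z\mapsto F(m_0+P^{\textup{DI}}_r(u-m_0)+(I-P^{\textup{DI}}_r)z)$ under the Gaussian measure $(I-P^{\textup{DI}}_r)_\sharp\mu_0$. This is the Poincaré inequality on an abstract Wiener space (Bogachev, Thm.~5.5.11), whose Cameron--Martin space is $\overline{\text{span}}\{\psi^{\textup{DI}}_i: i\geq r+1\}\subset\mathcal{E}$. For $\mathcal{Y}$-valued $F$, apply it componentwise in an orthonormal basis of $\mathcal{Y}$ and sum. The result is the conditional bound
\[
\mathbb{E}\bigl[\|F-F^*_r\|^2_\mathcal{Y}\,\big|\,\mathcal{G}_{P^{\textup{DI}}_r}\bigr]\leq \mathbb{E}\Bigl[\sum_{i\geq r+1}\|D_\mathcal{E}F(u)\psi^{\textup{DI}}_i\|^2_\mathcal{Y}\,\Big|\,\mathcal{G}_{P^{\textup{DI}}_r}\Bigr].
\]
Taking the full expectation and using the tower property, the right side becomes $\sum_{i\geq r+1}\mathbb{E}_\mu\|D_\mathcal{E}F(u)\psi_i^{\textup{DI}}\|^2_\mathcal{Y}$. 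Monotone convergence justifies exchanging the sum and the expectation.

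Finally, I identify each term. Since $\mathcal{K}|_\mathcal{E}\psi_i=\lambda_i\psi_i$,
\[
\mathbb{E}_\mu\|D_\mathcal{E}F(u)\psi_i^{\textup{DI}}\|^2_\mathcal{Y}=\la \mathcal{K}|_\mathcal{E}\psi_i^{\textup{DI}},\psi_i^{\textup{DI}}\ra_\mathcal{E}=\lambda_i^{\textup{DI}},
\]
which is also the statement of \Cref{lem:directderiv}. Summing gives the claimed bound. Finiteness of the sum follows from the trace-class property in \Cref{lem:traceclass}.

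The main obstacle is the Poincaré step. The Poincaré inequality must be justified for $C^1$ functionals whose Fréchet derivative lies only in $L^2(\mu)$, not for smooth cylindrical functions. I would first prove it for smooth cylindrical approximations $F_N$ depending on $\xi_0,\dots,\xi_N$, obtained by conditioning $F$ on $\mathcal{G}_{P^{\textup{DI}}_N}$. For these, the finite-dimensional Gaussian Poincaré inequality applies directly in the coordinates $\xi_{r+1},\dots,\xi_N$. Then I would pass to the limit $N\to\infty$ using martingale convergence in $L^2(\mu)$ for $F_N\to F$. For the derivatives, I would use the fact that the $\mathcal{E}$-gradient of $F_N$ is the conditional expectation of that of $F$, so its $L^2$ norm is controlled by Jensen's inequality. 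A further point to check is that the Fréchet derivative restricted to $\mathcal{E}$ coincides with the Malliavin/Cameron--Martin derivative. This follows since Fréchet differentiability on $\mathbb{U}$ implies differentiability along $\mathcal{E}$ via the continuous embedding.
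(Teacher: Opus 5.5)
Your proposal is correct and follows the same route as the paper. A subspace Poincar\'e inequality bounds the conditional-expectation error by $\mathbb{E}_\mu\bigl[\|D_\mathcal{E}F(I-P^{\textup{DI}}_r)\|^2_{\textup{HS}(\mathcal{E},\mathcal{Y})}\bigr]$, and this quantity is then identified with $\sum_{i\geq r+1}\lambda_i^{\textup{DI}}$ through the eigen-relation $\mathcal{K}|_\mathcal{E}\psi^{\textup{DI}}_i=\lambda^{\textup{DI}}_i\psi^{\textup{DI}}_i$. The difference is only in self-containedness: the paper cites both steps from Luo et al.\ (Prop.~26 and Thm.~28), whereas you prove the Poincar\'e step yourself by conditioning, finite-dimensional cylindrical approximation and martingale convergence, and you compute the trailing sum directly rather than through the PCA-optimality statement, whose minimization over $Q_r$ is not needed for the bound.
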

\begin{proof}
    We note that the derivative-informed subspace amounts to a PCA of the Fr\'echet derivative of $F$. As shown in \cite[Prop. 26]{Luo2025}, we therefore have
\begin{equation}
    \label{eq:approx_as_der}
    \mathbb{E}_{\mu} \Big[ \left\| D_\mathcal{E} F (I - P_r^\textup{DI}) \right\|^2_{\textup{HS}(\mathcal{E}, \mathcal{Y})} \Big] = \underset{Q_r \in \mathcal{Q}_r}{\text{min}} \mathbb{E}_{\mu} \Big[ \left\| D_\mathcal{E} F (I - Q_r) \right\|^2_{\textup{HS}(\mathcal{E}, \mathcal{Y})} \Big] = \sum_{i=r+1}^\infty \lambda_i^{\textup{DI}} \ .
\end{equation}
It follows from the subspace Poincar\'e inequality \cite[Thm. 28]{Luo2025} that
    \begin{equation}
        \label{eq:poincare}
        \mathbb{E}_{\mu} \Big[ \left\| F - \mathbb{E}_\mu [ F|\mathcal{G}_{P^{\textup{DI}}_r}]\right\|^2_\mathcal{Y} \Big] \leq \mathbb{E}_{\mu} \Big[ \left\| D_\mathcal{E} F (I - P_r^\textup{DI}) \right\|^2_{\textup{HS}(\mathcal{E}, \mathcal{Y})} \Big] \ .
    \end{equation}
Combining \eqref{eq:approx_as_der} and \eqref{eq:poincare} returns the desired result.
\end{proof}

\section{Main results}
\label{sec:contribution}

\subsection{Dimension reduction of SDEs based on the sensitivity of path space observables}

This section discusses the infinite-dimensional formulation of derivative-informed subspaces of drift functions of SDEs based on the sensitivity of a chosen path space observable.  

\subsubsection{Hilbert space of drift perturbations}
\label{sec:hilbertdrift}

Throughout, we consider functions defined on the finite-dimensional state space $\mathcal{X} \subseteq \mathbb{R}^n$. In \Cref{lem:controlspace}, we show that boundedness of elements in $\mathbb{U}$ is sufficient to guarantee that any perturbation field $u \in \mathbb{U}$ admits a solution to the SDE in \eqref{eq:sde} which is unique in law. 


\begin{lemma}[Unique weak solution of the perturbed SDE]
    \label{lem:controlspace}
    Let $\mathbb{U}$ be a Hilbert space continuously embedded in $L^\infty(\mathcal{X})$. Then for any $u \in \mathbb{U}$, the SDE in \eqref{eq:sde} has a unique weak solution.
\end{lemma}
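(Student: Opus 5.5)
The plan is to build the solution of \eqref{eq:sde} from the unperturbed strong solution $X^0$ by Girsanov's theorem (existence), and to transfer uniqueness in law from $X^0$ back through the same change of measure (uniqueness).

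\textbf{Existence.} Fix $u \in \mathbb{U}$. The continuous embedding $\mathbb{U} \hookrightarrow L^\infty(\mathcal{X})$ gives $\|u\|_{L^\infty} \le C\|u\|_{\mathbb{U}} < \infty$, so $u$ is bounded and measurable; extend it by zero outside $\mathcal{X}$ if needed. By \Cref{cond:driftdiff}, the equation $\upd X_t = b(X_t)\,\upd t + \sigma(X_t)\,\upd W_t$ has a unique strong solution $X^0$. Set $\theta_t \coloneqq u(X^0_t)$, which is progressively measurable with $|\theta_t| \le \|u\|_{L^\infty}$, and define
\[
Z_T \coloneqq \exp\Big( \int_0^T \theta_t \cdot \upd W_t - \half \int_0^T |\theta_t|^2 \, \upd t \Big).
\]
Novikov's condition holds trivially because $\mathbb{E}\big[\exp(\half\int_0^T|\theta_t|^2\,\upd t)\big] \le \exp(\half T\|u\|_\infty^2)$, so $Z$ is a true martingale. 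Define $Q \coloneqq Z_T \, P$. Girsanov's theorem says $\widetilde W_t \coloneqq W_t - \int_0^t \theta_s \,\upd s$ is a $Q$-Brownian motion. Substituting $\upd W_t = \upd \widetilde W_t + u(X^0_t)\,\upd t$ into the equation for $X^0$ gives
\[
\upd X^0_t = (b + \sigma u)(X^0_t)\,\upd t + \sigma(X^0_t)\,\upd \widetilde W_t .
\]
Thus $(X^0, \widetilde W)$ on $(\Omega,\mathcal{F},(\mathcal{F}_t),Q)$ is a weak solution of \eqref{eq:sde} on $[0,T]$. Since $T$ is arbitrary and the Girsanov measures are consistent, this gives a solution on $[0,\infty)$.

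\textbf{Uniqueness in law.} Let $(\hat X, \hat W)$ be any weak solution of \eqref{eq:sde} on some filtered space with measure $\hat P$. The process $u(\hat X_t)$ is again bounded, so by Novikov
\[
\hat Z_T \coloneqq \exp\Big(-\int_0^T u(\hat X_t)\cdot \upd \hat W_t - \half\int_0^T |u(\hat X_t)|^2\,\upd t\Big)
\]
defines a measure $\hat Q \coloneqq \hat Z_T\,\hat P$. Under $\hat Q$, the process $\hat B_t \coloneqq \hat W_t + \int_0^t u(\hat X_s)\,\upd s$ is a Brownian motion, and $\hat X$ solves the unperturbed equation driven by $\hat B$. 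Pathwise uniqueness of the unperturbed equation (\Cref{cond:driftdiff}) gives uniqueness in law via Yamada--Watanabe, so the joint law of $(\hat X, \hat B)$ under $\hat Q$ is fixed.

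Now express the law of $\hat X$ under $\hat P$ as $\hat Z_T^{-1}\,\hat Q$. The density is
\[
\hat Z_T^{-1} = \exp\Big(\int_0^T u(\hat X_t)\cdot \upd \hat B_t - \half\int_0^T |u(\hat X_t)|^2\,\upd t\Big),
\]
which is a measurable functional of the pair $(\hat X, \hat B)$. Because the joint law of $(\hat X,\hat B)$ under $\hat Q$ is determined, so is the law of $\hat X$ under $\hat P$.

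\textbf{Main obstacle.} The delicate step is this last one. A priori $\hat Z_T^{-1}$ depends on $\hat B$, not only on $\hat X$. I would handle this by working with the joint law of $(\hat X,\hat B)$ and writing the stochastic integral as a limit in probability of Riemann sums, so that it is a measurable functional on path space. Alternatively, if $\sigma$ is invertible, one can write $\hat B$ as a functional of $\hat X$ using $\sigma^{+}$, with boundedness supplied by \Cref{cond:driftdiff}. As a fallback, I would invoke the standard result that a bounded measurable drift perturbation preserves well-posedness of the martingale problem (Stroock--Varadhan, or Karatzas--Shreve Prop.~5.3.10).
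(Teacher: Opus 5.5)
Your proposal is correct and takes the same route as the paper. The paper's proof starts from uniqueness in law of the unperturbed strong solution $X^0$ and invokes \cite[Ch.~IX, Thm.~1.11]{RevuzYor1999} for the bounded perturbation $\sigma u$; that theorem packages exactly the Girsanov transfer of existence and uniqueness in law that you write out. The step you flag as the main obstacle is handled as you suggest: once the joint law of $(\hat X,\hat B)$ under $\hat Q$ is fixed, so is the joint law of $\bigl(\hat X,\hat B,\int_0^T u(\hat X_t)\cdot \upd \hat B_t\bigr)$, which is all you need.
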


\begin{proof}
    Under the linear growth and Lipschitz continuity assumptions of \Cref{cond:driftdiff}, the SDE $\upd X^0_t = b(X^0_t) \upd t + \sigma (X^0_t) \upd W_t$ has a unique strong solution $X^0$, meaning it is unique in law. Since $u \in \mathbb{U}$ is bounded, the solution $X^u$ to the SDE in \eqref{eq:sde} exists and is also unique in law, by \cite[Ch.IX, Thm.1.11]{RevuzYor1999}.
\end{proof}

A canonical choice of Hilbert space which is continuously embedded in $L^\infty(\mathcal{X})$ is the Sobolev space $H^m(\mathcal{X})$ with $m > \frac{n}{2}$. Recall that $H^m(\mathcal{X})$ is the Sobolev class of functions on $\mathcal{X}$ whose $k$th-order weak partial derivatives up to order $m \geq 0$ are square integrable, i.e.,
\[
H^m(\mathcal{X}) \coloneqq \{u: \mathcal{X} \to \mathcal{X} \ | \ \partial_k u \in L^2(\mathcal{X}) \text{ for } k =0,1,...,m\} \ .
\]
From the Sobolev embedding theorem, we have that for $m > \frac{n}{2}$ and $0 < \eta < m - \frac{n}{2}$, $H^m(\mathcal{X}) \subseteq C^\eta(\mathcal{X})$ \cite[App. A.1]{Nickl2023}, where $C^\eta(\mathcal{X})$ is a H\"older space with the norm $\|u\|_{C^\eta}=\sum_{|k|\leq \eta} \| \partial_k u \|_\infty$. It follows from \cite[Eq. (A.4)]{Nickl2023} that any $u \in H^m(\mathcal{X})$ has finite $L^\infty(\mathcal{X})$ norm, since
\[
\| u \|_\infty \lesssim \| u \|_{C^\eta} \lesssim \|u\|_{H^m} \, .
\]
    

We can define Gaussian measures on $H^m(\mathcal{X})$ as follows. 

\begin{fact}
    \label{fact:measure}
    Let $\mathbb{U} = H^m(\mathcal{X})$ with $m \geq \frac{n}{2}$. For a Laplacian-like operator $\mathcal{A}$ densely defined on $\mathbb{U}$ that satisfies \cite[Assump.~2.9]{Stuart2010}, $\mu = \mathcal{N}(0,\mathcal{A}^{-\nu})$ with $\nu > \frac{n}{2}$ is a Gaussian measure on $\mathbb{U}$.
\end{fact}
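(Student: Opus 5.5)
The plan is to build $\mu$ from the spectral data of $\mathcal{A}$ as a Karhunen--Lo\`eve series and reduce the claim to a trace-class condition. By \cite[Assump.~2.9]{Stuart2010}, $\mathcal{A}$ has eigenpairs $\{(\lambda_k,\psi_k)\}_{k\geq 0}$ with $\{\psi_k\}$ an orthonormal basis and $\lambda_k \asymp |k|^{2/n}$ when the index is counted in one dimension (Weyl's law; this is the scalar form of $\lambda_k \asymp |k|^2$ for a multi-index $k\in\mathbb{Z}^n$). The covariance $\mathcal{C}=\mathcal{A}^{-\nu}$ is defined through \eqref{eq:Bnu}, and I would take the candidate random element
\[
u = \sum_{k\geq 0} \lambda_k^{-\nu/2}\, \xi_k\, \psi_k, \qquad \xi_k \overset{\text{iid}}{\sim} \mathcal{N}(0,1),
\]
as in \Cref{rmk:kl}. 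Every finite-dimensional projection of this series is Gaussian, so the law of the limit is Gaussian with covariance $\mathcal{C}$, provided the series converges in the relevant Hilbert space.

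For the convergence step I would first show that $\mathcal{C}$ is trace-class on the base space $L^2(\mathcal{X})$. One has $\operatorname{tr}\mathcal{C}=\sum_k \lambda_k^{-\nu}\asymp \sum_{k} k^{-2\nu/n}$, which is finite if and only if $\nu>n/2$. This is exactly the hypothesis, and it is the content of \cite[Thm.~6.6]{Stuart2010}. Then $\mathbb{E}\|u\|^2=\operatorname{tr}\mathcal{C}<\infty$, the partial sums form an $L^2(\Omega)$-Cauchy sequence, and the It\^o--Nisio theorem (or Kolmogorov's two-series argument) upgrades this to almost sure convergence. Hence $\mu$ is a Gaussian measure, and by \cite[Def.~2.2.1]{Bogachev1998} its pushforward under every continuous linear functional is Gaussian.

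To place $\mu$ on $\mathbb{U}=H^m(\mathcal{X})$ itself, I would identify $H^m$ with the Hilbert scale space $\mathbb{U}^{m}$ of \eqref{eq:hilbertscale}, since $\|u\|_{H^m}^2\asymp\sum_k\lambda_k^{m}|\langle u,\psi_k\rangle|^2$ under the boundary conditions encoded in \cite[Assump.~2.9]{Stuart2010}. Then
\[
\mathbb{E}\|u\|_{\mathbb{U}^s}^2=\sum_k \lambda_k^{s-\nu}<\infty \quad\text{iff}\quad \nu-s>n/2,
\]
which is \cite[Lemma~6.25/Thm.~6.28]{Stuart2010}. The main obstacle is that this gives $\mu(\mathbb{U}^s)=1$ only for $s<\nu-n/2$. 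So the fact, read with $\mathbb{U}=H^m$, requires $\nu>m+n/2$ (in the $H^m$-scaled sense), or else requires reinterpreting $\mathcal{A}$ as the Laplacian-like operator on the Hilbert scale shifted so that $H^m$ plays the role of the base space. I would handle this by working in the shifted scale: define $\mathcal{C}$ as self-adjoint on $\mathbb{U}$ with $\mathbb{U}$-eigenbasis $\lambda_k^{-m/2}\psi_k$. The trace on $\mathbb{U}$ is then again $\sum_k\lambda_k^{-\nu}$, finite iff $\nu>n/2$. I would also note that under the plain $L^2$-based reading the stated threshold is insufficient, and flag this as the delicate point.

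Finally, I would record the standard consequence that the Cameron--Martin space is $\operatorname{Im}(\mathcal{C}^{1/2})=\mathbb{U}^{\nu}$ in the relevant scale. This space embeds continuously into $\mathbb{U}$, so for $m>n/2$ the perturbations lie in $L^\infty$ and \Cref{lem:controlspace} applies. Only the strict inequality $m>n/2$ is needed for the Sobolev embedding, and the borderline case $m=n/2$ allowed in the statement would only give a measure on $\mathbb{U}$, not boundedness.
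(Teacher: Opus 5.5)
Your argument is correct and is essentially the paper's. The paper invokes \cite[Lemma~6.27]{Stuart2010} with $s=0$ in the Hilbert scale built on $\mathbb{U}$ itself: it takes $\{\psi_k\}$ orthonormal in $\mathbb{U}$, so $\mathbb{U}^0=\mathbb{U}$. That is exactly your Karhunen--Lo\`eve/trace computation $\sum_k\lambda_k^{-\nu}<\infty \iff \nu>n/2$ under your ``shifted scale'' reading.

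Your caveat is well taken, and it is a point the paper does not address. If the covariance $\mathcal{A}^{-\nu}$ is understood relative to the $L^2$ inner product, as in the paper's finite-element discretization $\bm{C}=(\bm{M}^{-1}\bm{S})^{-\nu}$, then draws lie in $H^s$ only for $s<\nu-n/2$. Membership in $H^m$ would then need $\nu>m+n/2$.
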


This fact follows from \cite[Lemma 6.27]{Stuart2010}, which states that under these assumptions, $u \sim \mu$ is in the Hilbert scale space $\mathbb{U}^s$ \eqref{eq:hilbertscale} almost surely for $s \in [0, \nu - \frac{n}{2})$. For $\nu > \frac{n}{2}$, we see that $u \in \mathbb{U}^0 = \mathbb{U}$ for $\mu$-almost every $u$. Since $\nu$ controls the smoothness of draws $u$ from $\mu$, the condition $\nu > \frac{n}{2}$ ensures that the perturbation fields have sufficient regularity to belong to $H^m(\mathcal{X})$ almost surely.

A covariance operator which meets these assumptions is the regularized Laplacian
\begin{equation}
    \label{eq:laplacian}
    \mathcal{A} = -\gamma \Delta + \delta I \ ,
\end{equation}
where the hyperparameters $\gamma > 0 $ and $\delta > 0$ determine the correlation length $\rho$ and pointwise variance $\zeta$ of Gaussian processes $u$ whose law is $\mu = \mathcal{N}(0,\mathcal{A}^{-\nu})$. In particular, $\rho$ is proportional to $\sqrt{\gamma / \delta}$ and $\zeta$ is proportional to $\delta^{-\nu} \rho^{-n}$ \cite{Villa2021}, with the exact expressions derived in \cite{Lindgren2011}.

\subsubsection{Fr\'echet derivative of path space observables}
\label{sec:fd_pathobs}

Recall that for drift perturbations $u \in \mathbb{U}$ where $\mathbb{U}$ is a Hilbert space with properties specified in \Cref{lem:controlspace}, the perturbed SDE has a solution with law $\mathbb{P}^u$ and the path space observable $F(u) \coloneqq \mathbb{E}^u[\varphi_\tau]$, as introduced in \eqref{eq:observable}, is well-defined for $\varphi_\tau \in L^1(\mathbb{P}^u)$. In \Cref{lem:frechet}, we specify additional assumptions which are sufficient to show that the $n$th-order Fr\'echet derivative $D^nF(u)$ for $n \in \mathbb{N}$ exists and has an analytical form.

\begin{lemma}[Fr\'echet derivative of the path space observable]
    \label{lem:frechet}
    Assume \Cref{cond:driftdiff} holds and let $\mathbb{U}$ be specified as in \Cref{lem:controlspace}. For $u \in \mathbb{U}$, let $F(u)$ be the path space observable \eqref{eq:observable} where $\varphi_\tau$ is in $L^2(\mathbb{P}^u, \mathcal{Y})$ and satisfies \Cref{cond:stoptime}. Then the first-order Fr\'echet derivative of $F$ at $u \in \mathbb{U}$ in the direction $v \in \mathbb{U}$ is given by
    \begin{equation}
        \label{eq:frechet}
        D^nF(u)(v) \coloneqq \mathbb{E}^u \big[ \varphi_\tau M^{v}_\tau \big] \ ,
    \end{equation}
    where for any stochastic process $Y = (Y_t)_{t \in [0,\tau]}$ and $v \in \mathbb{U}$, $M^v_\tau$ is the local martingale
    \begin{equation}
        \label{eq:martingale}
        M^v_\tau(Y) = \int_0^\tau v(Y_s)^\top \upd W_s \ .
    \end{equation}
\end{lemma}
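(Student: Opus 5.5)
We will obtain the formula from a change of measure. By Girsanov's theorem we rewrite $F(u+\epsilon v)$ as an expectation under the unperturbed law $\mathbb{P}^u$, and then differentiate the density in $\epsilon$ at $\epsilon=0$.

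\textbf{Step 1: the density.} Fix $u \in \mathbb{U}$ and a direction $v \in \mathbb{U}$, and let $W$ be the Brownian motion driving $X^u$. Since $\mathbb{U} \hookrightarrow L^\infty(\mathcal{X})$ by \Cref{lem:controlspace}, the process $v(X^u_s)$ is bounded, so Novikov's condition holds on $[0,T]$. Define
\[
Z^{\epsilon}_\tau \coloneqq \exp\Bigl( \epsilon M^v_\tau - \tfrac{\epsilon^2}{2} \int_0^\tau \|v(X^u_s)\|^2 \, \upd s \Bigr), \qquad M^v_\tau = \int_0^\tau v(X^u_s)^\top \upd W_s .
\]
Under $Z^\epsilon_\tau \upd P$, the process $\widetilde W_t = W_t - \epsilon\int_0^{t} v(X^u_s)\,\upd s$ is a Brownian motion on $[0,\tau]$. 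Hence $X^u$ solves \eqref{eq:sde} with drift $b+\sigma(u+\epsilon v)$ up to time $\tau$. Two facts make this valid: $\tau \le T$ by \Cref{cond:stoptime}, and $\varphi_\tau$ depends only on the path on $[0,\tau]$, i.e. it is $\mathcal{F}_\tau$-measurable. By uniqueness in law from \Cref{lem:controlspace},
\[
F(u+\epsilon v) = \mathbb{E}^u\bigl[\varphi_\tau Z^\epsilon_\tau\bigr].
\]

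\textbf{Step 2: Gateaux derivative.} We need
\[
\epsilon^{-1}\bigl(F(u+\epsilon v)-F(u)\bigr) - \mathbb{E}^u[\varphi_\tau M^v_\tau] = \mathbb{E}^u\bigl[\varphi_\tau\bigl(\epsilon^{-1}(Z^\epsilon_\tau - 1) - M^v_\tau\bigr)\bigr] \to 0 .
\]
By Cauchy--Schwarz and $\varphi_\tau \in L^2(\mathbb{P}^u)$, it suffices to show that $\epsilon^{-1}(Z^\epsilon_\tau-1) - M^v_\tau \to 0$ in $L^2$. Write the remainder using $\partial_\epsilon Z^\epsilon = Z^\epsilon\bigl(M^v_\tau - \epsilon \langle v \rangle_\tau\bigr)$, where $\langle v\rangle_\tau \coloneqq \int_0^\tau \|v(X^u_s)\|^2\,\upd s$. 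Then bound it with the uniform exponential-moment estimate
\[
\mathbb{E}\bigl[(Z^\epsilon_\tau)^p\bigr] \le \exp\bigl(c\, p^2 \epsilon^2 T \|v\|_\infty^2\bigr),
\]
together with the BDG bound $\mathbb{E}|M^v_\tau|^q \lesssim (T\|v\|_\infty^2)^{q/2}$. This gives a remainder of order $O(\epsilon\|v\|_\infty^2)$ in $L^2$. The resulting derivative $v \mapsto \mathbb{E}^u[\varphi_\tau M^v_\tau]$ is linear in $v$. It is also bounded:
\[
\bigl|\mathbb{E}^u[\varphi_\tau M^v_\tau]\bigr| \le \|\varphi_\tau\|_{L^2}\, \sqrt{T}\,\|v\|_\infty \lesssim \|v\|_{\mathbb{U}},
\]
using the Itô isometry and the embedding.

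\textbf{Step 3: Fréchet upgrade (main obstacle).} To pass from Gateaux to Fréchet differentiability, we show the remainder bound is uniform over $\|v\|_{\mathbb{U}}\le 1$. Take $\epsilon = \|h\|$ and $v = h/\|h\|$. All constants in Step 2 depend on $v$ only through $\|v\|_\infty \lesssim \|v\|_{\mathbb{U}}$, so the remainder is $o(\|h\|_{\mathbb{U}})$ uniformly in direction.

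The delicate points are twofold. First, the martingale and its exponential moments must be controlled uniformly; the boundedness of $\mathbb{U}$-elements is exactly what permits this. Second, the statement's reading with $D^n$ refers to the first-order case. I would add a remark that higher orders follow by iterating the expansion of $Z^\epsilon$, which produces Hermite-polynomial functionals of $(M^v_\tau, \langle v\rangle_\tau)$.
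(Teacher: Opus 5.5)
Your argument is correct, but it does not follow the paper's route. The paper's proof is purely by citation. It checks that boundedness of elements of $\mathbb{U}$ (from \Cref{lem:controlspace}), the semi-ellipticity bound in \Cref{cond:driftdiff}, and $\tau\le T$ (from \Cref{cond:stoptime}) match Eq.~(22) and Assumptions~3.1--3.2 of \cite{Lie2021}. It then reads the formula off \cite[Lemma 3.6]{Lie2021}.

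You instead reprove the needed special case from scratch, in three steps:
\begin{enumerate}
\item a Girsanov change of measure with kernel $\epsilon v$, which is legitimate because the perturbation enters the drift as $\sigma v$;
\item uniqueness in law, which identifies $F(u+\epsilon v)=\mathbb{E}^u[\varphi_\tau Z^\epsilon_\tau]$;
\item $L^2$ control of $\epsilon^{-1}(Z^\epsilon_\tau-1)-M^v_\tau$ via exponential moments of the stochastic exponential and BDG.
\end{enumerate}

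Your route buys transparency and a quantitative conclusion. It yields $\|F(u+h)-F(u)-\mathbb{E}^u[\varphi_\tau M^h_\tau]\|_{\mathcal{Y}}\lesssim \|\varphi_\tau\|_{L^2(\mathbb{P}^u,\mathcal{Y})}\,T C_\infty^2\|h\|_{\mathbb{U}}^2$ for $\|h\|_{\mathbb{U}}\le 1$. Here $C_\infty$ is the constant of $\mathbb{U}\hookrightarrow L^\infty$, and the implied constant depends only on $TC_\infty^2$. This makes the Fr\'echet (not merely G\^ateaux) property explicit. It also shows exactly where the $L^\infty$ embedding and $\tau\le T$ are used.

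Your argument has two further gains. It shows that $F$ is finite near $u$ given only $\varphi_\tau\in L^2(\mathbb{P}^u,\mathcal{Y})$ at the single point $u$, since $\mathbb{E}^{u+h}\|\varphi_\tau\|_{\mathcal{Y}}\le\|\varphi_\tau\|_{L^2(\mathbb{P}^u,\mathcal{Y})}\|Z^h_\tau\|_{L^2}$. It also never uses the semi-ellipticity bound, which the paper invokes only to match \cite{Lie2021}. The citation, in turn, buys brevity and access to the broader framework of \cite{Lie2021}, notably the higher-order derivatives to which the $D^n$ in the statement alludes.

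One small tidy-up: define the new measure by $Z^\epsilon_T\,\upd P$ on $\mathcal{F}_T$. Then $\widetilde W$ is a Brownian motion on all of $[0,T]$, and uniqueness in law applies directly on $C([0,T],\mathbb{R}^n)$. Only afterwards replace $Z^\epsilon_T$ by $Z^\epsilon_\tau$, using optional stopping and the $\mathcal{F}_\tau$-measurability of $\varphi_\tau$.
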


\begin{proof}
    The condition in \Cref{lem:controlspace} ensures that $u \in \mathbb{U}$ is bounded, which is a requirement specified for the function space in \cite[Eq. (22)]{Lie2021}. The semi-ellipticity condition in \Cref{cond:driftdiff} corresponds to Assumption 3.1 of \cite{Lie2021}. Since the stopping time is bounded, it satisfies Assumption 3.2 of \cite{Lie2021}. The result then matches \cite[Lemma 3.6]{Lie2021}. 
\end{proof}
%

\subsubsection{Derivative-informed subspaces for path space observables}
\label{sec:as_pathobs}

In this section, we apply the framework of derivative-informed subspaces from \Cref{sec:as} to path space observables of SDEs of the form in \eqref{eq:observable}. For the derivative-informed subspace to be well-defined, a sufficient condition is to require additional properties of the path functional $\varphi_\tau$ with respect to a given Gaussian measure on $\mathbb{U}$, detailed in \Cref{cond:pathfunc}. 

\begin{condition}[Conditions on the path functional]
    \label{cond:pathfunc}
    Let $\mu$ be a Gaussian measure on $\mathbb{U}$. The path space observable $F$ in \eqref{eq:observable} is defined by a path functional $\varphi_\tau$ which
    
    \begin{enumerate}[label=\roman*.] 
    \item is $\mathbb{P}^u$-almost surely nonconstant for almost surely all $u \sim \mu$,
    \item satisfies \Cref{cond:stoptime},
    \item satisfies $\Big( \int_\mathbb{U} \lVert \varphi_\tau \rVert^4_{L^2(\mathbb{P}^u, \mathcal{Y}) } \mu (\upd u) \Big)^{\frac{1}{2}} < \infty$.
    \end{enumerate}
\end{condition}
As an example, \Cref{cond:pathfunc} is met for bounded mean first hitting time observables $\varphi_\tau(X^u) = \tau_\infty(X^u) \wedge T$ where $\tau$ is $\mathbb{P}^u$-almost surely nonconstant for $u \sim \mu$. 

We obtain \Cref{prop:condf} as our key result, implying that under the appropriate conditions, the path space observable of the perturbed SDE is a suitable functional for forming a derivative-informed subspace. The proof of \Cref{prop:condf} is in \Cref{app:condfpf}.

\begin{proposition}[Path-space observable admits a derivative-informed subspace]
    \label{prop:condf}
    Assume \Cref{cond:driftdiff} holds and let $\mathbb{U}$ be specified as in \Cref{lem:controlspace}. For $u \in \mathbb{U}$, let $F(u)$ be the path space observable defined in \eqref{eq:observable} where $\varphi_\tau$ satisfies \Cref{cond:pathfunc}. Then, $F(u)$ and $DF(u)$ satisfy \Cref{cond:f} for a given Gaussian measure $\mu$ on $\mathbb{U}$ and the path space sensitivity operator $\mathcal{K}|_{\mathcal{E}} \coloneqq \mathbb{E}_\mu[(D_\mathcal{E} F)^*D_\mathcal{E} F]$ on $\mathcal{E} \subset \mathbb{U}$ is self-adjoint, positive semidefinite, and trace-class. 
\end{proposition}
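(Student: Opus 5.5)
The plan is to verify the two parts of \Cref{cond:f}. First, $F \in C^1(\mathbb{U},\mathcal{Y})$. Second, the Fr\'echet derivative $DF$ lies in $L^2(\mu,\mathcal{L}(\mathbb{U},\mathcal{Y}))$. Once both hold, \Cref{lem:traceclass} immediately gives that $\mathcal{K}|_\mathcal{E}$ is self-adjoint, positive semidefinite, and trace-class.

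\textbf{Existence and continuity of $DF$.} Existence of $DF(u)$ and the representation $DF(u)(v) = \mathbb{E}^u[\varphi_\tau M^v_\tau]$ follow from \Cref{lem:frechet}. Its hypotheses hold because \Cref{cond:pathfunc}(iii) forces $\varphi_\tau \in L^2(\mathbb{P}^u,\mathcal{Y})$ for $\mu$-a.e.\ $u$, and (ii) gives \Cref{cond:stoptime}. For continuity of $u \mapsto DF(u)$ in operator norm, I would use Girsanov. Write $\mathbb{E}^u[\cdot] = \mathbb{E}^0[\cdot\, Z^u_\tau]$ with
\[
Z^u_\tau = \exp\Bigl(\int_0^\tau u(X^0_s)^\top \upd W_s - \half\int_0^\tau |u(X^0_s)|^2 \upd s\Bigr).
\]
Because $\tau \le T$ and $u$ is bounded through the embedding $\mathbb{U} \hookrightarrow L^\infty$, Novikov's condition holds. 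Moreover $Z^u_\tau$ has moments of all orders, and these are locally uniform in $\|u\|_\infty \lesssim \|u\|_\mathbb{U}$. The map $u \mapsto Z^u_\tau$ is then continuous in $L^p(\mathbb{P}^0)$, and combining this with Hölder's inequality gives continuity of $u\mapsto DF(u)$. The same argument shows that $F$ itself is continuous. Alternatively, this regularity can be cited from \cite{Lie2021}, where differentiability is established.

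\textbf{Square-integrability under $\mu$.} For a direction $v$, apply Cauchy--Schwarz under $\mathbb{P}^u$:
\[
\|DF(u)(v)\|_\mathcal{Y} \le \|\varphi_\tau\|_{L^2(\mathbb{P}^u,\mathcal{Y})}\, \bigl(\mathbb{E}^u[(M^v_\tau)^2]\bigr)^{1/2}.
\]
By the It\^o isometry and $\tau\le T$,
\[
\mathbb{E}^u[(M^v_\tau)^2] = \mathbb{E}^u\Bigl[\int_0^\tau |v(X^u_s)|^2\upd s\Bigr] \le T\|v\|_\infty^2 \lesssim T\|v\|_\mathbb{U}^2.
\]
Here $M^v$ is a true martingale since $v$ is bounded. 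It follows that $\|DF(u)\|_{\mathcal{L}(\mathbb{U},\mathcal{Y})} \lesssim \sqrt{T}\,\|\varphi_\tau\|_{L^2(\mathbb{P}^u,\mathcal{Y})}$. Squaring and integrating against $\mu$, then applying Jensen or Cauchy--Schwarz on $\mu$, gives
\[
\int \|DF(u)\|^2\,\mu(\upd u) \lesssim T\Bigl(\int \|\varphi_\tau\|^4_{L^2(\mathbb{P}^u)}\,\mu(\upd u)\Bigr)^{1/2} < \infty,
\]
which is exactly \Cref{cond:pathfunc}(iii). Condition (i) is not needed for \Cref{cond:f}. It only serves to exclude the trivial case $DF\equiv 0$, and I would remark on that.

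\textbf{Main obstacle.} The delicate step is the $C^1$ claim. It requires continuity of $u\mapsto DF(u)$ in operator norm, uniformly over unit directions $v$, which calls for careful Girsanov-density estimates with stopping times. The plan is to bound $\sup_{\|v\|\le1}|\mathbb{E}^0[\varphi_\tau M^v_\tau(Z^{u'}_\tau - Z^u_\tau)]|$. For this I would use BDG applied to the stochastic integral part of $M^v$ (which may need rewriting under $\mathbb{P}^0$ with a drift correction term bounded by $T\|v\|_\infty\|u\|_\infty$), together with $L^p$ continuity of the densities, which follows from $\|u'-u\|_\infty\to0$.
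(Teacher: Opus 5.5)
Your proposal is correct. The square-integrability estimate and the closing appeal to \Cref{lem:traceclass} match the paper's proof (\Cref{lem:bounded} followed by \eqref{eq:app4}). The only cosmetic difference is that you bound $\mathbb{E}^u[\tau]\le T$ before integrating against $\mu$. Jensen then suffices, where the paper applies Cauchy--Schwarz to the pair $\|\varphi_\tau\|^2_{L^2(\mathbb{P}^u,\mathcal{Y})}$, $\mathbb{E}^u[\tau]$. Your remark that item (i) of \Cref{cond:pathfunc} is never used is also consistent with the paper.

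The genuine difference is the $C^1$ claim. The paper deduces $F\in C^1(\mathbb{U},\mathcal{Y})$ directly from \Cref{lem:bounded}. That lemma only shows that each $DF(u)$ is a bounded, hence continuous, linear map. It says nothing about continuity of $u\mapsto DF(u)$ into $\mathcal{L}(\mathbb{U},\mathcal{Y})$, which is what $C^1$ means.

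Your Girsanov argument supplies exactly this missing step. You write $DF(u)(v)$ under $\mathbb{P}^0$ with density $Z^u_\tau$. You track the $u$-dependence of $M^v_\tau$ through the drift correction $-\int_0^\tau v^\top(u'-u)\,\upd s$. You then combine $L^p(\mathbb{P}^0)$-continuity of $u\mapsto Z^u_\tau$ with BDG and H\"older, uniformly over $\|v\|_\mathbb{U}\le 1$. As a by-product, this also gives the measurability of $u\mapsto DF(u)$ that is implicit in $DF\in L^2(\mu,\mathcal{L}(\mathbb{U},\mathcal{Y}))$. It costs some stochastic-analysis bookkeeping, but it proves the regularity that the paper only asserts.

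When you write it up, make one point explicit. $C^1$ concerns every $u\in\mathbb{U}$, whereas item (iii) only yields $\varphi_\tau\in L^2(\mathbb{P}^u,\mathcal{Y})$ for $\mu$-a.e.\ $u$. That set need not contain any prescribed point (e.g.\ $u=0$, since $\mu(\mathcal{E})=0$), so \Cref{lem:frechet} cannot be invoked verbatim everywhere.

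Your framework repairs this. For bounded $u,u_0$ and $\tau\le T$, the densities $\upd\mathbb{P}^u/\upd\mathbb{P}^{u_0}$ have moments of all orders. Hence integrability at a single $u_0$ gives $\varphi_\tau\in L^q(\mathbb{P}^u,\mathcal{Y})$ for all $q<2$ and all $u$. With H\"older in place of Cauchy--Schwarz, the same Girsanov expansion then yields both differentiability and continuity of $DF$ at every point.
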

Hence, the \textit{path-space sensitivity operator} $\mathcal{K}|_\mathcal{E}$ in \Cref{prop:condf} inherits desirable properties established in \Cref{sec:as} for general derivative-informed subspaces; in particular, that its eigenfunctions are ordered in terms of the mean squared sensitivity of the observable, by \Cref{lem:directderiv}, and they admit low-rank approximations of the observable with the error guarantees of \Cref{prop:di}. 

\subsection{Discretization of the dimension reduction problem}
\label{sec:discretization}

We now discuss the numerical discretization of the derivative-informed dimension reduction approach. In practice, elements of the infinite-dimensional space of perturbation functions, $\mathbb{U}$, are approximated by their projection onto a finite-dimensional subspace of $\mathbb{U}$. We focus our attention on finite element subspaces which provide consistent approximations of elements of $\mathbb{U}$.

\subsubsection{Finite element basis}
\label{sec:fe}

Consider the space of finite element (FE) discretizations $\mathbb{U}_h \subset H^1(\mathcal{X})$ for $n \in \{1,2\}$, defined by a mesh size $h > 0$, a set of nodal points $\{x_j\}_{j=1}^J$, and a nodal basis of continuous Lagrange functions $\{\phi_j\}_{j=1}^J$ satisfying $\phi_j(x_i) = \delta_{ij}$ for $i,j \in \{1,...,J\}$. For every $u \in H^1(\mathcal{X})$, the FE approximation
\[
u_h(\bm{\alpha}) = \sum_{j=1}^J \alpha_j \phi_j
\]
is fully characterized by its nodal coefficient vector $\bm{\alpha} = (\alpha_1,...,\alpha_J)$. The FE approximation has the representation $u_h = P_hu$, where $P_h: H^1(\mathcal{X}) \to \mathbb{U}_h$ is a bounded linear projection onto $\mathbb{U}_h$. It is a consistent discretization in the sense that under regular mesh refinement, $u_h \to u$ in $H^1(\mathcal{X})$ as $h \to 0$; see \cite{BrennerScott2008}.

The FE subspace of $L^2(\mathcal{X})$ specified in \cite{BuiThanh2013,Villa2021} is also an FE subspace of $H^1(\mathcal{X})$, as the Lagrange basis functions are bounded and their weak derivatives are in $L^2(\mathcal{X})$. The infinite-dimensional $L^2$-inner product between two functions is approximated by the $\bm{M}$-weighted Euclidean inner product between their nodal coefficient vectors, where the FE mass matrix $\bm{M} \in \mathbb{R}^{J \times J}$ is
\begin{equation}
    \label{eq:massmatrix}
    \bm{M}_{ij} = \int_{\mathcal{X}} \phi_i(x)\phi_j(x) \upd x , \quad i,j \in \{1,...,J\} \ .
\end{equation}
Then, one has that $\la u_1,u_2 \ra_{\mathbb{U}} \approx \la u_{1h},u_{2h} \ra_{L^2} = \la \bm{\alpha}_1, \bm{\alpha}_2 \ra_{\bm{M}} = \bm{\alpha}_1^\top \bm{M} \bm{\alpha}_2$. We let $\mathbb{R}^J_{\bm{M}}$ denote $\mathbb{R}^J$ equipped with the $\bm{M}$-weighted inner product.

\subsubsection{Discrete representation of the derivative-informed basis}
\label{sec:galerkin}

In the following, we derive matrix representations of infinite-dimensional operators on $\mathbb{U}$ relevant to the dimension reduction problem using Galerkin projections onto the FE subspace. 

\medskip 

\noindent \textit{Discrete representation of $\mathcal{C}$.}
The covariance of the Gaussian measure $\mu$ is taken to be the fractional Laplacian operator $\mathcal{A}^{-\nu}$, where $\mathcal{A}$ is defined in \eqref{eq:laplacian}. The matrix representation $\bm{A}:\mathbb{R}^J_{\bm{M}} \to \mathbb{R}^J_{\bm{M}}$ of $\mathcal{A}$ satisfies
    \begin{equation}
        \label{eq:aoperator}
        \int_{\mathcal{X}} \phi_i(x) \mathcal{A} \phi_j(x) \upd x = \la \bm{e}_i, \bm{A} \bm{e}_j \ra_{\bm{M}} \ ,
    \end{equation}
where $\bm{e}_i$ is the nodal coefficient vector corresponding to $\phi_i$. For $i,j \in \{1,...,J\}$, the stiffness matrix $\bm{S}$ is defined as
    \begin{equation}
        \label{eq:stiffmatrix}
        \begin{aligned}
        (\bm{S})_{ij} & = \int_{\mathcal{X}} \phi_i(x) \mathcal{A} \phi_j(x) \upd x = \int_{\mathcal{X}} \phi_i(x) \big( -\gamma \Delta + \delta I \big) \phi_j(x) \upd x \\
        & = \gamma \int_{\mathcal{X}} \nabla \phi_i(x) \cdot \nabla \phi_j(x) \upd x + \delta \int_{\mathcal{X}} \phi_i(x) \phi_j(x) \upd x \ .
        \end{aligned}
    \end{equation}
The stiffness matrix $\bm{S}$ corresponds to the bilinear form and the mass matrix $\bm{M}$ represents the linear functional in the weak formulation of $\mathcal{A}$. Therefore, one can show that the discrete operator $\bm{A} = \bm{M}^{-1} \bm{S}$ is the matrix representation of $\mathcal{A}$ with respect to the $\bm{M}$-weighted inner product. It follows that the matrix representation of the covariance $\mathcal{C} = \mathcal{A}^{-\nu}$ is $\bm{C} = \bm{A}^{-\nu}$.  

\medskip

\noindent \textit{Discrete representation of $\mathcal{K}$.}
For $u \in \mathbb{U}$, we approximate $F(u)$ by $F(u_h(\bm{\alpha}))$, where $u_h = P_hu$. 
Using \Cref{lem:frechet}, the partial derivative of $F(u_h(\bm{\alpha}))$ with respect to the $i$th nodal coefficient is 
\[
    \partial_{\alpha_i}F(u_h(\bm{\alpha})) = DF(u_h)(\partial_{\alpha_i}u_h) = DF(u_h)(\phi_i) =\mathbb{E}^{u_h}[\varphi_\tau M^{\phi_i}_\tau] \, ,
\]
since $\partial_{\alpha_i} u_h = \phi_i$. We let $G(u) = \nabla_{\bm{\alpha}} F(u_h(\bm{\alpha}))$ denote the $J$-dimensional gradient of the path space observable defined on the FE subspace. The Galerkin projection of the sensitivity operator $\mathcal{K}$ \eqref{eq:egop} onto the FE subspace yields the matrix representation $\bm{M}^{-1} \bm{K}$ where $\bm{K}$ is given by
\begin{equation}
    \label{eq:egop_discrete}
    \bm{K} = \mathbb{E}_{u \sim \mu}[G(u)G(u)^\top] \ .
\end{equation}
In practice, the sensitivity matrix is computed using Monte Carlo estimation for both the inner and outer expectations. The gradient, given by the inner expectation with respect to the path measure $\mathbb{P}^{u_h}$, is approximated by the $l$-sample Monte Carlo estimator
\begin{equation}
    \label{eq:grad_mc}
     G(u) \approx \hat{G}_l(u) \coloneqq \frac{1}{l} \sum_{i=1}^l \varphi_\tau(X^{(i)})M^\Phi_\tau (X^{(i)}), \quad X^{(i)} \sim \mathbb{P}^{u_h},
\end{equation}
where $\Phi=(\phi_1,...,\phi_J)$ corresponds to the Jacobian matrix of $u_h$ with respect to $\bm{\alpha}$. The sensitivity matrix, given by the outer expectation with respect to the Gaussian measure $\mu = \mathcal{N}(0,\mathcal{A}^{-\nu})$, is approximated by the $m$-sample Monte Carlo estimator 
\begin{equation}
\label{eq:egop_mc}
 \bm{K} \approx \hat{\bm{K}}_{l,m} \coloneqq \frac{1}{m} \sum_{j=1}^m \hat{G}_l(u^{(j)}) \hat{G}_l(u^{(j)})^\top, \quad u^{(j)} \sim \mu.
\end{equation}

\noindent \textit{Discrete representation of the eigenvalue problem.}
The finite-dimensional approximation of the eigenvalue problem \eqref{eq:geneig} is the generalized matrix eigenvalue problem
\begin{equation}
    \label{eq:geneig_discrete}
    \hat{\bm{K}}_{l,m} \bm{\psi}^{(i)} = \lambda^{(i)} \bm{M} \bm{C}^{-1} \bm{\psi}^{(i)},
\end{equation}
where $\bm{\psi}^{(i)} \in \mathbb{R}^J_{\bm{M}}$ denotes the nodal coefficient vector corresponding to the eigenfunction $\psi_{ih} \in \mathbb{U}_h$ for $i = 1,..., J$. The orthogonality condition $\la \psi_i, \psi_j \ra_\mathcal{E} = \delta_{ij} \ \forall \ i,j \in \{1,...,J\}$ is equivalent to $ \Psi^\top \bm{M} \bm{C}^{-1} \Psi = \mathbb{I}$ in the nodal coefficient space, since $\la \cdot, \cdot \ra_\mathcal{E} = \la \mathcal{C}^{-\half} \cdot, \mathcal{C}^{-\half} \cdot \ra_{\mathbb{U}} = \la \cdot, \mathcal{C}^{-1} \cdot \ra_{\mathbb{U}}$ for nondegenerate $\mathcal{C}$ and 
\[
\la \psi_i, \psi_j \ra_\mathcal{E} = \la \psi_i, \mathcal{C}^{-1} \psi_j \ra_{\mathbb{U}} \approx \la  \psi_{ih}, \mathcal{C}^{-1} \psi_{jh} \ra_{L^2} = \la \bm{\psi}^{(i)}, \bm{C}^{-1} \bm{\psi}^{(j)} \ra_{\bm{M}} \, .
\]

\medskip

\noindent \textit{Discrete representation of the projector.}
The projection operator $P^\text{DI}_r|_\mathcal{E}$ from \Cref{sec:as} can be represented in the FE subspace as follows. For $u \in \mathcal{E}$ with FE approximation $u_h \in \mathbb{U}_h$ 
and nodal coefficients $\bm{\alpha} \in \mathbb{R}^J_{\bm{M}}$,
\begin{equation}
    \begin{aligned}
        P^\text{DI}_r|_\mathcal{E} u 
        & = \sum_{i=1}^r \psi_{i} \langle \psi_{i}, u \rangle_\mathcal{E} \approx \sum_{i=1}^r \psi_{ih} \langle \psi_{ih}, \mathcal{C}^{-1} u_h \rangle_{L^2} \\
        & = \sum_{i=1}^r \bm{\psi}^{(i)} \langle \bm{\psi}^{(i)}, \bm{C}^{-1}\bm{\alpha} \rangle_{\bm{M}} = \sum_{i=1}^r \bm{\psi}^{(i)} (\bm{\psi}^{(i)})^\top \bm{M} \bm{C}^{-1} \bm{\alpha} = \Psi_r \Psi_r^\top \bm{M} \bm{C}^{-1} \bm{\alpha}.
    \end{aligned}
\end{equation}
Therefore, the matrix representation of the projector is 
$\bm{P}^\text{DI}_r|_\mathcal{E} = \Psi_r \Psi_r^\top \bm{M} \bm{C}^{-1}$.

\subsection{Control variate estimation of the derivative-informed subspace}
\label{sec:controlvariate}

Numerical estimation of the derivative-informed basis is subject to variability arising from the Monte Carlo estimation in \eqref{eq:egop_mc}. We show that a control variate formulation of the Fr\'echet derivative of the path space observable leads to reduced variance estimation of the sensitivity matrix and, subsequently, reduced error in the rank-$r$ derivative-informed subspace. 

In \cite{Lie2021}, it is proposed that the local martingale in \eqref{eq:martingale} may be used to construct a control variate of the Fr\'echet derivative in \Cref{lem:frechet}. Since the martingale has the property that for any $v \in \mathbb{U}$, $\mathbb{E}[M^v_\tau]=0$ \cite[(3.19)]{Pavliotis2014}, it follows that for any $u \in \mathbb{U}$, $\mathbb{E}^u[M^v_\tau]=0$. Thus, for any $\beta \in \mathbb{R}$, the quantity $\mathbb{E}^u[\varphi_\tau M^v_\tau] + \beta \mathbb{E}^u[M^v_\tau]$ is an unbiased estimator of $DF(u)(v) = \mathbb{E}^u[\varphi_\tau M^v_\tau]$. Since $M^v_\tau$ is already computed in the course of calculating $\varphi_\tau M^v_\tau$ and is correlated by construction, it is a natural choice of control variate that reduces variance of the estimator at no additional cost. Using the finite element discretization scheme from \Cref{sec:discretization}, a control variate estimator of the gradient of the observable, $\hat{G}_l^{\textup{CV}}:\mathbb{U} \to \mathbb{R}^J$, is defined as
\begin{equation}
    \label{eq:grad_cv}
    \hat{G}_l^{\textup{CV}}(u) \coloneqq \frac{1}{l} \sum_{i=1}^l \big( \varphi_\tau(X^{(i)})M^\Phi_\tau (X^{(i)}) \big)  + \beta \frac{1}{l} \sum_{i=1}^l M^\Phi_\tau (X^{(i)}), \quad X^{(i)} \sim \mathbb{P}^{u_h}.
\end{equation}
By expanding $\text{Var}_{\mathbb{P}^u}(\varphi_\tau M^v_\tau + \beta M^v_\tau)$, one can show that the optimal choice of $\beta$ for a minimum variance estimator is given by
\begin{equation}
    \label{eq:optimalbeta}
    \beta^* = \frac{\text{Cov}_{\mathbb{P}^u}(\varphi_\tau M^v_\tau, M^v_\tau)}{\text{Var}_{\mathbb{P}^u}(M^v_\tau)} \, , \quad  \text{Var}_{\mathbb{P}^u}(M^v_t)=\mathbb{E}^u \Big[ \int_0^\tau |v(X^u_s)|^2 \upd s \Big] ,
\end{equation}
where the variance of $M^v_\tau$ follows from It\^o isometry \cite[(3.21)]{Pavliotis2014}. A control variate-based estimator of the sensitivity matrix in \eqref{eq:egop} is then given by
\begin{equation}
    \label{eq:egop_cv}
    \hat{\bm{K}}^\textup{CV}_{l,m} \coloneqq \frac{1}{m} \sum_{j=1}^m \hat{G}^\textup{CV}_l(u^{(j)}) \hat{G}^\textup{CV}_l(u^{(j)})^\top, \quad u^{(j)} \sim \mu.
\end{equation}
While matrix-valued control variates for the sensitivity matrix in the outer expectation can be constructed (see \cite{Maurais2023}), they require additional samples of $u \sim \mu$ and hence additional numerical simulations of the SDE to obtain a useful finite-sample estimate of the control variate mean. Since the simulation of SDE paths dominates the cost of computing the derivative-informed subspace, such estimators are less practical compared to \eqref{eq:egop_cv}.







\section{Numerical studies}
\label{sec:numerics}

In the following numerical examples, we demonstrate the computation of derivative-informed (DI) subspaces of path-space observables of SDEs using FE discretizations of the infinite-dimensional problem. For this task we use hIPPYlib \cite{Villa2021}, a Python library for infinite-dimensional Bayesian inverse problems that is built upon the finite element infrastructure of FEniCS \cite{Alnaes2015}. In particular, FEniCS is used to define the FE mesh on which the covariance, sensitivity, and projection operators are discretized via Galerkin projections, while hIPPYlib provides code for sampling from Gaussian measures on FE subspaces of Hilbert spaces.

We study perturbed SDEs of the form 
    \begin{equation}
        \label{eq:controllangevin}
        \upd X^u_t = (-\nabla V + u)(X^u_t) \, \upd t + \upd W_t, \qquad X_0 = x \ ,
    \end{equation}
where $u \sim \mu = \mathcal{N}(0,\mathcal{A}^{-\nu})$ and $\mathcal{A}$ is as defined in \eqref{eq:laplacian}. For SDEs defined on $\mathbb{R}$ or $\mathbb{R}^2$, we take $\nu=2$ to satisfy the condition in \Cref{fact:measure} and also to obtain convenient forms of the square root operators as  $\mathcal{C}^{\half}=\mathcal{A}^{-1}$ and $\mathcal{C}^{-\half}=\mathcal{A}$. 

In both examples, we compare the approximation of the path-space observable using the DI subspace, derived from the operator pencil $(\mathcal{K},\mathcal{C}^{-1})$ \eqref{eq:geneig}, to that of the KL expansion, derived from $\mathcal{C}$ \eqref{eq:eig_pca}. The subspace approximation of $F$ is taken to be $F_r(u) = F(P_ru)$ for $u$ and projector $P_r$ on $\mathbb{U}$. We assess the quality of the subspace approximation using the mean squared error (MSE) in the observable and the Kolmogorov-Smirnov statistic between the pushforward distributions $F_\sharp \mu$ and $(F_r)_\sharp \mu$, which are defined in \Cref{app:errormetrics}.

\subsection{Double well potential}
\label{sec:num_dw}

Consider the asymmetric double well potential on $\mathbb{R}$, 
    \begin{equation}
        \label{eq:dw}
        V(x) = x^4 - 2x^2 + \frac{1}{2} \ \Big( \frac{1}{3} x^3 + x^2 + x \Big) \, .
    \end{equation}
We define the target path-space observable to be the mean first hitting time (MFHT) given a fixed initial condition and hitting domain $D = (1,\infty)$,
\begin{equation}
    \label{eq:bdmfht}
    F(u) = \mathbb{E}^u[\tau], \quad \tau = \inf\{t \geq 0|X_t \in D, X_0=-1\} \wedge T \ .
\end{equation}
The maximum time $T=60$ is chosen to be sufficiently large relative to the mean value of $F_\sharp\mu$. The potential and the hitting domain are illustrated in the top right panel of \Cref{fig:dw}.
    \begin{figure}[htbp]
        \centering
        \includegraphics[width=0.9\textwidth]{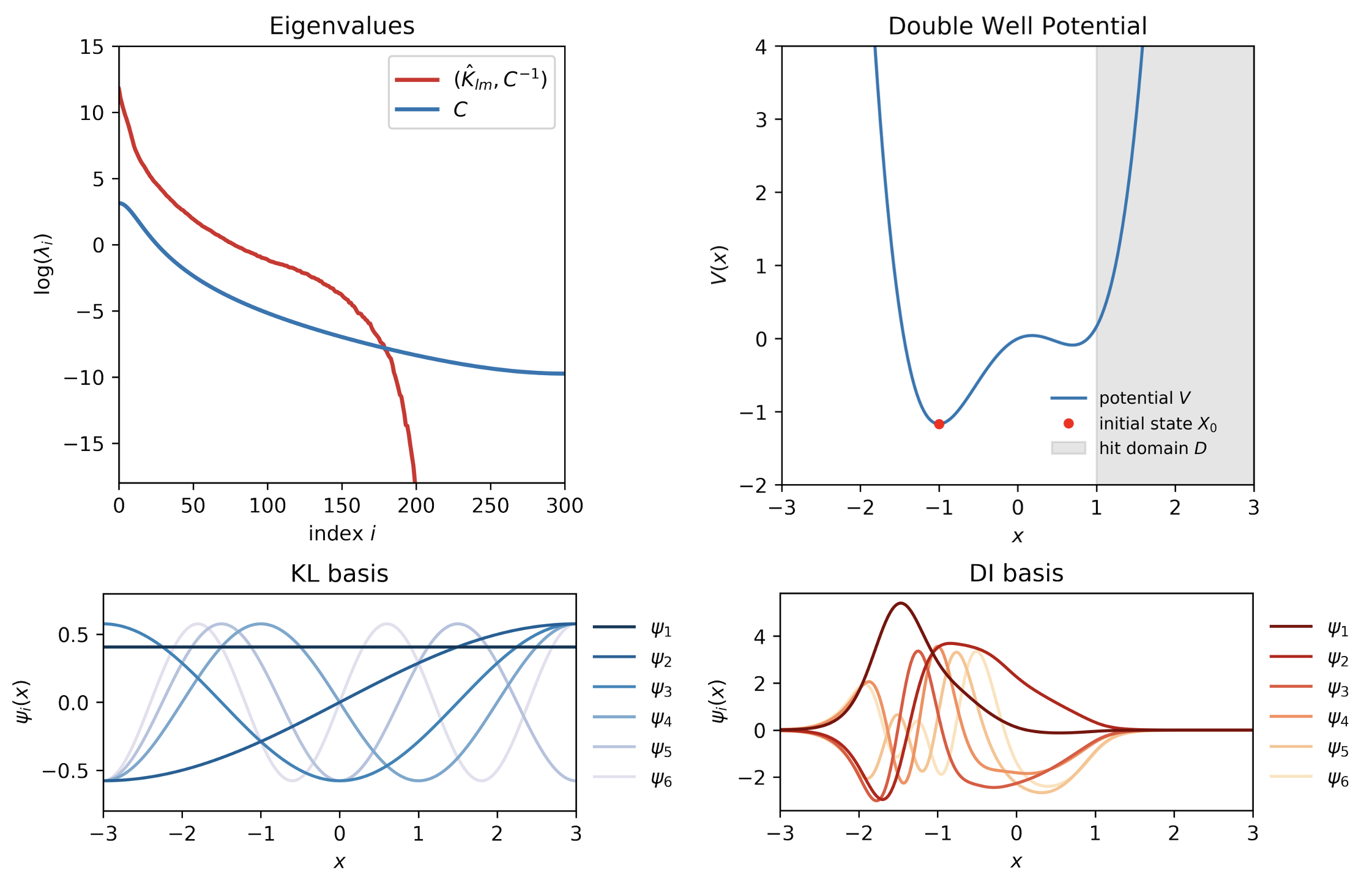}
        \caption{Top right: Asymmetric double well potential of \eqref{eq:dw}. The path-space observable is the mean first time within $[0,T]$ that paths starting at $X_0=-1$ hit the domain $D = (1,\infty)$. Top left: Eigenvalues of $(\bm{K},\bm{C}^{-1})$, corresponding to the DI basis, and of $\bm{C}$, corresponding to the KL basis. Bottom left: First six eigenfunctions of $\bm{C}$. Bottom right: First six eigenfunctions of $(\bm{K},\bm{C}^{-1})$.}
        \label{fig:dw}
    \end{figure}
    
We define a finite element mesh on the spatial domain $[-3,3]$ with homogeneous Neumann boundary conditions, as the dynamics of the SDE rarely extend beyond this domain. The mesh is constructed with $d=300$ elements, corresponding to $h = 0.02$. The parameters $\gamma, \delta$ of the Laplacian operator defining the measure $\mu$ are computed such that draws from $\mu$ are Gaussian processes with variance $\zeta=40$ and correlation $\rho=0.5$. The Monte Carlo estimator of the sensitivity operator in \eqref{eq:egop_mc} is computed using $m=10^4$ independent drift perturbations $u^{(1)},...,u^{(m)} \sim \mu$. For each $u^{(j)}$, we evaluate the path-space observable \eqref{eq:bdmfht} and its Fr\'echet derivative using $l=10^3$ sample paths associated with the path measure $\mathbb{P}^{u^{(j)}}$. To control variance of the estimator and facilitate the comparison of sample-based estimators, we use the same set of $l$ Brownian motion realizations to simulate paths of the perturbed SDE for each draw of $u$; this strategy ensures that differences in estimators arise only from variation in $u$ and not from the Brownian motion realization. 

We compare the spectral decomposition of $\bm{C}$ for deriving the KL basis and of $(\hat{\bm{K}}_{lm},\bm{C}^{-1})$ for deriving the DI basis. In the top left panel of \Cref{fig:dw}, we see that the eigenvalues of the DI basis exhibit a much sharper decay leading up to $i=200$ compared to those of the KL basis. Following \Cref{lem:directderiv}, this rapid spectral decay implies that the sensitivity of the observable to drift perturbations is captured within a low-dimensional subspace of the perturbations.

The bottom panels of \Cref{fig:dw} display the first six eigenfunctions of the KL and DI bases. In the one-dimensional setting considered here, the Laplacian-based covariance operator $\mathcal{C} = \mathcal{A}^{-2}$ defined on $L^2(\mathcal{X})$ with $\mathcal{X} \subseteq \mathbb{R}$ admits closed-form eigenvalues and eigenfunctions, which we provide in \Cref{app:eigfuncs}. The numerically computed KL eigenfunctions agree with their closed-form expressions up to sign and discretization error. These eigenfunctions depend only on the covariance operator $\mathcal{C}$ and thus remain agnostic to the path-space observable. In contrast, the leading DI eigenfunctions exhibit a distinct spatial pattern where variation is concentrated within the interval from $x=-3$ to the hitting domain at $x=1$ and trends toward zero at the boundaries. The leading eigenfunctions are observable-informed in the sense that their shape reveals features of the drift function which more strongly influence the observable, such as the geometry of the potential about the initial condition. 
    \begin{figure}[htbp]
    \centering
    \includegraphics[width=0.9\textwidth]{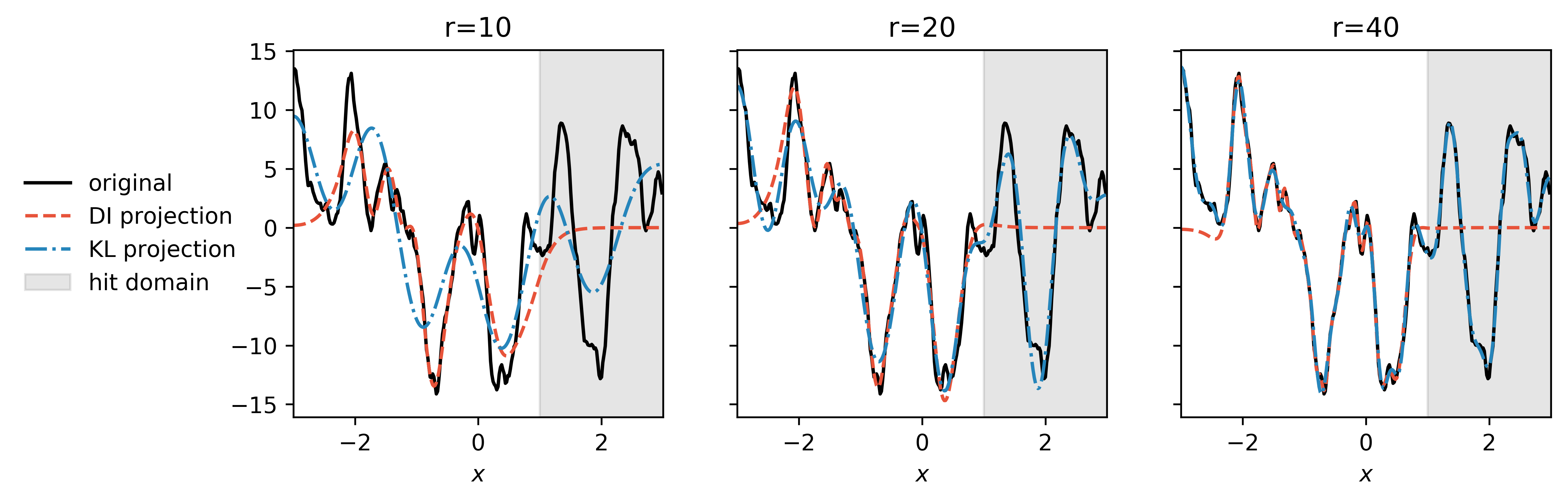}
    \caption{Projection of a sample drift perturbation onto $r$-dimensional subspaces of the KL basis and DI basis, for $r=[10, 20, 40]$ (left to right). }
    \label{fig:dw_proj}
    \end{figure}
    
To gain further insight, we illustrate the projection of a sample drift perturbation onto $r$-dimensional subspaces defined by the KL and DI bases in \Cref{fig:dw_proj}. By \Cref{prop:kl}, the KL subspace minimizes the $L^2(\mu)$ error in the rank-$r$ projection of the drift perturbation. This optimality is global with respect to $\mu$ and does not control error locally in the spatial domain, so the KL projection of an individual perturbation can exhibit local discrepancies, particularly for small $r$. The DI projection, on the other hand, has smaller local error with respect to the original perturbation field approximately on the interval $(-2.2,1)$, then tapers to zero near the boundaries and for $x>1$. Since the DI subspace is informed by the sensitivity of the observable to the perturbation, the DI projection neglects variation in the perturbation field in regions that the SDE trajectories rarely visit or that lie beyond the termination point.

After identifying the subspaces, we compute the KL and DI approximations of the mean first hitting time (MFHT) observable by sampling a perturbation $u \sim \mu$, applying the projection $u_r = P_r u$, then simulating the $u_r$-perturbed SDE with the fixed Brownian motion realization to calculate $F_r(u) = F(P_r u)$. The approximation error metrics are computed over $m=5 \times 10^3$ samples of the drift perturbation $u \sim \mu$ projected onto the $r$-dimensional subspaces for $r=[10, 20, 40, 80, 120, 160, 200, 240, 280, 300]$. The leftmost panel of \Cref{fig:dw_mfht} shows that the pushforward distribution $F_\sharp \mu$ computed with the full-dimensional perturbations, corresponding to $d=300$, is strongly bimodal, with peaks near $F = 0$ and the final time $F = T$. The second peak corresponds to perturbation samples for which SDE trajectory on average did not reach the hitting domain within the time frame $[0,T]$. For the same subspace rank $r$, the distribution of the DI approximations of the MFHT more accurately captures the modes of the distribution compared to the KL approximations. We then compute the $L^2(\mu)$ error in the observable and Kolmogorov-Smirnov (KS) statistic between $F_\sharp \mu$ and $(F_r)_\sharp \mu$.  The center and rightmost panels of \Cref{fig:dw_mfht} show that for every truncation order $r$ leading up to $d$, both error metrics using the DI approximations of the observable are lower compared to those with the KL approximations.
    \begin{figure}[htbp]
    \centering
    \includegraphics[width=\textwidth]{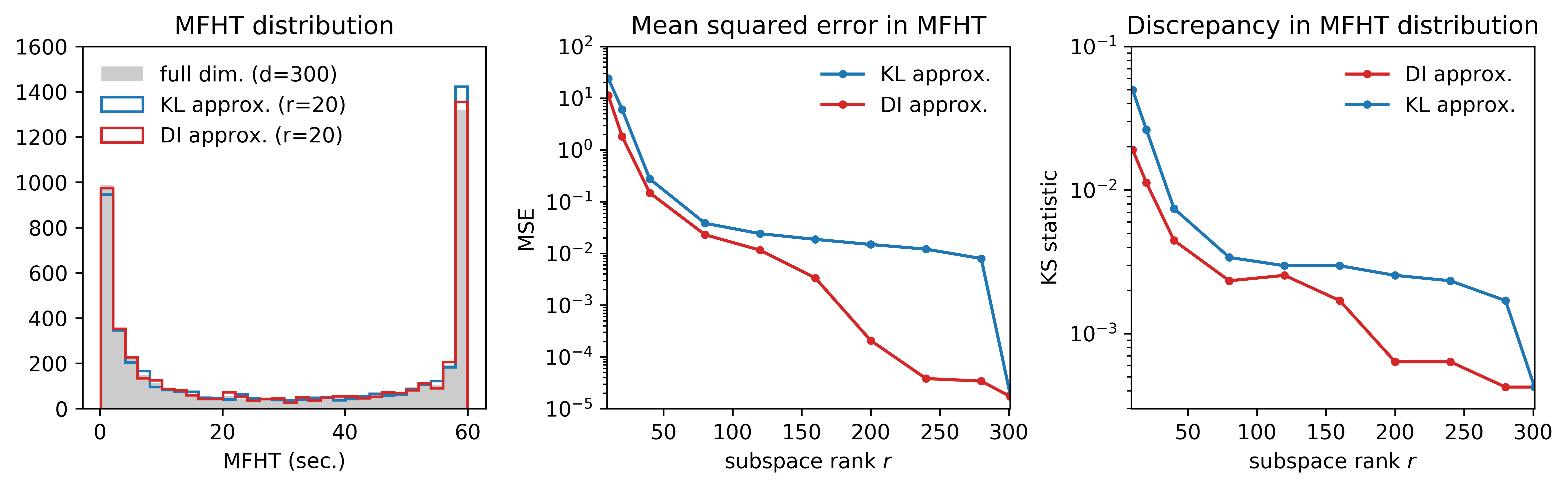}
    \caption{Left: Distribution of the rank-$r$ KL and DI approximations of the mean first hitting time (MFHT) of the double well potential with $r=20$. Center: $\mu$-weighted mean squared error (MSE) in the KL approximations (blue) and DI approximations (red) of the observable. Right: Kolmogorov-Smirnov (KS) statistic of the KL approximation (blue) and DI approximation (red) of the pushforward distribution.}
    \label{fig:dw_mfht}
    \end{figure}

Next, we numerically study how error in approximations of the observable using the rank-$r$ derivative-informed subspace depends on the Monte Carlo sample size $m$ used to estimate the sensitivity matrix. For an observable $F(u)$, we denote the rank-$r$ DI approximation of the observable computed from $\hat{\bm{K}}_{lm}$ by $F^m_r(u)$ and compute its value over every pair $(m,r)$ with $m \in \{100,200,400,800, 1600,3200\}$ and $r \in \{10,20,40,80,160,300\}$. Due to computational constraints, the observable is computed from a single realization of $\hat{\bm{K}}_{lm}$, and the mean squared error $\mathbb{E}_\mu[|F - F^m_r|^2]$ is approximated using $10^3$ fixed samples of $u \sim \mu$. The left panel of \Cref{fig:dw_sample} shows that the MSE in the observable is not sensitive to the Monte Carlo sample size past $m \gtrsim 400$, suggesting that the subspace across ranks $r$ is well-captured by $m=O(10^2)$. The right panel of \Cref{fig:dw_sample} shows that for $r \lesssim 80$, the MSE is relatively less sensitive $m$, whereas for $r > 80$ it depends more strongly on $m$. These results suggest for the given study, a modest number of perturbation samples suffices to capture dominant low-rank structure in the observable, while accurately resolving larger subspaces requires more samples. In practice, the sample budget for estimating the sensitivity matrix can be adapted according to the relative change in the subspace approximation for a given rank, and small $m$ is adequate when a low-rank approximation is sufficient.
    \begin{figure}[htbp]
    \centering
    \includegraphics[width=0.72\textwidth]{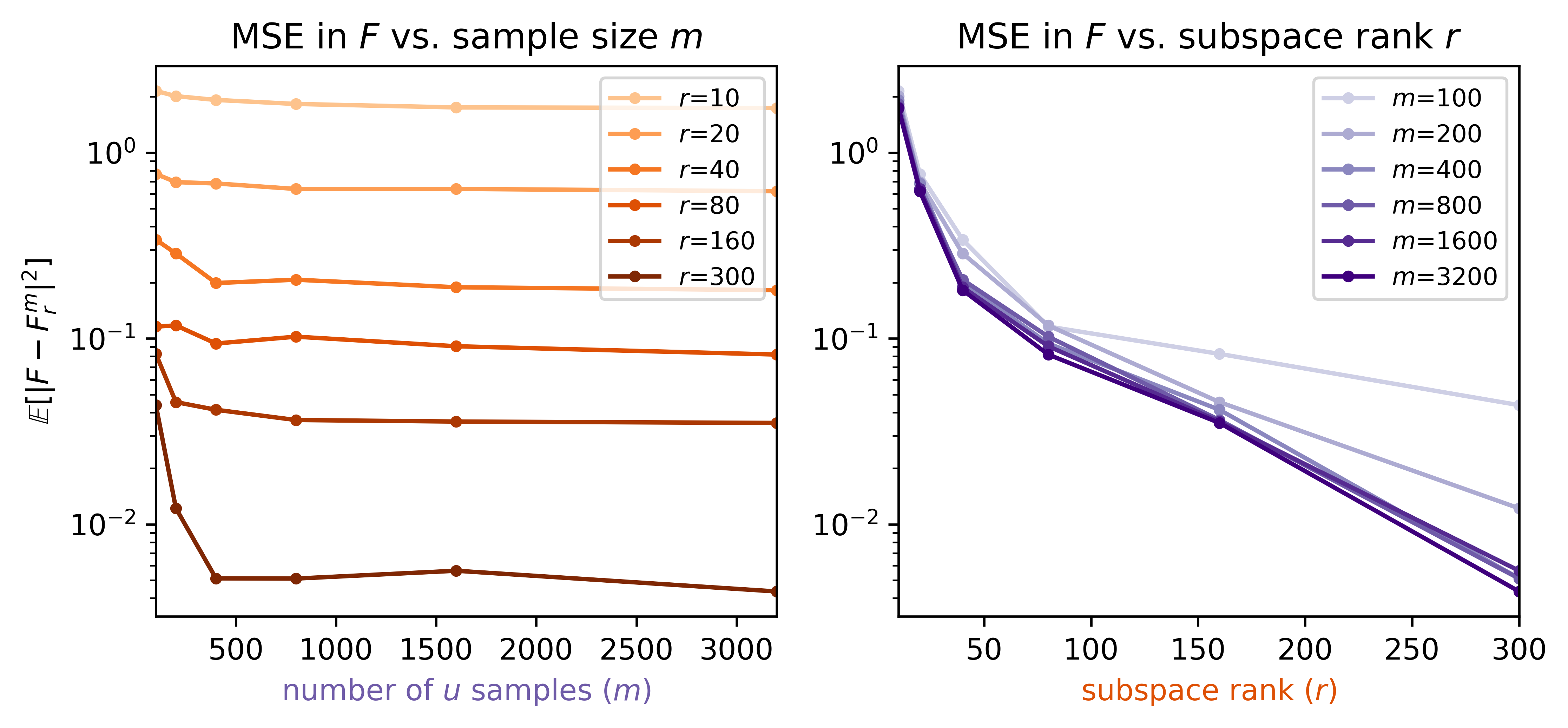}
    \caption{Mean squared error in the observable with respect to the Monte Carlo sample size $m$ used to compute $\hat{\bm{K}}_{lm}$ (left) and with respect to the rank $r$ of the DI subspace used to approximate $F^m_r$ (right). }
    \label{fig:dw_sample}
    \end{figure}
    
Finally, we evaluate the accuracy of the sensitivity matrix and corresponding DI subspace using the control variate estimator of the gradient of the path-space observable. For $l \in \{100,200,400,800\}$ path samples and $m=2.4 \times 10^3$ perturbation samples, we use the control variate-based gradient \eqref{eq:grad_cv} to compute $\hat{\bm{K}}^\textup{CV}_{lm}$ \eqref{eq:egop_cv}. As a point of comparison, we use the standard Monte Carlo gradient \eqref{eq:grad_mc} to compute $\hat{\bm{K}}_{lm}$ \eqref{eq:egop_mc} at the same values of $l$. The reference value of $\bm{K}$ is computed using large-sample Monte Carlo estimators of the gradients with $l=10^4$. \Cref{fig:dw_cv} shows that the control variate formulation leads to two orders of magnitude reduction in the MSE of the estimator of $\bm{K}$ \eqref{eq:mse_k}, reflected in reductions in both the estimator bias and variance. This reduction in MSE carries over to a reduction of error in the DI subspace, measured in terms of weighted subspace distance \eqref{eq:wsd}. 
    \begin{figure}[htbp]
    \centering
    \includegraphics[width=0.95\textwidth]{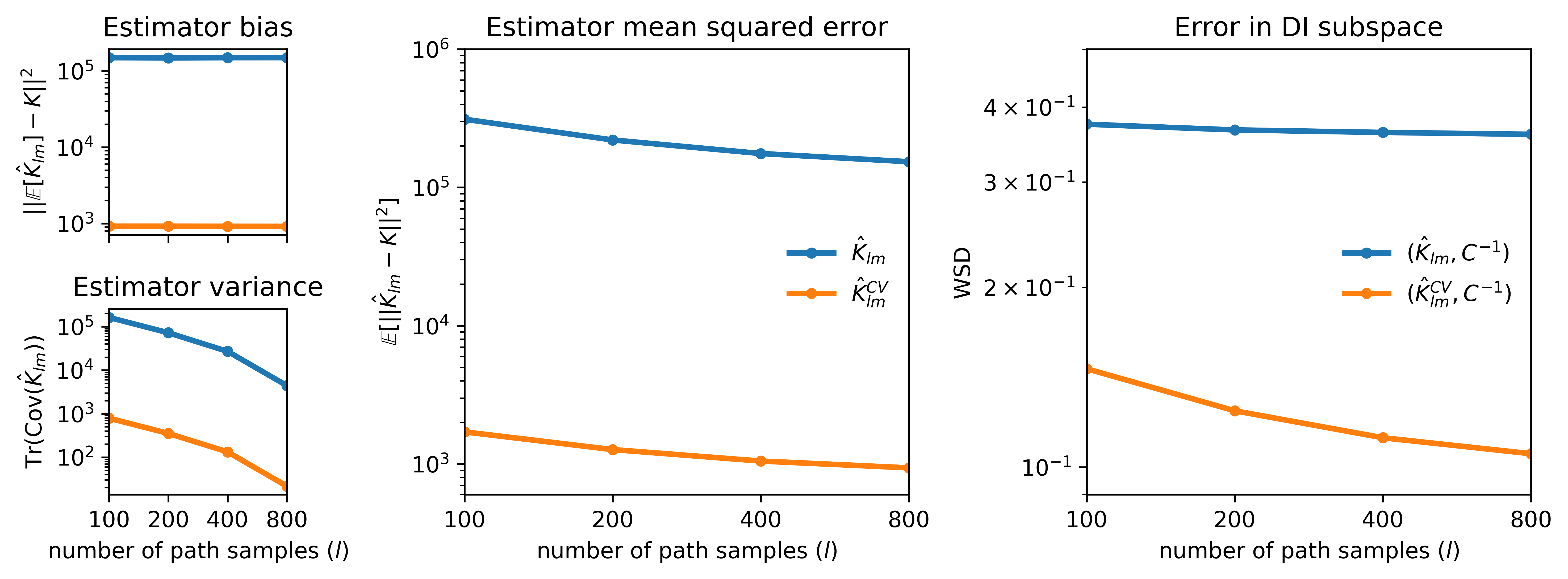}
    \caption{Bias (top left), variance (bottom left), and mean squared error (center) of the Monte Carlo estimator (blue) and control variate estimator (orange) of $\bm{K}$ for a varying number of path samples $l$. Weighted subspace distance (WSD) between the reference and estimated DI subspaces using each estimator of $\bm{K}$ (right).}
    \label{fig:dw_cv}
    \end{figure}

\subsection{M\"uller-Brown potential}
\label{sec:num_mb}

We conduct a similar study with the tempered M\"uller-Brown potential \cite{MullerBrown1979} on $\mathbb{R}^2$, where the potential is defined as
\begin{equation}
\label{eq:mb}
V(x,y) = \sum_{i=1}^{4} A_i \exp\left(a_i (x - x_i^0)^2 + b_i (x - x_i^0)(y - y_i^0) + c_i (y - y_i^0)^2\right)
\end{equation}
\begin{align*}
A &= (-10,\,-5,\,-8.5,\,0.75), & x^0 &= (1,\,0,\,-0.5,\,-1), & y^0 &= (0,\,0.5,\,1.5,\,1), \\ a &= (-1,\,-1,\,-6.5,\,0.7), & b &= (0,\,0,\,11,\,0.6), & c &= (-10,\,-10,\,-6.5,\,0.7).
\end{align*}
We rotate the M\"uller-Brown potential by $-\frac{\pi}{4}$ to align the dominant features with the finite element mesh over the spatial domain. The target path-space observable is a mean truncated first time defined by a maximum time $T=200$, initial condition $x_0 = [-1.42, 0.65]$ located within the primary well of the potential function, and an ellipsoidal domain $D$ about an intermediate well. The M\"uller-Brown potential, initial condition, hitting domain, and a sample hitting path are illustrated in the left panel of \Cref{fig:mb_proj}. 

For discretization of the dimension reduction problem, we define a custom finite element mesh on a bounded domain defined by a level set of the potential function, where the domain boundary is determined by a prescribed energy threshold. The resulting mesh has 2,998 nodes. The Gaussian measure is defined by the Laplacian operator \eqref{eq:laplacian} for $\zeta = 2$ and $\rho=0.5$. 

    \begin{figure}[htbp]
    \centering
    \includegraphics[width=\textwidth]{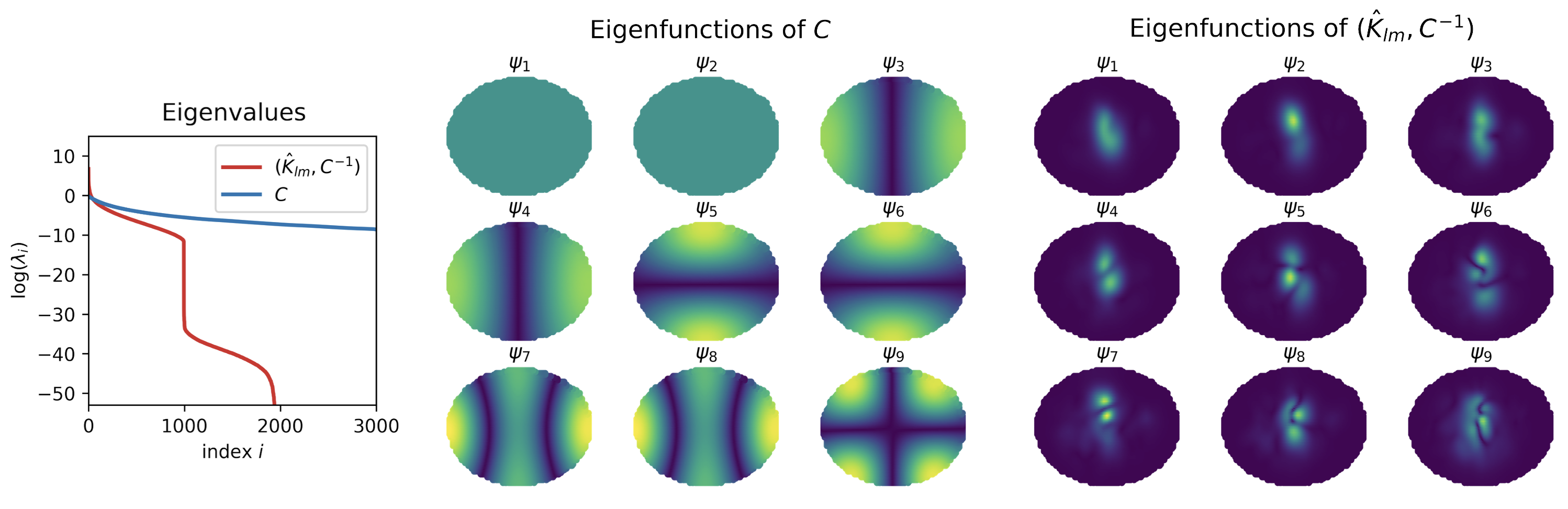}
    \caption{Left: Eigenvalues of $(\bm{K},\bm{C}^{-1})$, corresponding to the DI basis, and of $\bm{C}$, corresponding to the KL basis. Center: First nine eigenfunctions of $\bm{C}$. Right: First nine eigenfunctions of $(\bm{K},\bm{C}^{-1})$.}
    \label{fig:mb}
    \end{figure}

We derive the DI basis of the MFHT observable of the M\"uller-Brown potential, as well as the KL basis, from the spectral decomposition of $(\hat{\bm{K}}_{lm},\bm{C}^{-1})$ and $\bm{C}$, respectively. We observe a sharp decrease in the eigenvalues of the DI basis in the left panel of \Cref{fig:mb}. In the center panel, the leading KL eigenfunctions exhibit ordered spatial variation across the domain, as they capture directions of maximal variance in the perturbation field. In the bottom right panel, the leading DI eigenfunctions instead have concentrated variation about the center region of the domain where the primary well of the potential energy and the initial condition are defined, suggesting that these features are prominent in influencing variation in the observable. 

In \Cref{fig:mb_proj}, we study the projections of a sample drift perturbation $u \sim \mu$ onto the KL basis and DI basis for a subspace dimension $r=50$. We overlay $x_0$ and $D$ on the domain for reference. The DI projection, $u^{\textup{DI}}_r=P^{\textup{DI}}_ru$, highlights features of the potential in regions which are visited the most frequently by the SDE path. The KL projection, on the other hand, smooths the perturbation field but remains agnostic to the SDE path, its initial condition, and its observable. 
    \begin{figure}[htbp]
    \centering
    \includegraphics[width=\textwidth]{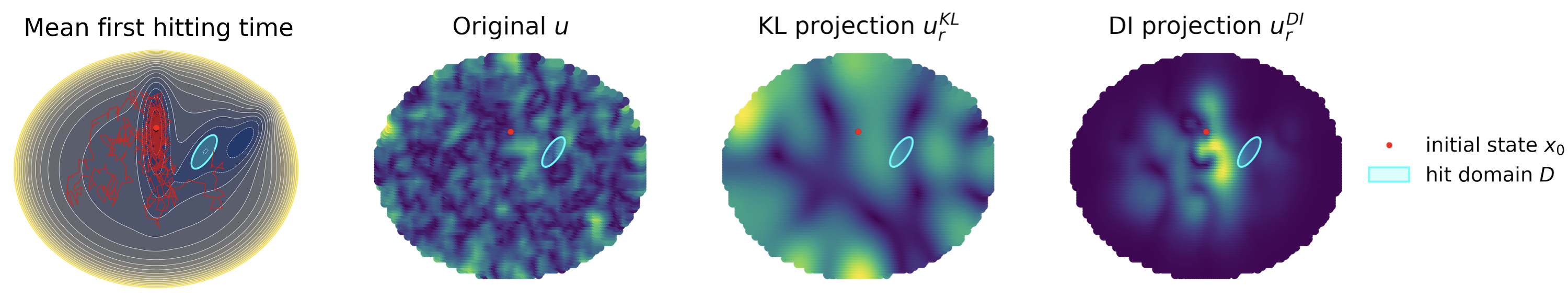}
    \caption{From left to right: The rotated M\"uller-Brown potential of \eqref{eq:mb}. The path-space observable is the mean first time within $[0,T]$ that paths starting at $X_0=x_0$ hit the domain $D$. A sample drift perturbation field $u \sim \mu$ and its projection onto a $50$-dimensional subspace of the KL basis and DI basis. }
    \label{fig:mb_proj}
    \end{figure}

Next, we evaluate the KL and DI approximations of the MFHT observable. The left panel of \Cref{fig:mb_mfht} shows that we are able to capture the pushforward distribution on a significantly reduced-dimensional subspace of $\mathbb{U}$ using the DI basis, reducing from $d=2998$ to $r=10$. The pushforward distribution computed using the KL subspace of the same dimension shows much greater discrepancy to the reference distribution. This discrepancy is further reflected in the center and right panels of \Cref{fig:mb_mfht}, which show that at low subspace ranks $r \lesssim 100$, the DI approximation leads to up to an order of magnitude reduction in the mean squared error (MSE) in $F$ and the Kolmogorov-Smirnov statistic compared to the KL approximation.
    \begin{figure}[htbp]
    \centering
    \includegraphics[width=\textwidth]{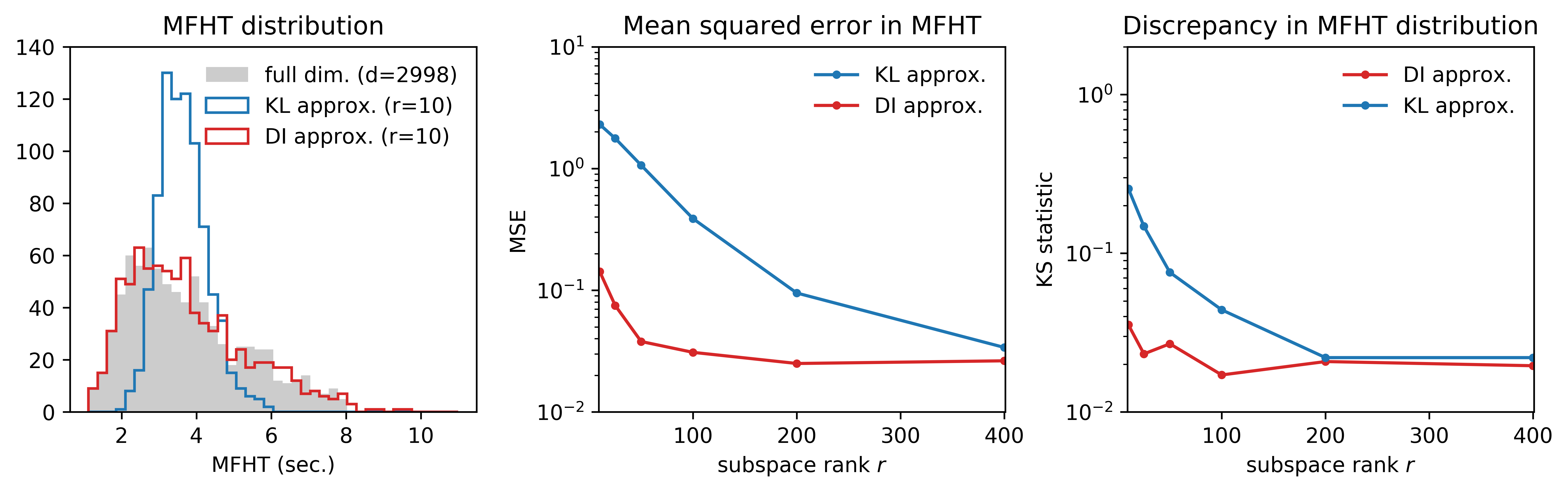}
    \caption{Left: Distribution of the rank-$r$ KL and DI approximations of the mean first hitting time (MFHT) of the M\"uller-Brown potential with $r=10$. The reference pushforward distribution $F_\sharp \mu$ using $u \sim \mu$ in full dimensions is shown in gray. Center: $\mu$-weighted mean squared error (MSE) in the KL approximations (blue) and DI approximations (red) of the observable. Right: Kolmogorov-Smirnov (KS) statistic of the KL approximation (blue) and DI approximation (red) of the pushforward distribution.}
    \label{fig:mb_mfht}
    \end{figure}

\section{Conclusions}
\label{sec:conclusions}

We introduce a derivative-informed dimension reduction approach for the uncertainty quantification of path-space observables of SDEs with uncertain drift functions. We derive the derivative-informed basis from the spectral decomposition of a path-space sensitivity operator involving the Fr\'echet derivative of the observable with respect to the drift perturbation, for which we have an analytical formula in \Cref{lem:frechet}. In \Cref{cond:pathfunc}, we specify sufficient conditions on the path functional that lead to the results that i) the eigenfunctions form an orthonormal basis of the Cameron-Martin space associated with the Gaussian measure on drift perturbations, following \Cref{lem:traceclass}; ii) the eigenvalues are a measure of sensitivity of the observable along the eigendirections, according to \Cref{lem:directderiv}; and iii) the derivative-informed projection operators lead to low-rank approximations of the observable with error guarantees, as shown in \Cref{prop:di}. This construction can be viewed as an infinite-dimensional formulation of active subspaces \cite{Constantine2020} tailored to SDE observables.

We discuss the practical computation of derivative-informed subspaces by introducing a finite element discretization scheme in \Cref{sec:fe} and a control variate for reduced-variance approximation of the sensitivity operator in \Cref{sec:controlvariate}. Our choice of control variate has the advantages that it has an analytical mean and contributes negligible additional costs to estimating the sensitivity operator, as it is already computed in the Fr\'echet derivative formula.

Our numerical examples, conducted using FEniCS \cite{Alnaes2015} and hIPPYlib \cite{Villa2021}, illustrate the advantages of derivative-informed dimension reduction. For mean first hitting time observables in both a double-well potential and the M\"uller-Brown potential, low-rank derivative-informed approximations more accurately reproduce the observable and its pushforward distribution compared to Karhunen-Lo\`eve (KL) approximations of the same rank. These improvements are expected to be especially pronounced for higher state dimensions. While KL modes are agnostic to variation in the observable, the derivative-informed modes preserve accuracy in regions of the state space that are most relevant to the observable, such as the neighborhoods of metastable states, hitting domains, and boundaries. In this sense, the derivative-informed basis acts as a feature extractor: the leading modes correspond to observable-adapted features of the drift function that govern the observable and can serve as inputs to downstream surrogate models of the parameter-to-observable map. 

A key feature of our dimension reduction framework is that it applies to observables defined on path-space, such as first passage times and transition rates between metastable states. Koopman-based reduction techniques, which apply to observables defined on deterministic time intervals, do not directly extend to path-space observables that depend on random stopping times. Our numerical results show that these path-space observables nevertheless possess low-dimensional structure with respect to the drift which is revealed by the derivative-informed subspace. 

There are several possible extensions of this work. One direction is to evaluate the method on higher-dimensional state spaces using alternatives to finite elements to discretize the associated operators -- for instance, random Fourier features or a kernel basis with appropriate constructions of Gaussian measures. Separately, the derivative-informed subspace framework could be integrated with MOR methods for state space reduction. Another direction is to extend the method to consider uncertainties in diffusion coefficients, initial distributions, or boundary conditions and derive the appropriate derivative information. Finally, extensions of the analysis to a broader class of path-space observable, including those corresponding to rare events or with unbounded stopping times, would broaden the scope of this approach in applications to molecular dynamics, stochastic control, and data-driven modeling of stochastic systems.


\appendix

\section{Proof of \Cref{lem:traceclass}}
\label{app:sensitivitypf}

\begin{proof}
    First, we show that $\mathcal{K}|_\mathcal{E}$ is self-adjoint. By definition \eqref{eq:egop_e}, for $w, w' \in \mathcal{E}$,
    \[
    \la \mathcal{K}|_\mathcal{E} w, w' \ra_\mathcal{E} = \mathbb{E}_{u \sim \mu} \big[ \la D_\mathcal{E}F(u)^* D_\mathcal{E}F(u) (w), w' \ra_\mathcal{E} \big] = \mathbb{E}_{u \sim \mu} \big[ \la D_\mathcal{E}F(u) (w), D_\mathcal{E}F(u) (w') \ra_\mathcal{Y} \big] \, .
    \]
    Since the inner product is symmetric in its arguments, we have that $\la \mathcal{K}|_\mathcal{E} w, w' \ra_\mathcal{E} = \la w, \mathcal{K}|_\mathcal{E} w' \ra_\mathcal{E}$.

    Next, since $\la \mathcal{K}|_\mathcal{E} w, w \ra_\mathcal{E} = \mathbb{E}_{u \sim \mu} \big[ \lVert  D_\mathcal{E}F(u)(w)  \rVert^2_\mathcal{Y} \big] \geq 0$ for all $w \in \mathcal{E}$, $\mathcal{K}|_\mathcal{E}$ is positive semidefinite. 

    Finally, we show that $\mathcal{K}|_\mathcal{E}$ is trace-class. For any orthonormal basis $(e_i)_{i \geq 0}$ of $\mathcal{E}$,
    \[
    \text{Tr}(\mathcal{K}|_\mathcal{E}) = \sum_{i \geq 0} \la \mathcal{K}|_\mathcal{E} e_i, e_i \ra_\mathcal{E} = \sum_{i \geq 0} \mathbb{E}_{u \sim \mu} [ \lVert  D_\mathcal{E} F(u) (e_i) \rVert^2_\mathcal{Y}] = \mathbb{E}_{u \sim \mu} \big[ \lVert  D_\mathcal{E} F(u)  \rVert^2_{\text{HS}(\mathcal{E},\mathcal{Y})} \big] \ ,
    \]
    where the interchange of the sum and expectation is valid since all terms are non-negative and the final equality follows from the definition of the Hilbert-Schmidt norm.

    Since \Cref{cond:f} implies that for all $u \in \mathbb{U}$, $DF(u) \in \mathcal{L}(\mathbb{U},\mathcal{Y})$ and the inclusion map $\imath: \mathcal{E} \to \mathbb{U}$ is in $\textup{HS}(\mathcal{E}, \mathbb{U})$ \cite[Lem. 46]{Luo2025}, then the composition $D_\mathcal{E}F(u) = DF(u) \circ \imath$ is in $\textup{HS}(\mathcal{E}, \mathcal{Y})$ by \cite[Prop. A.2.10]{Bogachev1998}. Therefore, for any $u \in \mathbb{U}$, 
    \begin{equation}
        \lVert D_\mathcal{E}F(u) \rVert_{\textup{HS}(\mathcal{E}, \mathcal{Y})} = \lVert DF(u) \circ \imath \rVert_{\textup{HS}(\mathcal{E}, \mathcal{Y})} \leq \lVert DF(u)\rVert_{\mathcal{L}(\mathbb{U}, \mathcal{Y})} \lVert \imath \rVert_{\textup{HS}(\mathcal{E}, \mathbb{U})} \, .
    \end{equation}
    Thus,
    \begin{align}
        \label{eq:l2hs}
        \int_{\mathbb{U}} \lVert D_\mathcal{E}F(u) \rVert^2_{\textup{HS}(\mathcal{E}, \mathcal{Y})} \mu(\upd u) & \leq \int_{\mathbb{U}} \lVert DF(u)\rVert^2_{\mathcal{L}(\mathbb{U}, \mathcal{Y})} \lVert \imath \rVert^2_{\textup{HS}(\mathcal{E}, \mathbb{U})} \mu(\upd u) \\
        & = \lVert \imath \rVert^2_{\textup{HS}(\mathcal{E}, \mathbb{U})} \int_{\mathbb{U}} \lVert DF(u)\rVert^2_{\mathcal{L}(\mathbb{U}, \mathcal{Y})} \mu(\upd u) \ ,
        \notag
    \end{align}
    and the integral in the final line of \eqref{eq:l2hs} is finite by \Cref{cond:f}. Therefore, $D_\mathcal{E} F \in L^2(\mu, \textup{HS}(\mathcal{E}, \mathcal{Y}))$ and $\mathbb{E}_\mu \big[ \lVert  D_\mathcal{E} F  \rVert^2_{\text{HS}(\mathcal{E},\mathcal{Y})} \big] < \infty$. 
    
    Since $\mathcal{K}|_\mathcal{E}$ is self-adjoint and positive semidefinite, $\text{Tr}(\mathcal{K}|_\mathcal{E}) < \infty$ is a necessary and sufficient condition for the operator to be trace-class.
\end{proof}

\section{Proof of \Cref{lem:gen_eig}}
\label{app:adjointpf}

\begin{proof}
    To start, we prove that for strictly positive definite covariance $\mathcal{C}$ and $F$ that satisfies \Cref{cond:f}, $D_\mathcal{E} F(u)^* = \mathcal{C} \, DF(u)^*$. Since $DF(u)$ for $u \in \mathbb{U}$ is assumed to be a bounded operator, its adjoint $DF(u)^*: \mathcal{Y} \to \mathbb{U}$ exists and satisfies  
    \[
        \la DF(u)(v), y \ra_\mathcal{Y} = \la v, DF(u)^*(y) \ra_{\mathbb{U}} \quad \forall v \in \mathbb{U}, y\in\mathcal{Y} \ .
    \]
    Let $h_\mathcal{E} \coloneqq \mathcal{C}^{\half}h \in \mathcal{E}$ for some $h \in \mathbb{U}$. Since $\mathcal{C}^{\half}$ is self-adjoint in the $\mathbb{U}$-inner product,
    \begin{equation}
        \label{eq:cadjoint}
        \la DF(u)(h_\mathcal{E}), y \ra_\mathcal{Y} = \la \mathcal{C}^{\half} h, DF(u)^*(y) \ra_\mathbb{U} = \la h, \mathcal{C}^{\half} DF(u)^*(y) \ra_{\mathbb{U}} \ .
    \end{equation}
    We can also write $h = \mathcal{C}^{-\half} h_\mathcal{E}$ and $\mathcal{C}^{\half} DF(u)^*(y) = \mathcal{C}^{-\half} \big( \mathcal{C} DF(u)^*(y)\big)$. By the definition of the $\mathcal{E}$-inner product, \eqref{eq:cadjoint} becomes
    \[
    \la h, \mathcal{C}^{\half} DF(u)^*(y) \ra_{\mathbb{U}} = \la \mathcal{C}^{-\half} h_\mathcal{E}, \mathcal{C}^{-\half} \big( \mathcal{C} DF(u)^*(y) \big) \ra_{\mathbb{U}} = \la h_\mathcal{E}, \mathcal{C} DF(u)^*(y) \ra_{\mathcal{E}} \ .
    \]
    
    Let $\imath: \mathcal{E} \to \mathbb{U}$ be the inclusion map of an element in the Cameron-Martin space. Since $D_\mathcal{E}F(u)$ is the restriction of $DF(u)$ to directions in $\mathcal{E}$, e.g., $D_\mathcal{E}F(u) = DF(u) \circ \imath$, we have that $DF(u)(w) = D_\mathcal{E}F(u)(w)$ for all $w \in \mathcal{E}$ and 
    \begin{equation}
        \label{eq:cmadjoint}
        \la DF(u)(h_\mathcal{E}), y \ra_\mathcal{Y} = \la D_{\mathcal{E}}F(u)(h_\mathcal{E}), y \ra_\mathcal{Y} = \la h_\mathcal{E}, D_{\mathcal{E}}F(u)^*(y) \ra_{\mathcal{E}} \quad \forall y\in\mathcal{Y} \ .
    \end{equation}
    Equating \eqref{eq:cadjoint} and \eqref{eq:cmadjoint}, we have $\la h_\mathcal{E}, D_{\mathcal{E}}F(u)^*(y) \ra_{\mathcal{E}} = \la h_\mathcal{E}, \mathcal{C} DF(u)^*(y) \ra_{\mathcal{E}}$. By uniqueness of the adjoint, $D_\mathcal{E} F(u)^* = \mathcal{C} \, DF(u)^*$. 

    It then follows that for $w \in \mathcal{E}$,
    \[
    \mathcal{K}|_\mathcal{E}(w) \coloneqq \mathbb{E}_{u \sim \mu}\big[ D_\mathcal{E}F(u)^*  D_\mathcal{E}F(u)(w)  \big] = \mathbb{E}_{u \sim \mu}\big[ \mathcal{C}DF(u)^*  DF(u)(w)  \big] = \mathcal{C}\mathcal{K}.
    \]
    Since $\mathcal{K}$ is an operator on $\mathcal{U}$ and $\mathcal{E} \subseteq \mathbb{U}$, then $\mathcal{K}$ is also an operator on $\mathcal{E}$; similarly for $\mathcal{C}$. It follows that the eigenvalue problem $\mathcal{C}\mathcal{K} \psi^{\textup{DI}}_i=\lambda^{\textup{DI}}_i \psi^{\textup{DI}}_i$ on $\mathcal{E}$ can be premultiplied by $\mathcal{C}^{-1}$ to obtain \eqref{eq:geneig}.
\end{proof}

\section{Proof of \Cref{lem:directderiv}}
\label{app:ddpf} 

\begin{proof}
    Given that $F$ satisfies \Cref{cond:f}, \Cref{lem:traceclass} implies that $\mathcal{K}|_\mathcal{E}$ is a positive, self-adjoint, trace-class operator on $\mathcal{E}$. Hence, its eigenfunctions $\{\psi^{\textup{DI}}_i\}_{i \geq 0}$ form an orthonormal basis of $\mathcal{E}$ with eigenvalues ordered as $\lambda^{\textup{DI}}_0 \geq \lambda^{\textup{DI}}_1 \geq ... \geq 0.$
    
    For $i \geq 0$, consider an arbitrary element $w \in \mathcal{W}_i$. Expanding $w$ in the eigenbasis, we have
        \[
        w = \sum_{j \geq 0} c_j \psi^{\textup{DI}}_j \, , \qquad 
        c_j = \la w, \psi^{\textup{DI}}_j \ra_\mathcal{E} \, , \qquad \sum_{j \geq 0} c_j^2 = \| w \|^2_\mathcal{E} = 1 \, .
        \]
    The orthogonality constraint defining $\mathcal{W}_i$ implies that $c_j=0$ for $j=0,...,i-1$. Therefore, the sums can be equivalently expressed over indices $j \geq i$. We furthermore have
        \[
        \mathcal{K}|_\mathcal{E} w = \sum_{j \geq i} c_j \mathcal{K}|_\mathcal{E} \psi^{\textup{DI}}_j = \sum_{j \geq i} c_j \lambda^{\textup{DI}}_j \psi^{\textup{DI}}_j \, .
        \]
    Using orthonormality of $\{\psi^{\textup{DI}}_i\}_{i \geq 0}$, we have the inner product
        \[
        \la \mathcal{K}|_\mathcal{E} w, w \ra_\mathcal{E} = \sum_{j \geq i} \la c_j \lambda^{\textup{DI}}_j \psi^{\textup{DI}}_j, c_j \psi^{\textup{DI}}_j \ra_\mathcal{E} = \sum_{j \geq i} \lambda^{\textup{DI}}_j c_j^2 = \mathbb{E}_{u \sim \mu} \big[ \| D_\mathcal{E}F(u)(w) \|^2_\mathcal{Y} \big] \, .
        \]
    Since the eigenvalues are in order of decreasing value, $\sum_{j \geq i} \lambda^{\textup{DI}}_j c_j^2 \leq \lambda^{\textup{DI}}_i \sum_{j \geq i} c_j^2 = \lambda^{\textup{DI}}_i$. Equality is attained if and only if $c_i = 1$ and $c_j=0$ for $j \neq i$, which corresponds to $w = \psi^{\textup{DI}}_i$.  Hence, 
        \[
        \lambda^{\textup{DI}}_0 = \underset{\| w \|_\mathcal{E} = 1}{\textup{max}} \ \mathbb{E}_{u \sim \mu} \big[ \| D_\mathcal{E}F(u)(w)\|^2_\mathcal{Y} \big] = \mathbb{E}_{u \sim \mu} \big[ \| D_\mathcal{E}F(u)(\psi^{\textup{DI}}_0)\|^2_\mathcal{Y} \big] \, .
        \]
\end{proof}

\section{Proof of \Cref{prop:condf}}
\label{app:condfpf} 

First, we state and prove an intermediate result on properties of the first-order Fr\'echet derivative of the path-space observable.

\begin{lemma}[Fr\'echet derivative of the path-space observable is a bounded linear operator]
    \label{lem:bounded}
    Let the assumptions of \Cref{lem:frechet} hold. Then for every $u \in \mathbb{U}$ and $F(u)$ defined as in \eqref{eq:observable}, $DF(u) \in \mathcal{L}(\mathbb{U}, \mathcal{Y})$. 
\end{lemma}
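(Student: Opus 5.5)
Starting from the representation in \Cref{lem:frechet}, $DF(u)(v) = \mathbb{E}^u[\varphi_\tau M^v_\tau]$ with $M^v_\tau = \int_0^\tau v(X^u_s)^\top \upd W_s$, I will check two things: linearity of $v \mapsto DF(u)(v)$, and a bound $\|DF(u)(v)\|_\mathcal{Y} \leq c(u)\|v\|_\mathbb{U}$ with a constant $c(u)$ that does not depend on $v$.

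\textbf{Linearity.} The stochastic integral is linear in its integrand, so $M^{a v + b v'}_\tau = a M^v_\tau + b M^{v'}_\tau$ almost surely. Linearity of $\mathbb{E}^u$ then transfers this to $DF(u)$. This only needs each expectation to be finite, which the bound below provides.

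\textbf{Boundedness.} I apply Cauchy--Schwarz in $L^2(\mathbb{P}^u)$, using the Bochner inequality $\|\mathbb{E}^u[\cdot]\|_\mathcal{Y} \leq \mathbb{E}^u\|\cdot\|_\mathcal{Y}$:
\[
\|DF(u)(v)\|_\mathcal{Y} \leq \mathbb{E}^u\big[\|\varphi_\tau\|_\mathcal{Y} |M^v_\tau|\big] \leq \|\varphi_\tau\|_{L^2(\mathbb{P}^u,\mathcal{Y})} \, \big(\mathbb{E}^u[|M^v_\tau|^2]\big)^{\half}.
\]
The first factor is finite by the hypothesis $\varphi_\tau \in L^2(\mathbb{P}^u,\mathcal{Y})$ in \Cref{lem:frechet}. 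For the second factor I use the Itô isometry (as in \eqref{eq:optimalbeta}) together with $\tau \leq T$ from \Cref{cond:stoptime}:
\[
\mathbb{E}^u[|M^v_\tau|^2] = \mathbb{E}^u\Big[\int_0^\tau |v(X^u_s)|^2 \upd s\Big] \leq T \|v\|_\infty^2 \leq T c_\infty^2 \|v\|_\mathbb{U}^2.
\]
Here $c_\infty$ is the norm of the continuous embedding $\mathbb{U} \hookrightarrow L^\infty(\mathcal{X})$ assumed in \Cref{lem:controlspace}. Combining the two displays gives
\[
\|DF(u)(v)\|_\mathcal{Y} \leq \sqrt{T}\, c_\infty \|\varphi_\tau\|_{L^2(\mathbb{P}^u,\mathcal{Y})}\|v\|_\mathbb{U},
\]
so $DF(u) \in \mathcal{L}(\mathbb{U},\mathcal{Y})$.

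\textbf{Main obstacle.} The delicate step is justifying the Itô isometry. Two points need care.

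\begin{itemize}
\item The isometry requires $M^v$ to be a genuine square-integrable martingale up to $\tau$, not just a local martingale. This follows because the integrand $v(X^u_s)\mathbbm{1}_{s\le\tau}$ is progressively measurable and bounded by $\|v\|_\infty$ on $[0,T]$, so $\int_0^T \mathbb{E}|v(X^u_s)|^2\mathbbm{1}_{s\le\tau}\,\upd s < \infty$.
\item One must confirm that $W$ is the Brownian motion driving $X^u$ under $\mathbb{P}^u$. This is the convention implicit in \cite[Lemma 3.6]{Lie2021}, and I would state it explicitly.
\end{itemize}

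I would also note that the pointwise evaluation $v(X^u_s)$ is meaningful, since elements of $\mathbb{U}$ have bounded (indeed Hölder-continuous) representatives.
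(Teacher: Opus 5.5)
Your proof is correct and follows the paper's argument essentially step for step. You use the Bochner inequality and Cauchy--Schwarz to split off $\|\varphi_\tau\|_{L^2(\mathbb{P}^u,\mathcal{Y})}$, then the It\^o isometry together with the embedding $\mathbb{U}\hookrightarrow L^\infty$ to bound $\|M^v_\tau\|_{L^2(\mathbb{P}^u)}$. The differences are only refinements: you bound $\mathbb{E}^u[\int_0^\tau|v(X^u_s)|^2\,\upd s]$ by $T\|v\|_\infty^2$ rather than by $\|v\|_\infty^2\,\mathbb{E}^u[\tau]$, and you make explicit two things the paper leaves implicit, namely linearity and the square-integrability needed to apply the isometry.
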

\begin{proof}
    Let $u, v \in \mathbb{U}$ be arbitrary. From the definition of $DF(u)(v)$ in \eqref{eq:frechet}, it follows that
    \begin{equation}
            \label{eq:app1}
            \lVert DF(u)(v) \rVert_\mathcal{Y} = \lVert \mathbb{E}^u[\varphi_\tau M^v_\tau]  \rVert_\mathcal{Y} \leq \mathbb{E}^u \big[ \lVert \varphi_\tau \rVert_\mathcal{Y} \, |M^v_\tau| \big] \leq \lVert \varphi_\tau \rVert_{L^2(\mathbb{P}^u, \mathcal{Y})} \lVert M^v_\tau \rVert_{L^2(\mathbb{P}^u)}.
    \end{equation}
    In addition, one has as a result of It\^o isometry that
    \begin{align}
    \label{eq:app2}
            \lVert M^v_\tau \rVert^2_{L^2(\mathbb{P}^u)} & = \mathbb{E}^u[(M^v_\tau)^2] = \mathbb{E}^u \Big[ \int_0^\tau |v(X^u_s)|^2 \upd s \Big] \\
            & \leq \lVert v \rVert^2_{L^\infty} \mathbb{E}^u[\tau] \leq C_\infty^2 \lVert v \rVert^2_{\mathbb{U}} \mathbb{E}^u[\tau] ,
    \notag
    \end{align}
    where the last inequality follows from $\lVert v \rVert_{L^\infty} \leq C_\infty \lVert v \rVert_{\mathbb{U}}$ for some constant $C_\infty$ as a result of $\mathbb{U}$ being continuously embedded in $L^\infty(\mathbb{R}^n)$. Combining \eqref{eq:app1} and \eqref{eq:app2}, an upper bound to $\lVert DF(u)(v) \rVert_\mathcal{Y}$ is given by
    \begin{equation}
    \label{eq:app3}
        \lVert DF(u)(v) \rVert_\mathcal{Y} \leq C_\infty \lVert \varphi_\tau \rVert_{L^2(\mathbb{P}^u, \mathcal{Y})} \lVert v \rVert_{\mathbb{U}} \mathbb{E}^u[\tau]^{\frac{1}{2}} \, .
    \end{equation}
    Since $\tau$ is assumed to be bounded, $\mathbb{E}^u[\tau] < \infty$. By assumption, $\varphi_\tau \in L^2(\mathbb{P}^u, \mathcal{Y})$. It follows that $\lVert DF(u)(v) \rVert_\mathcal{Y}$ is finite for every $v \in \mathbb{U}$ and therefore $DF(u) \in \mathcal{L}(\mathbb{U}, \mathcal{Y})$. 
\end{proof}

\begin{proof}[Proof of \Cref{prop:condf}]
Since $DF(u)$ is a bounded linear operator for every $u \in \mathbb{U}$ by \Cref{lem:bounded}, it directly follows that $DF(u)$ is continuous and $F \in C^1(\mathbb{U},\mathcal{Y})$, proving the first part of \Cref{cond:f}. To prove the second part, we use the result in \Cref{lem:bounded} 
to observe that 
\begin{equation}
    \label{eq:app4}
    \begin{aligned}
    \int_\mathbb{U} \lVert DF(u) \rVert^2_{\mathcal{L}(\mathbb{U}, \mathcal{Y})} \mu (\upd u) & \leq \int_\mathbb{U} C_\infty^2 \lVert \varphi_\tau \rVert^2_{L^2(\mathbb{P}^u, \mathcal{Y})} \mathbb{E}^u[\tau] \mu (\upd u) \\
    & \leq C_\infty^2 \Big( \int_\mathbb{U} \lVert \varphi_\tau \rVert^4_{L^2(\mathbb{P}^u, \mathcal{Y}) } \mu (\upd u) \Big)^{\frac{1}{2}} \Big( \int_\mathbb{U} \mathbb{E}^u[\tau]^2 \mu (\upd u) \Big)^{\frac{1}{2}} \, .
    \end{aligned}
\end{equation}

Since $\mathbb{E}^u[\tau]$ is bounded, the second integral in the final line of \eqref{eq:app4} is finite. The first integral is finite by assumption. Therefore, $DF(u) \in L^2(\mu, \mathcal{L}(\mathbb{U}, \mathcal{Y}))$. 
\end{proof}




\section{Eigendecomposition of the covariance operator}
\label{app:eigfuncs}

In the case of a 1-D state domain $\mathcal{X} \subseteq \mathbb{R}$, the eigenvalues and eigenfunctions of the covariance operator $\mathcal{C}=\mathcal{A}^{-2}$ can be written analytically as follows. Let $[a,b]$ be a subinterval of $\mathbb{R}$ where $L = b - a > 0$. For $\gamma, \delta > 0$ and $\nu \in \mathbb{Z}$, let $\mathcal{A}^{\nu} = (-\gamma \Delta + \delta I)^\nu$ be an operator acting on $C^2([a,b])$ with homogeneous Neumann boundary conditions. Then for $k \in \mathbb{N}_0$, the eigenvalues and eigenfunctions of $\mathcal{A}^{\nu}$ are given by 
\[
\lambda_k = \Big( \gamma \Big( \frac{k \pi}{L} \Big)^2 + \delta \Big)^\nu, \quad \phi_0(x) = \sqrt{\frac{1}{L}}, \qquad \phi_k(x) = \sqrt{\frac{2}{L}} \cos \Big(\frac{k \pi (x - a)}{L} \Big), \ k \geq 1 \, .
\]

\section{Error metrics}
\label{app:errormetrics}

Let $F_r$ denote a rank-$r$ subspace approximation of the path-space observable $F$ for some $r > 0$. The $L^2(\mu)$-weighted error 
\[
\text{MSE}_F(F_r) \coloneqq \mathbb{E}_{u \sim \mu}\big[ \lVert F(u) - F_r(u) \rVert^2\big]
\]
is referred to as the \textit{mean squared error} in the path-space observable.

Denote the empirical cumulative density function (CDF) of the pushforward distribution $F_\sharp \mu$  by
\[
\Pi^N(y) \coloneqq \frac{1}{N}\sum_{i=1}^N \bm{1}\{F(u^{(i)}) \leq y\}, \quad u^{(i)} \sim \mu
\]
and, similarly, the empirical CDF of $(F_r)_\sharp \mu$ by $\Pi^N_r(y)=\frac{1}{N}\sum_{i=1}^N \bm{1}\{F_r(u^{(i)}) \leq y\}$. The \textit{two-sample Kolmogorov-Smirnov (KS) statistic}, 
\[
\mathcal{T}_{\text{KS}} \coloneqq \underset{y \in \mathbb{R}}{\text{sup}} | \Pi^N(y) - \Pi_r^N(y)|
\]
is a measure of discrepancy between the $n$-sample empirical distributions representing $F_\sharp \mu$ and $(F_r)_\sharp \mu$.

Let $\hat{\bm{K}}$ be a sample-based estimator of the sensitivity matrix $\bm{K}$ \eqref{eq:egop} computed from $\mu$-i.i.d. realizations of the perturbation field. Its mean squared error with respect to the Frobenius norm,
\begin{equation}
\label{eq:mse_k}
\text{MSE}_{\bm{K}}(\hat{\bm{K}}) \coloneqq \mathbb{E}\big[ \lVert \bm{K} - \hat{\bm{K}} \rVert^2_F \big] = \lVert \bm{K} - \mathbb{E}[\hat{\bm{K}}] \rVert^2_F + \mathbb{E}\big[\lVert \hat{\bm{K}} - \mathbb{E}[\hat{\bm{K}}]\rVert_F^2\big] \, ,
\end{equation}
admits a bias-variance decomposition, where the expectations are with respect to the sampling distribution of $\hat{\bm{K}}$. The variance term can be written equivalently as $\mathbb{E}\big[\lVert \hat{\bm{K}} - \mathbb{E}[\hat{\bm{K}}]\rVert_F^2\big] = \text{Tr}(\text{Cov}(\text{vec} \,\hat{\bm{K}}))$. 

The \textit{weighted subspace distance} \cite[Def. 2]{Cui2014} gives a measure of discrepancy between two subspaces; here, we apply it to estimates of the derivative-informed subspace. Let $\{(\lambda^{(p)}_j, \bm{\psi}^{(p)}_j)\}_{j=1}^J$ be the eigenvalues and eigenvectors of $(\hat{\bm{K}}^{(p)},\bm{C}^{-1})$. Let $\Psi_r^{(p)} \coloneqq (\bm{\psi}^{(p)}_1,...,\bm{\psi}^{(p)}_r)$ and $(D_r^{(p)})_{ij}=\delta_{ij}(\tilde{\lambda}^{(p)}_i)^{\frac{1}{4}}$ be a matrix with diagonal entries 
\[
\tilde{\lambda}^{(p)}_i \coloneqq \frac{\lambda^{(p)}_i}{\sum_{j=1}^r \lambda^{(p)}_j} \quad , i=1,...,r
\]
for some rank parameter $r > 0$. Then the rank-$r$ weighted subspace distance is 
\begin{equation}
\label{eq:wsd}
\text{WSD}_r\big((\hat{\bm{K}}^{(p)},\bm{C}^{-1}),(\hat{\bm{K}}^{(q)},\bm{C}^{-1}) \big) \coloneqq \sqrt{1 - \Big|\Big| \Big( \Psi_r^{(p)}D^{(p)}_r \Big)^\top \bm{C} \Big( \Psi_r^{(q)}D^{(q)}_r \Big) \Big|\Big|^2_F} \, .
\end{equation}

\bibliographystyle{siamplain}
\bibliography{references}

@book{Oksendal2013,
    author = {Oksendal, B.},
    title = {{Stochastic Differential Equations: An Introduction with Applications (6th ed.)}},
    publisher = {Springer},
    year = {2013},
    doi = {10.1007/978-3-662-03185-8}
}

@book{RevuzYor1999,
  author    = {Daniel Revuz and Marc Yor},
  title     = {Continuous Martingales and Brownian Motion},
  edition   = {3rd},
  year      = {1999},
  publisher = {Springer},
  volume    = {293},
  doi       = {10.1007/978-3-662-06400-9}
}

@book{Constantine2020,
author = {Constantine, P.},
title = {{Active Subspaces: Emerging Ideas for Dimension Reduction in Parameter Studies}},
publisher = {Society for Industrial and Applied Mathematics},
year = {2020}
}

@book{Pavliotis2014,
    title        = {Stochastic Processes and Applications},
    author       = {Pavliotis, G.},
    year         = {2014},
    volume       = {60},
    publisher    = {Springer}
}

@article{Hartmann2012,
   author = {Carsten Hartmann and Christof Schütte},
   doi = {10.1088/1742-5468/2012/11/P11004},
   issn = {17425468},
   issue = {11},
   journal = {J. Stat. Mech. Theory Exp.},
   month = {11},
   title = {Efficient rare event simulation by optimal nonequilibrium forcing},
   volume = {2012},
   year = {2012},
}

@article{Nusken2021,
  author    = {N{\"u}sken, Nikolas and Richter, Lorenz},
  title     = {Solving high-dimensional {Hamilton--Jacobi--Bellman} {PDEs} using neural networks: perspectives from the theory of controlled diffusions and measures on path space},
  journal   = {Partial Differential Equations and Applications},
  volume    = {2},
  number    = {4},
  pages     = {48},
  year      = {2021},
  publisher = {Springer},
  doi       = {10.1007/s42985-021-00102-x}
}

@article{Zhang2021,
title = {Error bounds of the invariant statistics in machine learning of ergodic {Itô} diffusions},
journal = {Phys. D},
volume = {427},
pages = {133022},
year = {2021},
issn = {0167-2789},
doi = {https://doi.org/10.1016/j.physd.2021.133022},
author = {He Zhang and John Harlim and Xiantao Li},
}

@article{Zou2026,
  author  = {Joanna Zou and Han Cheng Lie and Youssef Marzouk},
  title   = {Goal-oriented learning of stochastic differential equations using error bounds on path-space observables},
  journal = {arXiv preprint arXiv:2603.20467},
  year    = {2026},
  eprint  = {2603.20467},
  archivePrefix = {arXiv},
}

@book{Lelievre2010,
    author = {Lelièvre, Tony and Rousset, Mathias and Stoltz, Gabriel},
    title = {{Free Energy Computations}},
    publisher = {Imperial College Press},
    year = {2010},
    doi = {10.1142/p579},
}

@article{Zahm2020,
  author  = {Olivier Zahm and Paul G. Constantine and Cl{\'e}mentine Prieur and Youssef M. Marzouk},
  title   = {Gradient-Based Dimension Reduction of Multivariate Vector-Valued Functions},
  journal = {SIAM Journal on Scientific Computing},
  volume  = {42},
  number  = {1},
  pages   = {A534--A558},
  year    = {2020},
  doi     = {10.1137/18M1221837}
}

@article{Maurais2023,
  title = 	 {Multi-Fidelity Covariance Estimation in the Log-{E}uclidean Geometry},
  author =       {Maurais, Aimee and Alsup, Terrence and Peherstorfer, Benjamin and Marzouk, Youssef},
  booktitle = 	 {Proceedings of the 40th International Conference on Machine Learning},
  pages = 	 {24214--24235},
  year = 	 {2023},
  volume = 	 {202},
  series = 	 {Proceedings of Machine Learning Research},
  publisher =    {PMLR},
}

@article{Cui2014,
  author    = {Tiangang Cui and James Martin and Youssef M. Marzouk and Ari Solonen and Alexander Spantini},
  title     = {Likelihood-informed dimension reduction for nonlinear inverse problems},
  journal   = {Inverse Problems},
  volume    = {30},
  number    = {11},
  pages     = {114015},
  year      = {2014},
  doi       = {10.1088/0266-5611/30/11/114015},
  publisher = {IOP Publishing}
}

@article{Lie2021,
archivePrefix = {arXiv},
eprint = {2106.09149},
author = {Lie, H. C.},
journal = {arXiv},
title = {{Fr\'echet derivatives of expected functionals of solutions to stochastic differential equations}},
year = {2021},
}

@article{Stuart2010,
   author = {Stuart, A. M.},
   title = {{Inverse problems: a Bayesian perspective}},
   journal={Acta Numerica},
   volume={19},
   pages={451-559},
   year = {2010},
   doi = {10.1017/S0962492910000061}
}

@book{Bogachev1998,
  author    = {Vladimir Bogachev},
  title     = {Gaussian Measures},
  series    = {Mathematical Surveys and Monographs},
  volume    = {62},
  year      = {1998},
  publisher = {American Mathematical Society},
  address   = {Providence, RI},
  isbn      = {978-0-8218-0772-1}
}

@article{Villa2021,
  author    = {Villa, Umberto and Petra, Noemi and Ghattas, Omar},
  title     = {{hIPPYlib: An Extensible Software Framework for Large-Scale Inverse Problems Governed by PDEs: Part I: Deterministic Inversion and Linearized Bayesian Inference}},
  journal   = {ACM Transactions on Mathematical Software},
  volume    = {47},
  issue    = {2},
  year      = {2021},
  number = {16},
  pages     = {1--34},
  doi       = {10.1145/3428447},
}

@article{Luo2025,
  author  = {Luo, Dingcheng and O'Leary-Roseberry, Thomas and Chen, Peng and Ghattas, Omar},
  title   = {Dimension reduction for derivative-informed operator learning: An analysis of approximation errors},
  year    = {2026},
  volume = 27,
  number = 182,
  pages = {1-94},
  journal = {Journal of Machine Learning Research},
}

@book{Nickl2023,
  author    = {Richard Nickl},
  title     = {Bayesian Non-linear Statistical Inverse Problems},
  series    = {Zurich Lectures in Advanced Mathematics},
  publisher = {EMS Press},
  address   = {Berlin},
  year      = {2023},
  doi       = {10.4171/ZLAM/30},
}

@article{BuiThanh2013,
  author  = {Bui-Thanh, Tan and Ghattas, Omar and Martin, James and Stadler, Georg},
  title   = {{A Computational Framework for Infinite-Dimensional Bayesian Inverse Problems, Part I: The Linearized Case, with Application to Global Seismic Inversion}},
  journal = {SIAM Journal on Scientific Computing},
  volume  = {35},
  number  = {6},
  pages   = {A2494--A2523},
  year    = {2013},
  doi     = {10.1137/12089586X}
}

@article{Batzner2022,
   author = {Simon Batzner and Albert Musaelian and Lixin Sun and Mario Geiger and Jonathan P. Mailoa and Mordechai Kornbluth and Nicola Molinari and Tess E. Smidt and Boris Kozinsky},
   journal = {Nature Communications},
   volume = {13},
   number = {2453},
   title = {E(3)-equivariant graph neural networks for data-efficient and accurate interatomic potentials},
   year = {2022},
   doi = {10.1038/s41467-022-29939-5}
}

@article{Musaelian2023,
   author = {Musaelian, A. and Batzner, S. and Johansson, A. and et al.},
   journal = {Nature Communications},
   volume = {14},
   number = {579},
   title = {Learning local equivariant representations for large-scale atomistic dynamics},
   year = {2023},
   doi = {10.1038/s41467-023-36329-y}
}

@article{Psichogios1992,
  title={A hybrid neural network‐first principles approach to process modeling},
  author={Dimitris C. Psichogios and Lyle H. Ungar},
  journal={Aiche Journal},
  year={1992},
  volume={38},
  pages={1499-1511},
}

@INPROCEEDINGS{Johannink2019,
  author={Johannink, Tobias and Bahl, Shikhar and Nair, Ashvin and Luo, Jianlan and Kumar, Avinash and Loskyll, Matthias and Ojea, Juan Aparicio and Solowjow, Eugen and Levine, Sergey},
  booktitle={2019 International Conference on Robotics and Automation (ICRA)}, 
  title={Residual Reinforcement Learning for Robot Control}, 
  year={2019},
  volume={},
  number={},
  pages={6023-6029},
  doi={10.1109/ICRA.2019.8794127}}

@inproceedings{Song2021,
  title     = {Score-Based Generative Modeling through Stochastic Differential Equations},
  author    = {Song, Yang and Sohl-Dickstein, Jascha and
               Kingma, Diederik P. and Kumar, Abhishek and
               Ermon, Stefano and Poole, Ben},
  booktitle = {International Conference on Learning Representations},
  year      = {2021},
}

@INPROCEEDINGS{Sudret2017,
  author={Sudret, Bruno and Marelli, Stefano and Wiart, Joe},
  booktitle={2017 11th European Conference on Antennas and Propagation (EUCAP)}, 
  title={{Surrogate Models for Uncertainty Quantification: An Overview}}, 
  year={2017},
  pages={793-797},
  doi={10.23919/EuCAP.2017.7928679}}

@article{Hostettler2018,
title = {Modeling the Drift Function in Stochastic Differential Equations using Reduced Rank Gaussian Processes},
journal = {IFAC-PapersOnLine},
volume = {51},
number = {15},
pages = {778-783},
year = {2018},
note = {18th IFAC Symposium on System Identification SYSID 2018},
issn = {2405-8963},
doi = {https://doi.org/10.1016/j.ifacol.2018.09.137},
author = {Roland Hostettler and Filip Tronarp and Simo Särkkä},
}

@misc{Bal2026,
      title={A Recursive Polynomial Chaos Evolution Method for Stochastic Differential Equations}, 
      author={Guillaume Bal and Shengbo Ma and Su Zhang and Zhiwen Zhang},
      year={2026},
      eprint={2605.03853},
      archivePrefix={arXiv},
      primaryClass={math.NA},
      url={https://arxiv.org/abs/2605.03853}, 
}

@inproceedings{Gorodetsky2015,
  author    = {Alex A. Gorodetsky and Sertac Karaman and Youssef M. Marzouk},
  title     = {Efficient High-Dimensional Stochastic Optimal Motion Control using Tensor Trains},
  booktitle = {Robotics: Science and Systems XI},
  year      = {2015},
  address   = {Rome, Italy},
  publisher = {Robotics: Science and Systems Foundation},
  doi       = {10.15607/RSS.2015.XI.015},
  url       = {https://www.roboticsproceedings.org/rss11/p15.pdf}
}

@InProceedings{Richter2021,
  title     = {{Solving high-dimensional parabolic PDEs using the tensor train format}},
  author    = {Richter, Lorenz and Sallandt, Leon and N\"usken, Nikolas},
  booktitle = {Proceedings of the 38th International Conference on Machine Learning},
  pages     = {8998--9009},
  year      = {2021},
  volume    = {139},
  series    = {Proceedings of Machine Learning Research},
  month     = {18--24 Jul},
  publisher = {PMLR},
}

@articleInfo{Huang2025,
title = {An operator learning perspective on parameter-to-observable maps},
journal = {Foundations of Data Science},
volume = {7},
number = {1},
pages = {163-225},
year = {2025},
doi = {10.3934/fods.2024037},
author = {Daniel Zhengyu Huang and Nicholas H. Nelsen and Margaret Trautner},
}

@article{Du2026,
  title={Rare Event Analysis via Stochastic Optimal Control},
  author={Du, Yuanqi and He, Jiajun and Zhang, Dinghuai and Vanden-Eijnden, Eric and Domingo-Enrich, Carles},
  journal={arXiv preprint arXiv:2604.13213},
  year={2026}
}

@phdthesis{Wang2008,
  author       = {Wang, Limin},
  title        = {Karhunen-Lo\`eve Expansions and their Applications},
  school       = {London School of Economics and Political Science},
  year         = {2008},
  address      = {London, UK},
  month        = {March},
  url          = {http://etheses.lse.ac.uk/2950/}
}

@article{Lindgren2011,
  author  = {Lindgren, Finn and Rue, H{\aa}vard and Lindstr{\"o}m, Johan},
  title   = {{An explicit link between Gaussian fields and Gaussian Markov random fields: The stochastic partial differential equation approach}},
  journal = {Journal of the Royal Statistical Society: Series B (Statistical Methodology)},
  volume  = {73},
  number  = {4},
  pages   = {423--498},
  year    = {2011},
  doi     = {10.1111/j.1467-9868.2011.00777.x}
}

@book{BrennerScott2008,
  author    = {Brenner, Susanne C. and Scott, L. Ridgway},
  title     = {The Mathematical Theory of Finite Element Methods},
  edition   = {3},
  publisher = {Springer},
  address   = {New York},
  year      = {2008},
  doi       = {10.1007/978-0-387-75934-0}
}

@article{Alnaes2015,
  author  = {Aln{\ae}s, Martin S. and Logg, Anders and Ølgaard, Kristian B. and Rognes, Marie E. and Wells, Garth N.},
  title   = {{Automated Solution of Differential Equations by the Finite Element Method: The FEniCS Project}},
  journal = {Acta Numerica},
  year    = {2015},
  volume  = {24},
  pages   = {1--123},
  doi     = {10.1017/S0962492915000035}
}

@article{MullerBrown1979,
  author    = {Klaus M{\"u}ller and Leo D. Brown},
  title     = {Location of saddle points and minimum energy paths by a constrained simplex optimization procedure},
  journal   = {Theoretica Chimica Acta},
  year      = {1979},
  volume    = {53},
  number    = {1},
  pages     = {75--93},
  doi       = {10.1007/BF00547608},
}

@article{Mezic2005,
  author  = {Mezi{\'c}, Igor},
  title   = {{Spectral Properties of Dynamical Systems, Model Reduction and Decompositions}},
  journal = {Nonlinear Dynamics},
  volume  = {41},
  pages   = {309--325},
  year    = {2005}
}

@article{Williams2015,
  author  = {Williams, Matthew O. and Kevrekidis, Ioannis G. and Rowley, Clarence W.},
  title   = {{A Data--Driven Approximation of the Koopman Operator: Extending Dynamic Mode Decomposition}},
  journal = {Journal of Nonlinear Science},
  volume  = {25},
  pages   = {1307--1346},
  year    = {2015}
}

@article{Klus2018,
  author  = {Klus, Stefan and N{\"u}ske, Feliks and Koltai, P{\'e}ter and Wu, Hao and Kevrekidis, Ioannis and Sch{\"u}tte, Christof and No{\'e}, Frank},
  title   = {Data-Driven Model Reduction and Transfer Operator Approximation},
  journal = {Journal of Nonlinear Science},
  volume  = {28},
  pages   = {985--1010},
  year    = {2018}
}

@article{Klus2020,
  author  = {Klus, Stefan and N{\"u}ske, Feliks and Peitz, Sebastian and Niemann, Jan-Hendrik and Clementi, Cecilia and Sch{\"u}tte, Christof},
  title   = {{Data-Driven Approximation of the Koopman Generator: Model Reduction, System Identification, and Control}},
  journal = {Physica D: Nonlinear Phenomena},
  volume  = {406},
  pages   = {132416},
  year    = {2020}
}

@article{Peitz2019,
  author  = {Peitz, Sebastian and Klus, Stefan},
  title   = {Koopman Operator-Based Model Reduction for Switched-System Control of PDEs},
  journal = {Automatica},
  volume  = {106},
  pages   = {184--191},
  year    = {2019}
}

\end{document}